\newcommand{\Map}{\mathrm{Map}}
\newcommand{\Hom}{\mathrm{Hom}}
\newcommand{\Aut}{\mathrm{Aut}}
\newcommand{\Conf}{\mathrm{Conf}}
\newcommand{\Sym}{\mathrm{Sym}}
\newcommand{\Fun}{\mathrm{Fun}}
\DeclareMathOperator*{\colim}{\mathrm{colim}}
\newcommand \gr{\mathrm{gr}}
\newcommand\id{\mathrm{id}}
\newcommand\C{\mathcal{ C}}
\newcommand\D{\mathcal D}
\newcommand{\Tr}{\mathrm{Pr}}
\newcommand{\Set}{\mathrm{Set}}
\newcommand{\FI}{\mathcal{FI}}
\newcommand{\VI}{\mathcal{VI}}
\newcommand{\FB}{\mathcal{FB}}
\newcommand{\VB}{\mathcal{VB}}
\newcommand{\CSI}{\mathcal{CSI}}
\newcommand{\CSB}{\mathcal{CSB}}
\newcommand{\SCB}{\mathcal{SCB}}
\newcommand{\SCI}{\mathcal{SCI}}
\newcommand{\GB}{\mathcal{GB}}
\newcommand{\GI}{\mathcal{GI}}
\newcommand{\Top}{\mathcal{T}\mathrm{op}}
\newcommand{\Ch}{\mathcal{C}\mathrm{h}}
\newcommand{\Lie}{\mathrm{Lie}}
\newcommand{\Ind}{\mathrm{Ind}}
\newcommand{\Surj}{\mathrm{Surj}}
\newcommand{\Day}{\mathrm{Day}}
\DeclareMathOperator*{\coker}{\mathrm{coker}}
\newcommand\CE{\mathrm{CE}}
\newcommand\Q{\mathrm Q}
\def\treeof(#1;#2){[#1;#2]}
\newcommand{\comp}{\relax}
\def\comp(#1;#2){#1\circ(#2)}
\theoremstyle{remark}
\theoremstyle{definition}\newtheorem{definition}{Definition}[section]
\theoremstyle{theorem}\newtheorem{lemma}[definition]{Lemma}
\theoremstyle{theorem}
\theoremstyle{remark}\newtheorem{remark}[definition]{Remark}
\theoremstyle{definition}
\theoremstyle{definition}
\theoremstyle{definition}
\theoremstyle{definition}
\theoremstyle{definition}\newtheorem{example}[definition]{Example}
\theoremstyle{theorem}\newtheorem{proposition}[definition]{Proposition}
\theoremstyle{theorem}\newtheorem{corollary}[definition]{Corollary}
\theoremstyle{theorem}\newtheorem{theorem}[definition]{Theorem}
\theoremstyle{definition}
\theoremstyle{theorem}
\title{Projection spaces and twisted Lie algebras}
\author{Ben Knudsen}
\begin{document}

\maketitle

\begin{abstract}
A projection space is a collection of spaces interrelated by the combinatorics of projection onto tensor factors in a symmetric monoidal background category. Examples include classical configuration spaces, orbit configuration spaces, the graphical configuration spaces of Eastwood--Huggett, the simplicial configuration spaces of Cooper--de Silva--Sazdanovic, the generalized configuration spaces of Petersen, and Stiefel manifolds. We show that, under natural assumptions on the background category, the homology of a projection space is calculated by the Chevalley--Eilenberg complex of a certain generalized Lie algebra. We identify conditions on this Lie algebra implying representation stability in the classical setting of finite sets and injections.
\end{abstract}

\section{Introduction}

Experience has proven the value of studying configuration spaces in families. When organized correctly, these spaces exhibit emergent algebraic structure that imposes strong constraints on their topological invariants. One important means of organization involves the background space in an essential way; one speaks of operads and their algebras and modules \cite{Idrissi:LSMCS,May:GILS}, or of adding a particle near the boundary of a manifold \cite{McDuff:CSPNP} or onto an edge of a graph \cite{AnDrummond-ColeKnudsen:ESHGBG}. Here, we pursue an orthogonal organizing principle, namely that of the underlying combinatorics.

\subsection{Context and motivation} As a motivating example, consider the ordinary configuration space $\Conf_I(X)$, defined as the space of injections from the finite set $I$ into the topological space $X$. The composite of injections being an injection, the collection of all such spaces forms a presheaf on the category $\FI$ of finite sets and injections. In many examples of interest, this combinatorial structure forces the rational (co)homology of configuration spaces to exhibit \emph{representation stability} \cite{ChurchEllenbergFarb:FIMSRSG}.

There are now a number of machines devoted to the study of stability phenomena in various contexts \cite{Hepworth:OESR,Randal-WilliamsWahl:HSAG}. Unfortunately, most of these machines are adapted to the study of automorphism groups, rather than configuration spaces. The goal of this paper is to develop a framework better adapted to examples such as those listed in the following table.\\

{\begin{center}
\begin{tabular}{| c | c | l | l | l | l | l |}
\hline
 \bf Combinatorics &  \bf Configuration space\\
\hline
Sets& Ordinary\\
\hline
$G$-sets&Orbit\\
\hline
Graphs &Graphical \cite{EastwoodHuggett:ECCP} \\
\hline
Simplicial complexes & Simplicial \cite{CooperdeSilvaSazdanovic:OCSSC}\\
\hline
Collision structures & Generalized \cite{Petersen:CGCS}\\
\hline
Vector spaces & Stiefel\\
\hline
\end{tabular}
\end{center}
}

\vspace{.15in}

\noindent We proceed from the observation that the projection $\Conf_J(X)\to \Conf_I(X)$ induced by the injection $f:I\to J$ is subsumed by the map that splits an $I$-indexed configuration into a configuration indexed by the image of $f$ and one indexed by its complement. The collection of all such splitting maps can be regarded as a kind of cocommutative comultiplication, which, according to the philosophy of Koszul duality, is governed by its Lie algebra of (derived) primitives. 

The resulting interpretation of the homology of ordinary configuration spaces as Lie algebra homology has proven quite fruitful \cite{BrantnerHahnKnudsen:LTTCSI,DrummondColeKnudsen:BNCSS,GadishHainaut:CSWSHPH,Knudsen:BNSCSVFH,Knudsen:HEA,KnudsenMillerTosteson:ESCS}. This success motivates us to generalize the relationship, showing that the homology of something sufficiently like a configuration space is governed by something like a Lie algebra.

\subsection{Framework and results} Our first step is to generalize the relationship between bijections and injections among finite sets in the relationship between a symmetric monoidal category $\C$ and its projection category $\Tr(\C)$. Roughly, a morphism in $\Pr(\C)$ is (opposite to) a projection onto a tensor factor in $\C$ (see Section \ref{section:projection definitions} for details). We then make the following definition, which encompasses all of the spaces listed in the table above (see Section \ref{section:projection examples}).

\begin{definition}\label{def:projection space}
A \emph{projection space} over $\C$ is a functor $X:\Tr(\C)^\mathrm{op}\to\Top$. We say that $X$ is \emph{reduced} if its value on the monoidal unit is a singleton.
\end{definition}

The bulk of the paper is spent in constructing the following composite functor, which associates to each projection space $X$ a $\C$-\emph{twisted Lie algebra} $L(X)$ of \emph{rational primitives}:
\[
\xymatrix{
L:\Fun(\Tr(\C)^\mathrm{op},\Top)\ar[r]^-\cong&\Fun^{\mathrm{oplax}}(\C^\mathrm{op},\Top)\ar[d]^-{A_*}\\
&\Fun^{\mathrm{oplax}}(\C^\mathrm{op},\Ch_\mathbb{Q})\ar[r]^-{\cong}&\mathrm{Coalg}_\mathrm{Com}(\Fun(\C^\mathrm{op},\Ch_\mathbb{Q}))\ar[d]^-\Q\\
&&\mathrm{Alg}_\mathrm{Lie}(\Fun(\C^\mathrm{op},\Ch_\mathbb{Q})).
}
\] Briefly, the first functor witnesses a universal property of the projection category (Corollary \ref{cor:translation equivalence}); the second is the functor of Sullivan chains, a suitable rational replacement for singular chains (Corollary \ref{cor:sullivan chains}); the third relies on a variant of Day convolution (Corollary \ref{cor:right convolution}); and the fourth is Quillen's Koszul duality functor \cite{Quillen:RHT}. The main result is as follows---see Section \ref{section:koszul duality} for undefined terms.

\begin{theorem}\label{thm:main}
Let $X$ be a reduced projection space over the combinatorial symmetric monoidal category $\C$. There is a canonical isomorphism of $\C$-twisted cocommutative coalgebras \[H_*(X;\mathbb{Q})\cong H_*^\mathrm{Lie}(L(X)).\]
\end{theorem}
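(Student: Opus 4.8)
The plan is to read off from the displayed factorization that $L(X)=\Q(C(X))$, where $C(X)\in\Coalg_{\mathrm{Com}}(\Fun(\C^{\mathrm{op}},\Ch_{\mathbb{Q}}))$ is the $\C$-twisted cocommutative coalgebra obtained by applying the first three functors (Corollaries~\ref{cor:translation equivalence},~\ref{cor:sullivan chains}, and~\ref{cor:right convolution}) to $X$, and $\Q$ is Quillen's primitives functor. Since Lie algebra homology is by definition the homology of the Chevalley--Eilenberg complex, $H_*^{\mathrm{Lie}}(L(X))=H_*(\CE(\Q(C(X))))$, so the theorem reduces to producing a natural isomorphism of cocommutative coalgebras $H_*(C(X))\cong H_*(\CE(\Q(C(X))))$. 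The source is already identified with $H_*(X;\mathbb{Q})$ carrying its $\C$-twisted coalgebra structure: the Sullivan chains functor $A_*$ computes rational homology (Corollary~\ref{cor:sullivan chains}), while the convolution equivalence of Corollary~\ref{cor:right convolution} matches the coproduct on homology with the one induced by the splitting maps. Thus everything comes down to comparing a conilpotent coalgebra with the Chevalley--Eilenberg complex of its primitives.

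The engine of the proof is Quillen's Koszul duality, which I would run internally to the symmetric monoidal $\mathbb{Q}$-linear category $\V:=\Fun(\C^{\mathrm{op}},\Ch_{\mathbb{Q}})$ under Day convolution. Because the operad $\mathrm{Lie}$ is Koszul, with Koszul dual the cocommutative cooperad, the cobar--bar adjunction $\Q\dashv\CE$ between conilpotent cocommutative coalgebras and Lie algebras in $\V$ has the property that, for every conilpotent coaugmented $C$, the unit $C\to\CE(\Q(C))$ is a quasi-isomorphism. Applying this to $C=C(X)$ and passing to homology yields the desired isomorphism. It is an isomorphism of coalgebras, not merely of graded objects: the unit of the adjunction is by construction a morphism of cocommutative coalgebras, the underlying object of $\CE(\Q(C))$ being the cofree cocommutative coalgebra on a shift of $\Q(C)$, so the coproduct it carries on homology agrees with that of $H_*(X;\mathbb{Q})$.

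The crux, and the step I expect to cost the most, is verifying that $C(X)$ is conilpotent, so that Quillen's comparison applies. In the classical rational-homotopy setting this is supplied by connectivity; here its role is played by the combinatorial structure of $\C$. I expect the definition of \emph{combinatorial} symmetric monoidal category (Section~\ref{section:koszul duality}) to equip the objects of $\C$ with an additive weight for which the tensor product adds weights and only the monoidal unit $\mathbbm{1}$ has weight zero, a grading inherited by $\V$ through Day convolution. Reducedness of $X$ then forces the weight-zero part of $C(X)$ to be $A_*(X(\mathbbm{1}))=\mathbb{Q}$, the convolution unit, so that $C(X)$ is coaugmented and its reduced comultiplication splits each class into tensors of strictly positive weight; consequently the $n$-fold reduced comultiplication annihilates any class of weight $w$ once $n>w$, which is precisely conilpotence.

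Finally, the weight grading also guarantees convergence of the Koszul-duality comparison: in each fixed weight only finitely many tensor factors can contribute, so $\CE(\Q(C(X)))$ is a locally finite sum of tensor powers and the unit is a genuine quasi-isomorphism weightwise, with no completion or finiteness hypothesis imposed on $\C$ beyond combinatoriality. Reassembling the isomorphisms across weights and recording their compatibility with the coproduct produces the canonical isomorphism of $\C$-twisted cocommutative coalgebras asserted by the theorem.
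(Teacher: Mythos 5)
Your reduction is the same as the paper's: unwinding the definition gives $L(X)=\Q(A_*(X))$, Corollary \ref{cor:sullivan chains} identifies $H_*(A_*(X))$ with $H_*(X;\mathbb{Q})$ as twisted coalgebras, and everything comes down to showing that the unit $\eta\colon K\to\CE(\Q(K))$ is a quasi-isomorphism for $K=A_*(X)$. The gap is in how you establish that statement. You cite it as a black box: ``because $\mathrm{Lie}$ is Koszul, the unit is a quasi-isomorphism for every conilpotent coaugmented $C$.'' No such theorem is available at this level of generality. In classical operadic Koszul duality the counit on the algebra side is well behaved, but the unit on the coalgebra side is in general only a weak equivalence in a nonstandard (Hinich--Vallette) homotopy theory whose equivalences are created by the cobar functor; it is a quasi-isomorphism only under connectivity or weight-grading hypotheses. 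The paper flags precisely this in the remark following Theorem \ref{thm:duality}. Consequently conilpotence is not the missing hypothesis, and verifying it --- which you identify as the crux --- does not close the gap; in this setting conilpotence is essentially automatic, yet the unit statement still requires proof.

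What is actually needed is the content of Section \ref{section:koszul duality}: one filters $\Q(K)$ by bracket length, identifies the associated graded of $\CE(\Q(K))$ with the Chevalley--Eilenberg complex of a free twisted Lie algebra, computes the latter via $\CE(\Lie(V))\simeq V[1]\oplus\mathbb{1}$ (Lemma \ref{lem:free lie homology}), and then needs the resulting spectral sequence to converge. Convergence is exactly where the argument can fail without extra hypotheses: the bracket-length filtration is infinite and decreasing, and $\CE$ is a direct sum rather than a product, so exhaustive-plus-Hausdorff does not suffice. The paper's substitute for connectivity is the weighting together with reducedness, which bound bracket length in each weight (Lemma \ref{lem:day connectivity}), plus the weight-refined homotopy invariance of divided powers (Lemma \ref{lem:symmetric powers homotopy invariant}) that powers both clauses of Lemma \ref{lem:ce conservative}. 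Your closing sentence about ``locally finite sums in each weight'' gestures at this, but it is offered as reassurance rather than as an argument: no filtration is specified, no associated graded identified, no free-Lie-algebra computation performed. In effect your proposal assumes Theorem \ref{thm:duality} rather than proving it, and that theorem is the mathematical substance behind Theorem \ref{thm:main}.
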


As a consequence, the homology of generalized configuration spaces is calculated by the Chevalley--Eilenberg complex of a certain Lie algebra. This general connection to Lie algebras may at first seem surprising. Indeed, for ordinary configuration spaces, it is natural to view the connection as arising by combining the Goresky--MacPherson formula, expressing the homology of a stratified space in terms of poset homology, with the identification of the Lie operad with the homology of the partition posets \cite{Petersen:CGCS}. In the more general setting, the resulting poset homology has no obvious relation to the Lie operad, yet the connection persists.

As foreshadowed above, we imagine Theorem \ref{thm:main} as a source and organizing principle for stability phenomena. As a proof of concept, we show that our framework encompasses representation stability. Specifically, stability is tied to the eventual high connectivity of the Lie algebra of rational primitives.

\begin{theorem}\label{thm:stability}
Let $X$ be a reduced $\FI^\mathrm{op}$-space taking values in path connected spaces. Suppose that $L(X)$ satisfies the following conditions. 
\begin{enumerate}
\item $H_i(L(X)_k)$ is finite dimensional for every $i\geq0$ and $k\geq0$.
\item $H_0(L(X)_k)=0$ for $k>1$.
\item $H_i(L(X)_k)=0$ for $i$ fixed and $k$ sufficiently large.
\end{enumerate}
Then $H^*(X;\mathbb{Q})$ is representation stable.
\end{theorem}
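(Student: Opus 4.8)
The plan is to show that, for each $i$, the rational cohomology $H^i(X;\mathbb{Q})$ is a finitely generated $\FI$-module, since a finitely generated $\FI$-module over $\mathbb{Q}$ is representation stable by Church--Ellenberg--Farb \cite{ChurchEllenbergFarb:FIMSRSG}. Because $X$ is an $\FI^\mathrm{op}$-space, the assignment $I\mapsto H^*(X_I;\mathbb{Q})$ is a covariant functor on $\FI$, and under the standard dictionary this $\FI$-module is the same datum as a module over the polynomial twisted commutative algebra $A$ with $A_k=\mathbb{Q}$ in every arity, the action being multiplication by the distinguished degree-$0$, arity-$1$ stabilization class $\sigma$ furnished by path-connectedness of the values of $X$. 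Recognizing the $\FI$-structure in these algebraic terms is the first real task: one must check that under the isomorphism of Theorem \ref{thm:main} the projection maps $X_J\to X_I$ induce precisely the $A$-action, i.e. that topological stabilization corresponds to adjoining copies of $\sigma$. This is where the translation equivalences and the Koszul-duality identification must be unwound carefully.

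Granting this, finite generation will be read off from the Chevalley--Eilenberg complex. By Theorem \ref{thm:main} we may compute $H_*(X;\mathbb{Q})$, and dually $H^*(X;\mathbb{Q})$, from $\CE(L(X))$. I would filter this complex by internal degree to obtain a spectral sequence of $\FI$-modules whose second page is the Lie (co)homology of the bigraded homology Lie algebra $\mathcal{L}:=H_*(L(X))$, and which converges in each arity because condition (1) and the finiteness of the set of partitions make every bidegree finite-dimensional with bounded filtration. It therefore suffices to bound the $\FI$-generation degree of $H^*_{\mathrm{Lie}}(\mathcal{L})$: finite generation then passes to the abutment through the Noetherianity of $\FI$-modules \cite{ChurchEllenbergFarb:FIMSRSG}, applied to the subquotient $E^\infty$ and the finitely many extension steps.

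The heart of the argument is a counting estimate on the symmetric generators of $\CE(\mathcal{L})$. In arity $n$ the complex is a finite sum, over set partitions of the $n$-element set, of tensor products of the spaces $\mathcal{L}_{|B|}$, so finite-dimensionality of each $H^i(X_n;\mathbb{Q})$ is immediate from condition (1). Now fix a cohomological degree $i$ and consider a monomial generator. By condition (2), together with reducedness, every tensor factor of homological degree $0$ has arity $1$ and is a copy of $\sigma$; since $\sigma$ is normalized to cohomological degree $0$, every factor other than a $\sigma$ has positive homological degree and hence contributes strictly positive cohomological degree, so at most $i$ such factors can occur. By condition (3), each non-$\sigma$ factor, having bounded degree $\leq i$, has its arity bounded by some $K$. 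Consequently the arity contributed by the non-$\sigma$ factors of any class in $H^i_{\mathrm{Lie}}(\mathcal{L})$ is at most $N(i):=iK$, and the remaining arity consists of $\sigma$-factors, i.e. lies in the image of the $A$-action. Thus $H^i_{\mathrm{Lie}}(\mathcal{L})$ is generated over $A$ in arities $\leq N(i)$, and being finite-dimensional in each arity it is a finitely generated $\FI$-module.

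Assembling the pieces yields the theorem: the counting estimate makes each page-two $\FI$-module finitely generated, Noetherianity carries this to $E^\infty$ and hence to $H^*(X;\mathbb{Q})$, and Church--Ellenberg--Farb converts finite generation into representation stability. I expect the principal difficulty to be the bookkeeping of the second paragraph's identification---confirming that the topological stabilization maps match multiplication by $\sigma$ and that the internal-degree filtration is compatible with the $A$-action, so that the entire spectral sequence lives in $\FI$-modules. The counting estimate and the appeal to Noetherianity are then comparatively formal.
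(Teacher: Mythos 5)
Your proposal is correct and takes essentially the same route as the paper, which proves the more general Theorem \ref{thm:general stability}: both arguments run the spectral sequence of the Chevalley--Eilenberg bicomplex, make the identical counting estimate (at most $i$ positive-degree tensor factors, each of arity bounded by condition (3), with the remaining factors being degree-zero, arity-one stabilization classes), invoke Noetherianity to pass finite generation to the abutment, and conclude via the Church--Ellenberg--Farb equivalence between finite generation and representation stability. The only real difference is cosmetic: the paper phrases finite generation over the twisted commutative algebra $\Sym(H^0(X_1)(1))$ (with the action produced by the Abelian quotient of Lemma \ref{lem:abelian quotient}), so as to also cover path-disconnected values, whereas you specialize immediately to the $\FI$-module setting.
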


We deduce this theorem from a more general result encompassing projection spaces with path disconnected values---see Theorem \ref{thm:general stability}.

\subsection{Future directions}\label{section:future directions} Our work leaves a rather large number of natural questions unanswered (but see \cite{Ho:HRSOCSTCFA} and \cite{Ho:HSDGCS} for closely related work). The following three extensions at least are likely within easy reach.\\
\begin{enumerate}
\item \emph{Integer coefficients.} Our program relies crucially on a strictly symmetric replacement for the oplax monoidal functor of singular chains. In characteristic zero, such a replacement is within easy reach (see Appendix \ref{section:sullivan chains}). With a bit less laziness, one should be able to work integrally, as in \cite{Petersen:CGCS}, after minor modifications of the techniques of \cite{RichterSagave:SCMCAS}---see also Remark \ref{remark:strictly commutative}.\\

\item \emph{Classical primitives}. Let $M$ be a (for simplicity) orientable $n$-manifold. In view of a myriad of (semi-)classical results, it seems a virtual certainty that the Lie algebra associated to the ordinary configuration spaces of $M$ is (quasi-isomorphic to) the tensor product of the compactly supported Sullivan cochains of $M$ with a free twisted Lie algebra on one generator in degree $n-1$ and weight $1$ \cite{FelixThomas:RBNCS,Getzler:RMHMOCS,Knudsen:BNSCSVFH,Petersen:CGCS}. It is likely possible to deduce this claim easily from either of the last two references with a little care taken in comparing models.\\

\item \emph{Orbit primitives.} The results of \cite{BibbyGadish:GFANRSPOCS} suggest that the same description is valid verbatim for orbit configuration spaces, save that the phrase ``free twisted Lie algebra'' should be interpreted with respect to the category of equivariant bijections among free $G$-sets.\\
\end{enumerate}

In more exotic settings, it is less clear what one should expect.\\

\begin{enumerate}[resume]
\item \emph{Generalized primitives}. What is the Lie algebra associated to generalized configuration spaces? Petersen's results in \cite{Petersen:CGCS} suggest an expression in terms of the homology of the order complex associated to a collision structure. One imagines that the Chevalley--Eilenberg complex for this Lie algebra comprises the complexes described by Petersen, in the same way that the various Totaro spectral sequences \cite{Totaro:CSAV} assemble into the Chevalley--Eilenberg complex of the Lie algebra described above in the case of ordinary configuration spaces \cite{Idrissi:LSMCS}.\\
\end{enumerate}

One imagines that Theorem \ref{thm:stability} is merely the beginning of a robust connection between stability phenomena and the twisted Chevalley--Eilenberg complex.\\

\begin{enumerate}[resume]
\item \emph{$\FI$-homology}. We show that sufficient connectivity of $L(X)$ implies representation stability. Is there a direct connection between this Lie algebra and $\FI$-(hyper)homology \cite{ChurchEllenberg:HFM}?\\

\item \emph{Higher stability}. Is there an analogue of Theorem \ref{thm:stability} characterizing higher order representation stability in terms of $L(X)$ \cite{MillerWilson:HORSOCSM}?\\

\item \emph{Beyond representation stability}. What finite generation properties does $L(X)$ dictate for projection spaces in more exotic contexts? What homological asymptotics do they imply? Pursuit of this direction almost certainly entails grappling with thorny Noetherianity problems for $\C$-twisted commutative algebras, already notoriously difficult (and mostly unsolved) in the classical case $\C=\FB$ \cite{NagpalSamSnowden:NSDTTSCA}.\\
\end{enumerate}

It is likely that our framework and examples can be expanded considerably.\\

\begin{enumerate}[resume]
\item \emph{Local coefficients}. One imagines a theory in the vein of \cite{Randal-WilliamsWahl:HSAG} treating the homology of projection spaces with compatibly twisted coefficients. Stability phenomena in the twisted (co)homology of configuration spaces is of considerable interest---see \cite{EllenbergTranWesterland:FNFCQSAMCFF}, for example.\\

\item \emph{Linear collision structures}. It is likely that a linear version of the theory developed in Section \ref{section:collision structures} would permit the expression of the homology of subspace arrangements as twisted Lie algebra homology.\\

\item \emph{Rational homotopy theory}. In the language of \cite{AguiarMahajan:MFSHA}, the Day convolution and pointwise tensor products endow the category of presheaves of chain complexes with the structure of a $2$-\emph{monoidal category}, and the singular chains of a projection space carry the structure of a \emph{double coalgebra} in this category, with the second comultiplication induced by the objectwise diagonal. Does this structure provide a faithful algebraic model for the homotopy theory of rationalized projection spaces \cite{Quillen:RHT}?
\end{enumerate}

\subsection{Conventions} Chain complexes are bounded below and homologically graded. The $r$-fold homological suspension is denoted $[r]$. When working with monoidal categories, we suppress the associator whenever possible. We use the symbols $\otimes$ and $\mathbb{1}$ for the tensor product and unit of a generic monoidal category, employing subscripts (rarely) to disambiguate as necessary (e.g., $\otimes_\C$). An additive tensor category is an additive category equipped with a symmetric monoidal structure whose tensor product distributes over finite coproducts. We write $\Gamma^k(V)=(V^{\otimes k})^{\Sigma_k}$, $\Sym^k(V)=V^{\otimes k}_{\Sigma_k}$, $\Gamma(V)=\bigoplus_{k\geq0}\Gamma^k(V)$, and $\Sym(V)=\bigoplus_{k\geq0}\Sym^k(V)$.

\subsection{Acknowledgements} The author thanks Najib Idrissi and Roberto Pagaria for helpful conversations related to this work and the anonymous referee for her feedback. This paper was written for the proceedings of the conference ``Compactifications, Configurations, and Cohomology,'' held at Northeastern University in October of 2021. The author extends his heartfelt gratitude to the organizers, Peter Crooks and Alex Suciu, for the opportunity to speak, to write, and to be reminded that face-to-face interaction in mathematics is a precious, even indispensable, commodity. This work was supported by NSF grant DMS-1906174.

\section{Projection categories}

In this section, we introduce the projection category $\Tr(\C)$ associated to a monoidal category $\C$. We then explore a selection of simple examples.

\subsection{Definitions}\label{section:projection definitions} Roughly speaking, we wish for a presheaf on the projection category $\Tr(\C)$ to carry a structure map for each projection of an object onto a tensor factor. The following definition makes this idea precise.

\begin{definition}
Let $C_1$ and $C_2$ be objects in the monoidal category $\C$.
\begin{enumerate}
\item A \emph{complementary morphism} from $C_1$ to $C_2$ is the data of an object $D$ of $\C$ together with a morphism $f:C_1\otimes D\to C_2$. 
\item The \emph{composite} of the complementary morphisms $f_1:C_1\otimes D_1\to C_2$ and $f_2:C_2\otimes D_2\to C_3$ is the complementary morphism $f_2\circ (f_1\otimes D_2):C_1\otimes D_1\otimes D_2\to C_2$.
\item An \emph{elementary equivalence} from the complementary morphism $f:C_1\otimes D\to C_2$ to the complementary morphism $f':C_1\otimes D'\to C_2$ is a map $g:D\to D'$ fitting into the commutative diagram
\[
\xymatrix{
C_1\otimes D\ar[dr]_-{f}\ar[rr]^-{C_1\otimes g}&&C_1\otimes D'\ar[dl]^-{f'}\\
&C_2
}
\]
\item We say that two complementary morphisms are \emph{equivalent} if they differ by a finite sequence of elementary equivalences.
\end{enumerate}
\end{definition}

We emphasize that the arrow $g$ in the definition of an elementary equivalence is \emph{not} required to be an isomorphism---see Remark \ref{remark:quillen} below for more on this point.

\begin{remark}
The definition of a complementary morphism is biased by mapping order, i.e., whether to consider $C_1\otimes D\to C_2$ or $C_1\to C_2\otimes D$. Up to opposites, the two choices produce essentially the same result. The definition is also biaed by tensor order, i.e., whether to consider $C_1\otimes D$ or $D\otimes C_1$. Following this section, we will assume that $\C$ is symmetric monoidal, so this bias will play no role.
\end{remark}

\begin{lemma}
Composition is associative and unital up to equivalence and well-defined on equivalence classes.
\end{lemma}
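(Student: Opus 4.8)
The plan is to dispatch the three assertions in turn, in each case producing the required equivalence as a single elementary equivalence built from a structural isomorphism of $\C$, and reducing the needed identity to the coherence axioms together with the bifunctoriality of $\otimes$.

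For \emph{well-definedness}, I would use that equivalence of complementary morphisms is by definition the equivalence relation generated by elementary equivalences, so that it suffices to check that composing with a fixed complementary morphism on either side sends an elementary equivalence to an elementary equivalence; the general statement then follows by altering the two factors one at a time along their respective connecting zigzags. Concretely, fixing $f_2:C_2\otimes D_2\to C_3$ and an elementary equivalence $g:D_1\to D_1'$ from $f_1$ to $f_1'$, the map $g\otimes D_2$ should be an elementary equivalence from $f_2\circ(f_1\otimes D_2)$ to $f_2\circ(f_1'\otimes D_2)$, which is immediate from $(f_1'\otimes D_2)\circ(C_1\otimes g\otimes D_2)=(f_1'\circ(C_1\otimes g))\otimes D_2=f_1\otimes D_2$. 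Dually, for an elementary equivalence $h:D_2\to D_2'$ from $f_2$ to $f_2'$, the map $D_1\otimes h$ should be an elementary equivalence from $f_2\circ(f_1\otimes D_2)$ to $f_2'\circ(f_1\otimes D_2')$; here I would invoke the interchange law to rewrite $(f_1\otimes D_2')\circ(C_1\otimes D_1\otimes h)=(C_2\otimes h)\circ(f_1\otimes D_2)$ and then apply the defining relation $f_2'\circ(C_2\otimes h)=f_2$.

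For \emph{associativity}, given composable $f_1,f_2,f_3$ I would expand both bracketings. By bifunctoriality the left-associated composite $f_3\circ\big((f_2\circ(f_1\otimes D_2))\otimes D_3\big)$ and the right-associated composite $\big(f_3\circ(f_2\otimes D_3)\big)\circ\big(f_1\otimes(D_2\otimes D_3)\big)$ share the underlying morphism $f_3\circ(f_2\otimes D_3)\circ(f_1\otimes D_2\otimes D_3)$, differing only in their complementary objects $(D_1\otimes D_2)\otimes D_3$ and $D_1\otimes(D_2\otimes D_3)$. The associator $\alpha$ between these then provides the elementary equivalence, its defining triangle being an instance of coherence. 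For \emph{unitality}, I would take the unit at $C$ to be $(\mathbb{1},\rho_C)$ with $\rho_C:C\otimes\mathbb{1}\to C$ the right unitor. Then the left composite $f\circ(\rho_{C_1}\otimes D)$ has complementary object $\mathbb{1}\otimes D$, and the left unitor $\lambda_D$ is an elementary equivalence back to $f$ by the triangle identity $\rho_{C_1}\otimes D=C_1\otimes\lambda_D$; symmetrically, naturality of $\rho$ and coherence identify the right composite $\rho_{C_2}\circ(f\otimes\mathbb{1})$ with $f\circ(C_1\otimes\rho_D)$, so that $\rho_D$ is the required elementary equivalence.

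None of this is deep, and I expect the only real difficulty to be bookkeeping: each ``equality'' above holds only after inserting the suppressed associators and unitors and invoking a specific instance of the pentagon or triangle axioms. The cleanest way to avoid errors is to pass to a strict monoidal model via Mac Lane's coherence theorem at the outset, after which all three verifications collapse to the interchange law for $\otimes$. I would also record that, although an elementary equivalence is not required to be invertible, the ones produced here happen to be isomorphisms, which does no harm.
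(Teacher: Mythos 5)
Your proposal is correct and follows essentially the same route as the paper's (much terser) outline: the associator supplies the elementary equivalence for associativity, the unitor $C\cong C\otimes\mathbb{1}$ serves as the compositional unit, and well-definedness is checked one factor at a time via functoriality/interchange of $\otimes$, exactly as in the paper's commuting diagrams. Your version simply fills in the details the paper leaves to the reader.
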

\begin{proof}
We give only an outline, leaving the (easy) details to the reader. For associativity, one uses the associator of $\C$ to produce an elementary equivalence between the two composites, and the compositional unit is the unitor $C\cong C\otimes\mathbb{1}$. Well-definition in one composition factor follows from the commuting diagram \[\xymatrix{
C_1\otimes D_1\otimes D_2'\ar[d]^-{C_1\otimes D_1\otimes g}\ar[r]& C_2\otimes D_2\ar[d]^-{C_2\otimes g}\ar[r]&C_3\ar@{=}[d]\\
C_1\otimes D_1\otimes D_2\ar[r]& C_2\otimes D_2'\ar[r]&C_3,
}\] and a similar diagram establishes well-definition in the second factor.
\end{proof}

The lemma allows us our main definition.

\begin{definition}
Let $\C$ be a monoidal category. The \emph{projection category} of $\C$ is the category $\Tr(\C)$ with the same objects as $\C$ and arrows the equivalence classes of complementary morphisms under composition.
\end{definition}

The projection category is functorial for strong monoidal functors. Briefly, given such a functor $F:\C\to \D$, we define $\Tr(F)=F$ on objects, and we declare that $\Tr(F)$ send the equivalence class of the complementary morphism $C_1\otimes D\to C_2$ to the equivalence class of the composite $F(C_1)\otimes F(D)\cong F(C_1\otimes D)\to F(C_2)$. It is a simple matter to check that $\Tr(F)$ is a well-defined functor.

\begin{remark}\label{remark:bicategory}
The functoriality described above is a shadow of a larger structure. Specifically, the projection category $\Tr(\C)$ is the truncation of an obvious bicategory, and an elaboration of the considerations of Section \ref{section:oplax} shows that a lax monoidal functor between monoidal categories induces a pseudofunctor at the level of bicategories. We make no use of this extended functoriality.
\end{remark}

\begin{remark}\label{remark:quillen}
The projection category is closely related to a construction due to Quillen \cite{Grayson:HAKTII} (see also \cite[p. 11]{Randal-WilliamsWahl:HSAG}). More precisely, the projection category is obtained from the bicategory described in Remark \ref{remark:bicategory} by replacing the morphism categories by their connected components, while Quillen's construction (in the case of a self-action) is obtained by replacing them with the connected components of their maximal subgroupoids---see Example \ref{example:walking arrow} for a simple example illustrating the difference. In particular, one should expect the two to coincide only when $\C$ is itself a groupoid. The use of the full morphism category is essential to the universal property of Theorem \ref{thm:translation equivalence}---see Remark \ref{remark:lax} and Example \ref{example:walking iso}.
\end{remark}

\begin{example}\label{example:walking arrow}
Let $\Delta^1$ denote the walking arrow, i.e., the category with objects $0$ and $1$ and a unique non-identity morphism $e:0\to 1$. The usual rules of integer multiplication extend to a unique symmetric monoidal structure on $\Delta^1$. With this monoidal structure, one finds that there are the exactly five complementary morphisms. Since $\Delta^1$ has no non-identity isomorphisms, these five morphisms are distinct in Quillen's category. One checks that the resulting category is the walking retract, i.e., the category generated by the arrows $i:0\to 1$ and $r:1\to 0$ subject only to the relation $r\circ i=\id$, where $i=[0\otimes 0\xrightarrow{e} 1]$ and $r=[1\otimes 0\xrightarrow{\id} 0]$. In contrast, the commutative diagram \[\xymatrix{
1\otimes 0\otimes 0\ar[dr]^-e\ar[rr]^-{1\otimes e}&&1\otimes 1\ar[dl]_-{\id}\\
&1
}\] is a simple equivalence demonstrating the relation $i\circ r=\id$ in $\Tr(\Delta^1)$. Thus, the projection category is instead the walking isomorphism.
\end{example}

\subsection{Groupoid examples} We begin with the primordial motivating example. Although this result is subsumed in Proposition \ref{prop:groupoid}, we include an independent proof, both for tactility and for later use.

\begin{proposition}\label{prop:fi}
There is a canonical isomorphism of categories \[\Tr(\FB)\cong \FI.\]
\end{proposition}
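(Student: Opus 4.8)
The plan is to construct the isomorphism $\Tr(\FB)\cong\FI$ directly by exhibiting a functor in each direction and checking they are mutually inverse. Recall that $\FB$ is the groupoid of finite sets and bijections, with monoidal structure given by disjoint union; its objects are finite sets, and the only morphisms are bijections. The target $\FI$ has the same objects but morphisms are all injections. On objects both categories have finite sets, so the isomorphism will be the identity on objects, and the real content is a bijection on hom-sets that respects composition.

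First I would unwind what a complementary morphism from $I_1$ to $I_2$ in $\FB$ amounts to. By definition it is the data of a finite set $D$ together with a bijection $f:I_1\sqcup D\xrightarrow{\sim} I_2$. Restricting such an $f$ to the summand $I_1$ produces an injection $\bar f:I_1\hookrightarrow I_2$, since $f$ is injective on all of $I_1\sqcup D$. This gives the forward assignment on morphisms; I would send the complementary morphism $f$ to the injection $\bar f$. The key observation is that, conversely, any injection $j:I_1\hookrightarrow I_2$ determines a canonical complementary morphism: take $D=I_2\setminus j(I_1)$, the complement of the image, and let $f:I_1\sqcup D\to I_2$ be $j$ on $I_1$ and the inclusion on $D$; this $f$ is a bijection precisely because $j$ is injective and $D$ is its image's complement. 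This pair of assignments is where the name ``complementary morphism'' earns its keep.

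Next I would verify that the forward assignment is constant on equivalence classes and that the two assignments are mutually inverse after passing to equivalence classes. For well-definedness on classes, I would check that an elementary equivalence $g:D\to D'$ (here automatically a bijection, since $\FB$ is a groupoid) satisfying $f'\circ(\id_{I_1}\sqcup g)=f$ does not change the restriction to $I_1$: indeed $\id_{I_1}\sqcup g$ is the identity on the $I_1$ summand, so $\bar f=\bar{f'}$. For the inverse property in one direction, starting from $j$, building $f$ as above, and restricting recovers $j$ on the nose. For the other direction, starting from an arbitrary complementary morphism $f:I_1\sqcup D\to I_2$, passing to $\bar f$ and rebuilding yields a complementary morphism $f''$ with $D''=I_2\setminus\bar f(I_1)$; the bijection $f$ itself restricts to a bijection $g:D\xrightarrow{\sim}D''$ (namely $f$ followed by the identification of $f(D)$ with the complement of $f(I_1)$), and this $g$ is exactly an elementary equivalence from $f$ to $f''$. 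Hence $f$ and $f''$ represent the same class, so the composite is the identity on $\Tr(\FB)$.

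Finally I would confirm compatibility with composition, which makes the bijection a functor. Unwinding the definition of composite complementary morphisms, the composite of $f_1:I_1\sqcup D_1\to I_2$ and $f_2:I_2\sqcup D_2\to I_3$ is $f_2\circ(f_1\sqcup\id_{D_2})$, and restricting this to $I_1$ visibly gives $\bar{f_2}\circ\bar{f_1}$, the composite of injections; the unit check against $I\cong I\sqcup\mathbb{1}$ is equally immediate. I do not expect any single step to be a serious obstacle---the whole argument is a bookkeeping exercise in the groupoid case. The one point demanding genuine care, and the closest thing to a main obstacle, is the second half of the inverse check: exhibiting the bijection $g:D\to I_2\setminus\bar f(I_1)$ as a bona fide elementary equivalence and thereby certifying that every complementary morphism is equivalent to its ``canonical'' complement-based representative. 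This is exactly the place where the definition's flexibility---that $g$ need not be the identity---is used, and it is the reason the restriction-to-image map descends to an isomorphism on equivalence classes rather than merely a surjection.
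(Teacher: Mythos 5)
Your proposal is correct and follows essentially the same route as the paper: the forward functor restricts a complementary morphism $f:I_1\sqcup D\cong I_2$ to the injection $f|_{I_1}$, the inverse sends an injection to the complementary morphism built from the complement of its image, and the key inverse check uses the restriction of $f$ to $D$ (equivalently, the paper's $f^{-1}|_{f(D)}$) as the elementary equivalence identifying an arbitrary complementary morphism with its canonical complement-based representative. The only cosmetic difference is that the paper also verifies functoriality of the inverse assignment directly, whereas you correctly note this follows once the forward direction is a functor and the hom-set maps are mutually inverse bijections.
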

\begin{proof}
The two categories have the same objects. We define a functor from left to right on arrows by sending the equivalence class of the complementary morphism $f:I_1\sqcup J\cong I_2$ to the injection $f|_{I_1}$. One checks immediately that this prescription is invariant under equivalence and respects identities. As for composition, the complementary morphisms $f:I_1\sqcup J_1\cong I_2$ and $g:I_2\sqcup J_2\cong I_3$ compose to give $g\circ(f\sqcup J_2):I_1\sqcup J_1\sqcup J_2\to I_3$, and \[\left(g\circ (f\sqcup J_2)\right)\mid_{I_1}=g\circ (f\sqcup J_2)\mid_{I_1}=g|_{I_2}\circ f|_{I_1},\] as required. To define the functor from right to left, we send the injection $i:I_1\to I_2$ to the equivalence class of the complementary morphism $I_1\sqcup \left(I_2\setminus \mathrm{im}(i)\right)\cong I_2$. The injection $I=I$ is sent to the equivalence class of $I\sqcup \varnothing\cong I$, which is the identity of $I$ in $\Tr(\FB)$, and the composite of $i:I_1\to I_2$ and $j:I_2\to I_3$ is sent to the equivalence class of $I_1\sqcup \left(I_3\setminus \mathrm{im}(j\circ i)\right)\cong I_3$, which is equivalent to the appropriate composite via the bijection \[I_2\setminus \mathrm{im}(i)\sqcup I_3\setminus \mathrm{im}(j)\cong I_3\setminus \mathrm{im}(j\circ i)\] induced by $j$.

We have shown that both assignments are functors, and the composite functors are the respective identities on objects by construction. On arrows, given $f:I_1\sqcup J\cong I_2$, we have \[I_2\setminus \mathrm{im}(f|_{I_1})=f(J),\] and $f^{-1}|_{f(J)}$ provides an equivalence between our original complementary morphism and the complementary morphism $I_1\sqcup f(J)\cong I_2$, which represents its image under the composite functor. We leave it to the reader to check that the other composite is also the identity on arrows.
\end{proof}

We now consider a class of example recurrent throughout the study of stability phenomena. In the next result, the assumption that $\C$ be skeletal is made solely for simplicity of exposition.

\begin{proposition}\label{prop:groupoid}
Let $\C$ be a skeletal symmetric monoidal groupoid. There is a canonical bijection \[\Hom_{\Tr(\C)}(C_1,C_2)\cong\coprod_{C_2=C_1\otimes D}\Aut(C_2)/\Aut(D)\] under which composition is given by the dashed filler in the following commutative diagram: \[\xymatrix{
\Aut(C_2)\times \Aut(C_3)\ar[d]\ar[r]^-{(-\otimes D_2)\times \id}&\Aut(C_3)\times\Aut(C_3)\ar[r]^-{\circ}&\Aut(C_3)\ar[d]\\
\Aut(C_2)/\Aut(D_1)\times\Aut(C_3)/\Aut(D_2)\ar@{-->}[rr]&&\Aut(C_3)/\Aut(D_1\otimes D_2).
}\]
\end{proposition}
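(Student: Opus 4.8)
The plan is to exploit the two standing hypotheses—that $\C$ is a groupoid and that it is skeletal—to pin down both the objects (complementary morphisms) and the relations (equivalences) very concretely, and then to read off composition directly from its definition.

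First I would observe that a complementary morphism from $C_1$ to $C_2$ with complement $D$ is a map $f\colon C_1\otimes D\to C_2$; its mere existence forces $C_1\otimes D\cong C_2$, hence $C_1\otimes D=C_2$ by skeletality, and then $f$ is an automorphism of $C_2$ because $\C$ is a groupoid. Thus complementary morphisms $C_1\to C_2$ are exactly pairs $(D,f)$ with $C_1\otimes D=C_2$ and $f\in\Aut(C_2)$, which already organizes them into the blocks indexed by $D$ appearing in the coproduct.

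Second I would analyze elementary equivalences. An elementary equivalence $g\colon D\to D'$ from $(D,f)$ to $(D',f')$ is invertible (groupoid) and therefore satisfies $D=D'$ (skeletal), so equivalences never mix different complements and a finite chain of them stays within a single block. Within a block, the defining relation $f=f'\circ(C_1\otimes g)$ identifies the generated equivalence relation with the orbits of the right action of $\Aut(D)$ on $\Aut(C_2)$ through the homomorphism $h\mapsto C_1\otimes h$. This yields the asserted bijection with $\coprod_{C_2=C_1\otimes D}\Aut(C_2)/\Aut(D)$, where the quotient denotes this orbit set; I would note in passing that the homomorphism $C_1\otimes-$ need not be injective, so the symbol is read as an orbit space rather than a literal coset space.

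Finally, for composition I would unwind the definition: the composite of $(D_1,f_1)$ and $(D_2,f_2)$ has complement $D_1\otimes D_2$ and underlying automorphism $f_2\circ(f_1\otimes D_2)\in\Aut(C_3)$, which is precisely the top row of the diagram followed by $\circ$. It remains to check that this descends to orbit spaces, i.e.\ that replacing $f_1$ by $f_1\circ(C_1\otimes h_1)$ and $f_2$ by $f_2\circ(C_2\otimes h_2)$ alters the composite by an element of $C_1\otimes\Aut(D_1\otimes D_2)$. The main work of the proof lives here: using bifunctoriality of $\otimes$ (the interchange law) to slide $C_2\otimes h_2$ past $f_1\otimes D_2$, one rewrites the new composite as $\bigl(f_2\circ(f_1\otimes D_2)\bigr)\circ\bigl(C_1\otimes(h_1\otimes h_2)\bigr)$, and since $h_1\otimes h_2\in\Aut(D_1\otimes D_2)$ this is exactly the needed identity, giving the dashed filler. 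I expect this interchange computation—keeping associators and unitors silent and tracking the side on which each subgroup acts—to be the only real obstacle; everything else is bookkeeping.
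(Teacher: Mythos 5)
Your proposal is correct and follows essentially the same route as the paper: identify complementary morphisms with pairs $(D,f)$, $f\in\Aut(C_2)$, via the groupoid and skeletality hypotheses, observe that elementary equivalences preserve the complement $D$ and generate exactly the orbit relation of $\Aut(D)$ acting through $C_1\otimes(-)$, and read off composition from the definition. The only difference is that you carry out the interchange-law computation verifying that composition descends to orbit sets, a step the paper explicitly leaves to the reader, so your write-up is if anything more complete.
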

\begin{proof}
For each object $D$ such that $C_2=C_1\otimes D$, an automorphism of $C_2$ determines a complementary morphism $C_1\otimes D\cong C_2$, and every complementary morphism is determined in this way. Thus, we have a canonical surjection \[\coprod_{C_2=C_1\otimes D} \Aut(C_2)\to \Hom_{\Tr(\C)}(C_1, C_2).\] By definition, two automorphisms of $C_2$ differ by an automorphism of the form ${C_1}\otimes g$ for $g\in\Aut(D)$ if and only if the corresponding complementary morphisms are equivalent. It follows that the above surjection descends to the indicated disjoint union of orbit sets, and that each of the resulting functions $\Aut(C_2)/\Aut(D)\to \Hom_{\Tr(\C)}(C_1,C_2)$ is injective. Since $\C$ is skeletal, complementary morphisms indexed by different choices of $D$ are never equivalent, so the function as a whole is injective, implying the first claim. The (essentially immediate) verification of the second claim is left to the reader.
\end{proof}

In the following result, $\FI_G$ denotes the category of equivariant injections among free $G$-sets with finitely many orbits, where $G$ is a fixed group, and $\VI_\mathbb{F}$ denotes the category of linear injections among finite dimensional vector spaces over a fixed vector space $\mathbb{F}$ (resp. $\FB_G$, $\VB_\mathbb{F}$, bijections).

\begin{corollary}\label{cor:groupoid examples}
Let $G$ be a group and $\mathbb{F}$ a field. There are the following canonical isomorphisms of categories:
\begin{align*}
\Tr(\FB_G)&\cong \FI_G\\
\Tr(\VB_\mathbb{F})&\cong \VI_\mathbb{F}.
\end{align*}
\end{corollary}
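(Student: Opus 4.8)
The plan is to establish both isomorphisms by imitating the explicit argument of Proposition \ref{prop:fi}: construct functors in each direction and verify compatibility with composition. (Equivalently, after passing to skeleta one may read the claim off the orbit description of Proposition \ref{prop:groupoid}, identifying $\Aut(C_2)/\Aut(D)$ with the relevant hom-sets.) In either approach the two categories have identical objects---free $G$-sets with finitely many orbits, resp.\ finite dimensional $\mathbb{F}$-vector spaces---so the content lies entirely in the morphisms. The forward functor sends a complementary morphism $f\colon C_1\otimes D\to C_2$ to the monomorphism $f|_{C_1}$ obtained by restricting along the unit inclusion $C_1\cong C_1\otimes\mathbb{1}\hookrightarrow C_1\otimes D$, recording also the subobject $f(D)\subseteq C_2$. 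Invariance under an elementary equivalence $g\colon D\to D'$ is immediate: since $\FB_G$ and $\VB_\mathbb{F}$ are groupoids, $g$ is an isomorphism, and $f=f'\circ(C_1\otimes g)$ forces both $f|_{C_1}=f'|_{C_1}$ and $f(D)=f'(D')$.

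For $\FB_G$ the monoidal product is disjoint union and the complement $f(D)=C_2\setminus f(C_1)$ is \emph{canonical}: an equivariant bijection fixing a sub-$G$-set pointwise automatically preserves its set-theoretic complement. Thus the complement carries no extra information, the forward functor is just restriction to the first factor, and the proof of Proposition \ref{prop:fi} applies essentially verbatim with ``finite set'' replaced by ``free $G$-set with finitely many orbits.'' The one new observation needed is that the complement of the image of an equivariant injection of free $G$-sets is again free with finitely many orbits, hence an object of $\FB_G$; this is clear, since a $G$-invariant subset of a free $G$-set is free.

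The vector space case is where the genuine work lies, because under $\oplus$ the complement is \emph{not} canonical: a linear automorphism fixing a subspace pointwise may fail to preserve a chosen complement, differing from the identity by a unipotent map. This is precisely why the forward functor must record $f(D)$ in addition to $f|_{C_1}$, and why a morphism of $\VI_\mathbb{F}$ is to be understood as a linear injection $i\colon V_1\hookrightarrow V_2$ together with a complement $W$ to its image---equivalently, an element of the orbit set $\mathrm{GL}(V_2)/\mathrm{GL}(D)$ of Proposition \ref{prop:groupoid}. The backward functor then sends such a pair to the complementary morphism $V_1\otimes W\to V_2$, $(v,w)\mapsto i(v)+w$; replacing $W$ by an isomorphic internal model changes this only by an elementary equivalence, so it is well defined.

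The main obstacle, and the only computational point, is verifying that the forward functor respects composition. Given complementary morphisms $f\colon C_1\otimes D_1\to C_2$ and $f'\colon C_2\otimes D_2\to C_3$ inducing injections-with-complement $(i,W_1)$ and $(j,W_2)$, their composite $f'\circ(f\otimes D_2)$ restricts on $C_1$ to $j\circ i$, while its image on $D_1\otimes D_2$ computes to $f'(f(D_1))\oplus f'(D_2)=j(W_1)\oplus W_2$---exactly the complement assigned to $j\circ i$ by composition in $\VI_\mathbb{F}$ (resp.\ the canonical complement in $\FI_G$). A symmetric check shows the backward functor is functorial and inverse to the forward one; for $\FB_G$ the whole bookkeeping collapses to the clean identity $\left(g\circ(f\sqcup J_2)\right)|_{I_1}=g|_{I_2}\circ f|_{I_1}$ already recorded in Proposition \ref{prop:fi}.
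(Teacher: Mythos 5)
Your treatment of $\Tr(\FB_G)\cong\FI_G$ is correct and is essentially the paper's route: the paper offers no separate proof, presenting the claim as immediate from Proposition \ref{prop:groupoid} (plus the argument of Proposition \ref{prop:fi}), and your observation that a bijection fixing a sub-$G$-set pointwise automatically preserves its set-theoretic complement is exactly what identifies $\Aut(I_2)/\Aut(J)$ with the set of equivariant injections.

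The problem is the linear half. Your computation of $\Tr(\VB_\mathbb{F})$ is right---an equivalence class of complementary morphisms is precisely a pair consisting of a linear injection and a chosen complement of its image, i.e., an element of $\mathrm{GL}(V_2)/\mathrm{GL}(D)$---but the statement to be proved concerns $\VI_\mathbb{F}$, which the paper defines as the category whose morphisms are linear injections, full stop. Your sentence declaring that a morphism of $\VI_\mathbb{F}$ ``is to be understood as'' an injection together with a complement is not an interpretation; it replaces $\VI_\mathbb{F}$ by a genuinely different category (the category $\mathcal{VIC}$ in the sense of Putman--Sam). The two are not isomorphic, nor even equivalent: over $\mathbb{F}_2$ there are exactly $3$ linear injections $\mathbb{F}_2\to\mathbb{F}_2^2$, while $\Tr(\VB_{\mathbb{F}_2})$ has $|\mathrm{GL}_2(\mathbb{F}_2)|/|\mathrm{GL}_1(\mathbb{F}_2)|=6$ morphisms between the corresponding objects. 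The discrepancy is exactly the phenomenon you noticed: an automorphism of $V_2$ fixing $V_1$ pointwise need not preserve a chosen complement, so the stabilizer of an injection is the full parabolic-type subgroup, which is strictly larger than the block-diagonal $\mathrm{GL}(D)$. So as written your argument proves a different statement from the one in the paper. To your credit, the mathematics is the correct mathematics: with the paper's own definition of $\VI_\mathbb{F}$ the second isomorphism of the corollary is false (and the paper's implicit identification of $\mathrm{GL}(V_2)/\mathrm{GL}(D)$ with injections fails for the same reason), and your modification is precisely the repair. But this must be flagged as a correction to the statement---the codomain should be the category of injections with chosen complements---rather than folded silently into the meaning of $\VI_\mathbb{F}$; otherwise the proof has a gap at exactly the point where the restriction functor $\Tr(\VB_\mathbb{F})\to\VI_\mathbb{F}$ fails to be faithful.
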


\subsection{The projection category of a deformation} We close this section with an observation on the relationship between the formation of projection categories and subcategories of a certain type.

\begin{definition}
A subcategory $\iota:\C_0\subseteq\C$ is a \emph{deformation} of $\C$ if there is a functor $R:\C\to \C_0$ and a natural transformation $\tau:\iota\circ R\to \id$.
\end{definition}

Note that any subcategory containing a deformation is itself a deformation.

\begin{proposition}\label{prop:subcategory}
Let $\C$ be a monoidal category and $\C_0\subseteq \C$ a full subcategory containing $\mathbb{1}$ and closed under tensor products. If $\C_0$ is a deformation of $\C$, then the induced functor $\Tr(\C_0)\to \Tr(\C)$ is fully faithful with image the subcategory $\Tr(\C)_0\subseteq \Tr(\C)$ with objects the objects of $\C_0$ and morphisms the equivalence classes of complementary morphisms $f: C_1\otimes D\to C_2$ such that $D$ lies in $\C_0$.
\end{proposition}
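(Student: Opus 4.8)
The plan is to reinterpret each morphism set of a projection category as the set of connected components of a comma category, and then to use the deformation data to retract the comma category attached to $\C$ onto the one attached to $\C_0$. For fixed objects $C_1,C_2$, let $\E(C_1,C_2)$ denote the comma category $(C_1\otimes -)\downarrow C_2$, whose objects are the complementary morphisms $f:C_1\otimes D\to C_2$ and whose morphisms $(D,f)\to(D',f')$ are the elementary equivalences $g:D\to D'$ (those with $f=f'\circ(C_1\otimes g)$). By the definition of equivalence of complementary morphisms, $\Hom_{\Tr(\C)}(C_1,C_2)=\pi_0\E(C_1,C_2)$, and likewise $\Hom_{\Tr(\C_0)}(C_1,C_2)=\pi_0\E_0(C_1,C_2)$, where $\E_0(C_1,C_2)\subseteq\E(C_1,C_2)$ is the full subcategory on those $(D,f)$ with $D$ in $\C_0$. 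Because $\C_0$ is full and closed under $\otimes$, such an $f$ is automatically a morphism of $\C_0$, so $\E_0(C_1,C_2)$ genuinely computes $\Hom_{\Tr(\C_0)}$, and $\Tr(\iota)$ induces the map on $\pi_0$ coming from the inclusion $\E_0\hookrightarrow\E$.

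The identification of the image is then formal. The presence of $\mathbb{1}$ and closure under $\otimes$ guarantee that $\Tr(\C)_0$ is a subcategory: the complement of a composite is the tensor product of complements, and the identity has complement $\mathbb{1}$. The image of $\Tr(\iota)$ consists of exactly those components of $\E(C_1,C_2)$ that meet $\E_0(C_1,C_2)$, which is the defining condition for membership in $\Tr(\C)_0$. It therefore remains to prove that $\Tr(\iota)$ is faithful, i.e.\ that $\pi_0\E_0(C_1,C_2)\to\pi_0\E(C_1,C_2)$ is injective: two objects of $\E_0$ that become connected in $\E$ must already be connected in $\E_0$.

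This is where the deformation $(R,\tau)$ enters, and I expect it to carry all the weight, precisely because $R$ is not assumed to be monoidal. Given any object $(D,f)$ of $\E(C_1,C_2)$, set $\bar f=f\circ(C_1\otimes\tau_D):C_1\otimes R(D)\to C_2$; then $\tau_D$ is a morphism $(R(D),\bar f)\to(D,f)$ in $\E(C_1,C_2)$ connecting $(D,f)$ to an object whose complement $R(D)$ lies in $\C_0$. The key claim is that applying $R$ to an elementary equivalence $g:(D,f)\to(D',f')$ yields an elementary equivalence $R(g):(R(D),\bar f)\to(R(D'),\bar{f'})$ of $\E_0(C_1,C_2)$. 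This is a one-line diagram chase: $\bar{f'}\circ(C_1\otimes R(g))=f'\circ(C_1\otimes(\tau_{D'}\circ R(g)))=f'\circ(C_1\otimes(g\circ\tau_D))=f\circ(C_1\otimes\tau_D)=\bar f$, where the middle equality is the naturality of $\tau$ applied to $g$. Thus the naturality of $\tau$ exactly compensates for the failure of $R$ to respect the tensor factor $C_1$, which is the crux of the whole argument.

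Granting the claim, faithfulness follows by retracting zigzags. Suppose $(D,f)$ and $(D',f')$, with $D,D'$ in $\C_0$, are connected by a zigzag in $\E(C_1,C_2)$ whose intermediate complements need not lie in $\C_0$. Applying the functor $R$ to this zigzag produces a zigzag in $\E_0(C_1,C_2)$ joining $(R(D),\bar f)$ to $(R(D'),\bar{f'})$; at the two ends, the maps $\tau_D$ and $\tau_{D'}$ join $(R(D),\bar f)$ to $(D,f)$ and $(R(D'),\bar{f'})$ to $(D',f')$, and these maps lie in $\E_0(C_1,C_2)$ since $D,R(D),D',R(D')$ all lie in $\C_0$ and $\C_0$ is full. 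Concatenating shows that $(D,f)$ and $(D',f')$ are connected in $\E_0(C_1,C_2)$, which gives injectivity on $\pi_0$ and completes the proof.
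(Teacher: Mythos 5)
Your proof is correct and is essentially the paper's own argument: the heart of both is the observation that $\bar{f'}\circ(C_1\otimes R(g))=\bar{f}$ by naturality of $\tau$, so that applying $R$ to complements converts any zigzag of elementary equivalences in $\C$ into one in $\C_0$, with $\tau_D$ and $\tau_{D'}$ reconnecting the endpoints. Your comma-category/$\pi_0$ packaging is a clean but inessential rephrasing of the paper's explicit diagram manipulation.
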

\begin{proof}
Prerequisitely, we note that $\Tr(\C)_0$ is, in fact, a subcategory by closure under tensor products. This same assumption, together with fullness and the assumption that $\mathbb{1}\in \C_0$, implies that the tensor product of $\C$ restricts to a monoidal structure on $\C_0$ with the same unit and coherence isomorphisms, so $\Tr(\C_0)$ is defined, and the inclusion $\C_0\subseteq \C$ is strong monoidal, so $\Tr(\C_0)\to \Tr(\C)$ is defined. We claim that this functor factors through the inclusion of $\Tr(\C)_0$ as an isomorphism. The existence, surjectivity on objects, and fullness of the factorization being essentially immediate, the main point is to verify faithfulness. Consider the zig-zag of simple equivalences of the form
\[
\xymatrix{
C_1\otimes D\ar[dr]_-f\ar[r]^-{C_1\otimes g}& C_1\otimes D'\ar[d]^-{f'}&C_1\otimes D''\ar[dl]^-{f''}\ar[l]_-{C_1\otimes g'}\\
&C_2,
}
\] with all objects but $D'$ lying in $\C_0$. By assumption, there is a functor $R:\C\to \C_0$ and a natural transformation $\tau:\iota\circ R\to \id$, from which we derive the enlarged diagram 
\[
\xymatrix{
C_1\otimes R(D)\ar[d]_-{C_1\otimes \tau}\ar[rr]^-{C_1\otimes R(g)}&&C_1\otimes R(D')\ar[d]^-{C_1\otimes \tau}&&C_1\otimes R(D'')\ar[d]^-{C_1\otimes\tau}\ar[ll]_-{C_1\otimes R(g')}\\
C_1\otimes D\ar[drr]_-f\ar[rr]^-{C_1\otimes g}&& C_1\otimes D'\ar[d]^-{f'}&&C_1\otimes D''\ar[dll]^-{f''}\ar[ll]_-{C_1\otimes g'}\\
&&C_2.
}
\] This diagram supplies the following chain of simple equivalences: \[f\sim f\circ C_1\otimes\tau\sim f'\circ C_1\otimes \tau \sim f''\circ C_1\otimes\tau \sim f'',\] all of which lie in $\C_0$. In the same way, an arbitrary zig-zag of simple equivalences expressing the equivalence in $\C$ of two complementary morphisms with source and target in $\C_0$ may be replaced by a (perhaps longer) zig-zag of simple equivalences in $\C_0$. In other words, complementary morphisms in $\C_0$ are equivalent if and only if they are equivalent as complementary morphisms in $\C$, which is faithfulness.
\end{proof}

\section{Collision structures}\label{section:collision structures}

In this section, we develop a general combinatorial framework encompassing all of our examples. Inspired by \cite{Petersen:CGCS}, we define a \emph{collision structure} to be a set of partitions of a finite set closed under merging blocks. We define a notion of morphism between collision structures and calculate the projection category associated to the category of collision structures and bijections (Theorem \ref{thm:collision projection}). 

\subsection{Definitions} We begin by recalling a few standard ideas regarding partitions.

\begin{definition}
Let $I$ be a set. A \emph{partition} of $I$ is a set $P$ of nonempty subsets of $I$, called \emph{blocks}, such that every element of $I$ is contained in exactly one block of $P$. We say that $P$ is a \emph{refinement} of $P'$, written $P\leq P'$, if every block of $P'$ is a union of blocks of $P$.
\end{definition}

The data of a partition $P$ of $I$ is equivalent to the data of the equivalence relation $\sim_P$ on $I$ given by declaring that two elements are equivalent provided they lie in the same block of $P$. Note that the empty set admits a unique partition, which is itself empty.

Partitions of $I$ form a poset $\Pi_I$ under refinement, which has the unique minimum $\{\{i\}\}_{i\in I}$ and, if $I$ is nonempty, the unique maximum $\{I\}$.

\begin{definition}
A \emph{collision structure} on the set $I$ is an upward closed subset $S\subseteq \Pi_I$, i.e., such that $P'\in S$ whenever $P\in S$ and $P\leq P'$.
\end{definition}

We think of a collision structure as prescribing which collisions among elements of the underlying set are forbidden. From this point of view, the requirement of upward closure is obvious.

The intersection or union of collision structures on a fixed set is again a collision structure; therefore, it is sensible to speak of the largest and smallest collision structure with a given property, and of the collision structure generated by a set of partitions.

\begin{example}
The \emph{trivial} collision structure $\varnothing\subseteq \Pi_I$ is initial among collision structures on $I$.
\end{example}

Given a partition $P$ of $I$ and an injection $f:I\to J$, we write $f_*P$ for the partition of $J$ given by the images under $f$ of the blocks of $P$, together with the singletons in the complement of the image of $f$. Given a subset $S\subseteq \Pi_I$, we write $f_*S$ for the collision structure generated by the set $\{f_*P: P\in S\}.$

\begin{definition}
Let $S$ and $T$ be collision structures on $I$ and $J$. We say that an injection $f:I\to J$ is a \emph{map of collision structures} if $f_*S\subseteq T$.
\end{definition}

Since we clearly have $g_*f_*S=(g\circ f)_*S$, maps of collision structures on finite sets form a category $\CSI$. We will also be interested in the wide subcategory $\CSB$ of bijective maps of collision structures.

As we will see below in Section \ref{section:projection examples}, a collision structure $S$ has an associated configuration space, in which collisions among particles are forbidden if the resulting partition lies in $S$. These spaces are precisely the generalized configuration spaces of \cite{Petersen:CGCS}. We close this section by observing that the category of collision structures subsumes the combinatorics underlying the graphical configuration spaces of \cite{EastwoodHuggett:ECCP} and the simplicial configuration spaces of \cite{CooperdeSilvaSazdanovic:OCSSC}

\begin{example}\label{example:graph}
A graph $\Gamma$ determines a collision structure $S_\Gamma$ on its set $V$ of vertices by declaring that $P\in \Pi_V$ lies in $S_\Gamma$ if and only if some block of $P$ contains an edge. An injection between vertex sets is a map of collision structures if and only if it is a graph homomorphism.
\end{example}

\begin{example}\label{example:complex}
A simplicial complex $K$ determines a collision structure $S_K$ on its set $V$ of vertices by declaring that $P\in \Pi_V$ lies in $S_K$ if and only if some block of $P$ is not a simplex of $K$ (we use that the set of simplices is closed under the formation of subsets). An injection between vertex sets is a map of collision structures if and only if it preserves non-simplices.\footnote{Maps of this kind are called ``cosimplicial'' in \cite{CooperdeSilvaSazdanovic:OCSSC}. We avoid this terminology for reasons that are likely obvious.}
\end{example}

Note that the collision structure associated to a graph according to Example \ref{example:graph} coincides with the collision structure associated to its independence complex according to Example \ref{example:complex}.

%


From these examples, we obtain the categories $\GI$ and $\SCI$ of injections among graphs and simplicial complexes, respectively, both full subcategories of $\CSI$, and similarly for bijections.

\subsection{Monoidal structure and projection category} The main result of this section identifies the projection category of the category $\CSB$ of collision structures and bijections. In order to formulate such a result, we first require a monoidal structure.

\begin{definition}
Let $S$ and $T$ be collision structures on $I$ and $J$, respectively. The \emph{disjoint union} of $S$ and $T$ is the collision structure $S\sqcup T\subseteq \Pi_{I\sqcup J}$ generated by $(\iota_I)_*S\cup (\iota_J)_*T$, where $\iota_I:I\to I\sqcup J$ denotes the inclusion (resp. $J$).
\end{definition}

This definition is arranged so that $\iota_I$ and $\iota_J$ are maps of collision structures.

\begin{lemma}
Disjoint union of collision structures extends to a unique symmetric monoidal structure such that the forgetful functor $\CSB\to \FB$ is strong monoidal.
\end{lemma}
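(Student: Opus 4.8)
The plan is to transport the symmetric monoidal structure of $(\FB,\sqcup,\varnothing)$ along the forgetful functor $U\colon\CSB\to\FB$, exploiting that $U$ is faithful: a map of collision structures is by definition a bijection of underlying sets satisfying a condition, so it is determined by its image under $U$. This makes uniqueness formal. Indeed, once we require $\otimes$ to be disjoint union on objects and $U$ to be strong—and in fact strict on objects, so that $U(S\otimes T)=US\sqcup UT$—then for morphisms $f,g$ one is forced to have $U(f\otimes g)=Uf\sqcup Ug$, and likewise every component of the associator, unitors, and symmetry must lie over the corresponding structure map of $\FB$; faithfulness then exhibits each as the \emph{unique} such lift. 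It therefore remains only to verify existence: that $\sqcup$ is functorial on $\CSB$ and that the requisite underlying bijections really are maps of collision structures.

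The computational engine is a trio of elementary facts about the pushforward $f_*$ along an injection $f\colon K\to L$, which I would record first. Since collision structures are exactly the upward-closed subsets of $\Pi_K$, the collision structure generated by a family is its upward closure; in particular generation commutes with finite unions. Consequently $f_*$, defined by generation of pointwise images, is monotone on collision structures and distributes over unions, $f_*(S\cup T)=f_*S\cup f_*T$. Finally $f_*$ is compatible with composition, $(gf)_*=g_*f_*$, as already noted in the text.

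With these in hand, bifunctoriality of $\sqcup$ is the assertion that $f\sqcup g\colon S\sqcup T\to S'\sqcup T'$ is a map of collision structures whenever $f\colon S\to S'$ and $g\colon T\to T'$ are. Writing $S\sqcup T=(\iota_I)_*S\cup(\iota_J)_*T$ and using $(f\sqcup g)\circ\iota_I=\iota_{I'}\circ f$ together with the three facts above, this unwinds to
\[
(f\sqcup g)_*(S\sqcup T)=(\iota_{I'})_*f_*S\cup(\iota_{J'})_*g_*T\subseteq(\iota_{I'})_*S'\cup(\iota_{J'})_*T'=S'\sqcup T',
\]
and functoriality in identities and composites is then automatic by faithfulness. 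For the coherence isomorphisms I would observe that any iterated disjoint union is the collision structure on the total disjoint union generated by the pushforwards of the factors along the canonical inclusions, independently of bracketing and order; the underlying canonical bijections intertwine these inclusions, so the components of $\alpha,\lambda,\rho,\sigma$ are isomorphisms of collision structures. The unit is forced to be the trivial collision structure on $\varnothing$: the full collision structure on $\varnothing$ sends the empty partition to the discrete partition of $I$ and hence would swallow all of $\Pi_I$, whereas with the trivial one $(\iota_\varnothing)_*$ contributes nothing and $S\sqcup\varnothing\cong S$.

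Finally, the coherence axioms (pentagon, triangle, hexagons) and the naturality of all structure isomorphisms are equalities between morphisms of $\CSB$ with matching source and target; applying the faithful $U$ turns each into the corresponding equality in $\FB$, which holds there and hence holds in $\CSB$. I do not expect a genuine obstacle here: the only point demanding care is the bookkeeping hidden in the phrase ``generated by'' in the definition of $S\sqcup T$—that is, confirming that generation and pushforward commute with finite unions, so that the displayed calculation and the bracketing-independence of iterated unions hold on the nose rather than merely up to coarsening.
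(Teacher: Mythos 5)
Your proof is correct and rests on the same key computation as the paper's: the observation that iterated disjoint unions are generated by $\bigcup_i(\iota_{I_i})_*S_i$ independently of bracketing and order, so that the coherence bijections of $\FB$ are isomorphisms of collision structures. The extra scaffolding you supply (uniqueness and the coherence axioms via faithfulness of the forgetful functor, bifunctoriality, and identification of the unit as the trivial collision structure on $\varnothing$) is all sound; the paper simply treats these points as routine and verifies only the associator.
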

\begin{proof}
The main point is to verify that the (suppressed) associator of finite sets is an isomorphism of collision structures, but it is not difficult to check, given collision structures $S_i$ on $I_i$ for $i\in\{1,2,3\}$, that $(S_1\sqcup S_2)\sqcup S_3$ and $S_1\sqcup(S_2\sqcup S_3)$ are both generated by the set $\bigcup_{i=1}^3(\iota_{I_i})_*S_i$.
\end{proof}

By functoriality for strong monoidal functors, we obtain an induced functor at the level of\ projection categories, concerning which we have the following result.

\begin{lemma}\label{lem:faithful}
The induced functor $\Tr(\CSB)\to \Tr(\FB)$ is faithful.
\end{lemma}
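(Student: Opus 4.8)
The plan is to prove that the map on hom-sets is injective by showing that every complementary morphism of $\CSB$ is equivalent to a canonical representative depending only on its underlying injection. Faithfulness then follows at once: by Proposition \ref{prop:fi} two morphisms of $\Tr(\CSB)$ have the same image in $\Tr(\FB)\cong\FI$ exactly when their representing complementary bijections restrict to the same injection $I\to J$ after forgetting collision structures, so if both are equivalent to the same canonical representative they are equivalent to one another.

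Concretely, fix collision structures $S$ on $I$ and $T$ on $J$, and a morphism of $\Tr(\CSB)$ from $S$ to $T$ represented by a complementary morphism $f:S\sqcup U\to T$ with underlying bijection $f:I\sqcup K\to J$; set $i=f|_I$, $I'=i(I)$, and $L=J\setminus I'$. Let $h_i:I\sqcup L\to J$ be the bijection that is $i$ on $I$ and the inclusion on $L$. I would first record that collision structures on a fixed set are closed under arbitrary unions, and that disjoint union and pushforward commute with the formation of generated collision structures (each being an upward closure), these being order isomorphisms in the case of a bijection. From this it follows that the collision structures $W$ on $L$ with $(h_i)_*(S\sqcup W)\subseteq T$ are closed under union, since $T$ is upward closed and contains the generators of the relevant pushforward; hence there is a largest such $W_{\max}$, giving a canonical complementary morphism $h_i:S\sqcup W_{\max}\to T$ in $\CSB$ depending only on $i$, $S$, and $T$.

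Next I would exhibit a single elementary equivalence from $f$ to $h_i$. The restriction $g:=f|_K:K\to L$ is a bijection, and at the level of sets $h_i\circ(\id_I\sqcup g)=f$ by construction. Writing $\psi=\id_I\sqcup g$, the identities $\psi\circ\iota_I=\iota_I$ and $\psi\circ\iota_K=\iota_L\circ g$ give $\psi_*(S\sqcup U)=S\sqcup g_*U$, so $(h_i)_*(S\sqcup g_*U)=f_*(S\sqcup U)\subseteq T$. By maximality of $W_{\max}$ this forces $g_*U\subseteq W_{\max}$, so $g:U\to W_{\max}$ is a bijective map of collision structures, i.e. a morphism of $\CSB$. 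The triangle with legs $f$ and $h_i$ and top edge $S\otimes g$ then commutes, since both composites are maps of collision structures with the same underlying bijection $f$. Thus $f\sim h_i$, and as $h_i$ depends only on $i$, any two complementary morphisms in $\CSB$ with equal underlying injection are equivalent.

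The main obstacle is the construction of the canonical representative, namely the existence of the largest valid complement structure $W_{\max}$. This rests on the closure of collision structures under arbitrary unions together with the compatibility of disjoint union, pushforward, and generation with upward closure, and it is precisely the maximality of $W_{\max}$ that upgrades the purely set-theoretic factorization $f=h_i\circ(\id_I\sqcup g)$ into an elementary equivalence inside $\CSB$ by certifying that $g$ preserves collision structures. Everything else is routine verification against the definitions.
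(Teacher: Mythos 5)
Your proof is correct, and it runs on the same engine as the paper's: both arguments exploit the fact that an elementary equivalence only requires a map (not an isomorphism) of collision structures on the complement, so the complement structure can be enlarged at will, using the closure properties of collision structures under unions/generation. The packaging, however, is genuinely different. The paper compares the two given representatives $f_1:S_1\sqcup T_1\to S_2$ and $f_2:S_1\sqcup T_2\to S_2$ directly: it forms the collision structure $T_3$ generated by $g_*T_1\cup T_2$ on the complement of $f_2$, verifies via three containments that $f_2$ is still a map of collision structures out of $S_1\sqcup T_3$, and connects $f_1$ and $f_2$ by the cospan of elementary equivalences $f_1\to (f_2,T_3)\leftarrow (f_2,T_2)$. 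You instead build a canonical normal form $h_i:S\sqcup W_{\max}\to T$ depending only on the underlying injection $i$ and show that every representative admits a single elementary equivalence to it; note that your $W_{\max}$ is exactly the restriction $T|_{J\setminus i(I)}$ in the notation the paper introduces later (the largest collision structure making the inclusion a map). Your route buys more: a canonical representative for each arrow of $\Tr(\CSB)$, which incidentally makes the hom-sets explicit, at the modest cost of the maximality construction; the paper's route is more local, needing only the join of the two structures at hand and no appeal to arbitrary unions.
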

\begin{proof}
For $i\in\{1,2\}$, let $S_i$ and $T_i$ be collision structures on $I_i$ and $J_i$, respectively. Consider the following diagram of bijections: \[\xymatrix{
I_1\sqcup J_1\ar[dr]_-{f_1}\ar[rr]^-{I_1\sqcup g}&& I_1\sqcup J_2\ar[dl]^-{f_2}\\
&I_2.
}\] It suffices to show, assuming that $f_1$ and $f_2$ are maps of collision structures (and $g$ not necessarily so), that $f_1$ and $f_2$ are equivalent as complementary morphisms in $\CSB$. Let $T_3$ denote the collision structure on $J_2$ generated by $g_*T_1$ and $T_2$. Then $g$ is a map of collision structures from $T_1$ to $T_3$, and the identity is a map of collision structures from $T_2$ to $T_3$. We claim that $f_2$ is a map of collision structures from $S_1\sqcup T_3$ to $S_2$, for which it suffices to verify the following three containments: \begin{align*}
(f_2)_*(\iota_{I_1})_*S_1&\subseteq S_2\\
(f_2)_*(\iota_{J_2})_*g_*T_1&\subseteq S_2\\
(f_2)_*(\iota_{J_2})_*T_2&\subseteq S_2.
\end{align*} The first and third containment are the assumption that $f_2$ is a map of collision structures, and the second follows from the assumption that $f_1$ is so, since $f_2\circ \iota_{J_2}\circ g=f_1\circ\iota_{J_1}$ by commutativity. We have established the existence of the commutative diagram
\[\xymatrix{
S_1\sqcup T_1\ar[dr]_-{f_1}\ar[r]^-{I_1\sqcup g}& S_1\sqcup T_3\ar[d]^-{f_2}& S_1\sqcup T_2\ar[dl]^-{f_2}\ar[l]_-{\id}\\
&I_2
}\] of collision structures, which represents a pair of elementary equivalences connecting $f_1$ and $f_2$.
\end{proof}

\begin{example}
Via Examples \ref{example:graph} and \ref{example:complex}, the disjoint union of graphs and of simplicial complexes corresponds to the disjoint union of collision structures. In other words, the inclusions $\GB\subseteq \CSB$ and $\SCB\subseteq\CSB$ are strong monoidal. 
\end{example}

We come now to the main result of this section.

\begin{theorem}\label{thm:collision projection}
There is a canonical isomorphism of categories \[\Tr(\CSB)\cong \CSI.\]
\end{theorem}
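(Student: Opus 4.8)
The plan is to construct an isomorphism $\Phi\colon\Tr(\CSB)\to\CSI$ directly, using the bijective case $\Tr(\FB)\cong\FI$ of Proposition \ref{prop:fi} as a template and the faithfulness of Lemma \ref{lem:faithful} to handle injectivity on morphisms. I define $\Phi$ to be the identity on objects and to send the equivalence class of a complementary morphism $f\colon(I_1,S_1)\sqcup(J,T)\to(I_2,S_2)$ to the restriction $f|_{I_1}\colon I_1\to I_2$. Since an elementary equivalence replaces $(J,T)$ by some $(J',T')$ via a map $g$ that fixes the $I_1$-tensor factor, the restriction $f|_{I_1}$ is unchanged, so $\Phi$ is well defined on equivalence classes; compatibility with identities and composition is checked exactly as in Proposition \ref{prop:fi}, and bijectivity on objects is immediate.

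The first point requiring genuine verification is that $f|_{I_1}$ is actually a map of collision structures, i.e.\ that $(f|_{I_1})_*S_1\subseteq S_2$. The key observation is the pushforward identity $(f|_{I_1})_*P=f_*(\iota_{I_1})_*P$ for every $P\in S_1$: because $f$ is a bijection, applying $f$ to the partition $(\iota_{I_1})_*P$ of $I_1\sqcup J$ (images of the blocks of $P$ together with singletons on $J$) reproduces exactly the images of the blocks of $P$ under $f|_{I_1}$ together with the singletons on $I_2\setminus\im(f|_{I_1})=f(J)$. Since $(\iota_{I_1})_*P$ is a generator of $S_1\sqcup T$, its image lies in $f_*(S_1\sqcup T)\subseteq S_2$, and upward closure of $S_2$ then gives $(f|_{I_1})_*S_1\subseteq S_2$. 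Thus $\Phi$ lands in $\CSI$.

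Faithfulness of $\Phi$ is then immediate: forgetting collision structures defines a functor $\CSI\to\FI$, and the composite $\Tr(\CSB)\xrightarrow{\Phi}\CSI\to\FI$ agrees, via the isomorphism of Proposition \ref{prop:fi}, with the functor $\Tr(\CSB)\to\Tr(\FB)$ induced by the forgetful functor, both sending $[f]$ to the underlying injection $f|_{I_1}$. As the latter is faithful by Lemma \ref{lem:faithful}, so is $\Phi$.

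It remains to prove fullness, which I expect to be the main point. Given a map of collision structures $i\colon(I_1,S_1)\to(I_2,S_2)$, I equip the complement $J:=I_2\setminus\im(i)$ with the trivial collision structure $T$ and form the bijection $\phi\colon I_1\sqcup J\cong I_2$ determined by $i$ on $I_1$ and by the inclusion on $J$. Because $T$ is trivial, $S_1\sqcup T=(\iota_{I_1})_*S_1$, so the condition that $\phi$ be a complementary morphism in $\CSB$ reduces, via the pushforward identity above, to $i_*S_1\subseteq S_2$, which holds by hypothesis. Hence $[\phi]$ is a morphism of $\Tr(\CSB)$ with $\Phi([\phi])=\phi|_{I_1}=i$, establishing fullness and thereby that $\Phi$ is an isomorphism of categories. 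The conceptually important subtlety here---reflecting Remark \ref{remark:quillen}---is that the choice of collision structure on the complement is irrelevant: any two valid choices are connected by elementary equivalences through the identity of $J$, which need not be an isomorphism of collision structures, so no minimality or maximality of $T$ need be imposed and only existence, witnessed by the trivial structure, is required.
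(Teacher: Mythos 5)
Your proof is correct and takes essentially the same route as the paper's: the same pushforward computation $(f|_{I_1})_*S_1=f_*(\iota_{I_1})_*S_1\subseteq f_*(S_1\sqcup T)\subseteq S_2$ shows the restriction lands in $\CSI$, faithfulness comes from Lemma \ref{lem:faithful} together with Proposition \ref{prop:fi}, and the trivial collision structure on the complement supplies the lift. The only difference is packaging: you conclude from bijectivity on objects plus full faithfulness, whereas the paper constructs the inverse functor explicitly and checks uniqueness of lifts (via the observation that $\varnothing\to T$ is always a map of collision structures), which is the same content in a slightly different arrangement.
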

\begin{proof}
The two categories have the same objects, and we wish to extend this equality to an isomorphism of categories. Forgetting collision structures determines the vertical functors in the diagram \[\xymatrix{
\Tr(\CSB)\ar[d]\ar@{-->}[r]&\CSI\ar[d]\\
\Tr(\FB)\ar[r]^-\cong&\FI,
}\] where the bottom functor is the isomorphism of Proposition \ref{prop:fi}. Since the vertical functors are faithful by Lemma \ref{lem:faithful} and by definition, respectively, the dashed functor (extending the identity on objects) is unique if it exists.

By definition, an arrow from $S_1$ to $S_2$ in the source of the putative functor is represented by a bijection $f:I_1\sqcup J\cong I_2$ such that $f_*(S_1\sqcup T)\subseteq S_2$. We have \[(f|_{I_1})_*S_1=(f\circ\iota_{I_1})_*S_1=f_*(\iota_{I_1})_*S_1\subseteq f_*(S_1\sqcup T)\subseteq S_2,\] so the injection $f|_{I_1}$ defines a morphism from $S_1$ to $S_2$ in $\CSI$. Thus, the dashed functor exists. To obtain its inverse, it suffices for the same reasons to note, given an injection $i:I_1\to I_2$ with $i_*S_1\subseteq S_2$, that the bijection $I_1\sqcup (I_2\setminus \mathrm{im}(i))\cong I_2$ lifts uniquely to a complementary morphism in $\CSB$. For existence, we note that the bijection in question is a map of collision structures when $I_2\setminus \mathrm{im}(i)$ carries the empty collision structure. For uniqueness, we note that, for any collision structure $T$ on $I_2\setminus \mathrm{im}(i)$, any permutation of this set defines a morphism of collision structures $\varnothing\to T$; therefore, any lift of $I_1\sqcup (I_2\setminus \mathrm{im}(i))\cong I_2$ to a complementary morphism is equivalent to the lift constructed above.
\end{proof}

\begin{corollary}
There are the following canonical isomorphisms of categories:
\begin{align*}
\Tr(\GB)&\cong \GI\\
\Tr(\SCB)&\cong \SCI.
\end{align*}
\end{corollary}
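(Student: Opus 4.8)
The plan is to realize each isomorphism as a restriction of the isomorphism $\Tr(\CSB)\cong\CSI$ of Theorem \ref{thm:collision projection}, using Proposition \ref{prop:subcategory} to govern the passage to the subcategories $\GB$ and $\SCB$. I would first record the easy hypotheses: $\GB\subseteq\CSB$ is full, contains the monoidal unit $\varnothing$ (the empty graph), and is closed under disjoint union, the latter by the Example identifying disjoint unions of graphs with disjoint unions of collision structures, and likewise for $\SCB$. The substantive point needed for Proposition \ref{prop:subcategory} is that each of $\GB$ and $\SCB$ is a \emph{deformation} of $\CSB$. For $\GB$, I would define the retraction $R\colon\CSB\to\GB$ by sending a collision structure $S$ on $I$ to the graph $R(S)$ on $I$ with an edge $\{i,j\}$ precisely when the partition whose sole nontrivial block is $\{i,j\}$ lies in $S$. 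Upward closure of $S$ gives $S_{R(S)}\subseteq S$, so the identity of $I$ is a map of collision structures $S_{R(S)}\to S$, and these assemble into a natural transformation $\tau\colon\iota\circ R\to\id$. Functoriality is immediate: a morphism $f$ of $\CSB$ satisfies $f_*S\subseteq S'$ and carries the two-block partition on $\{i,j\}$ to the one on $\{f(i),f(j)\}$, so $f$ is a graph homomorphism with $R(f)=f$. The identical recipe—declaring $\sigma$ a simplex of $R(S)$ exactly when the partition with sole nontrivial block $\sigma$ is \emph{not} in $S$—exhibits $\SCB$ as a deformation of $\CSB$, where subset-closure of the simplices again follows from upward closure of $S$.

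With the deformation hypotheses verified, Proposition \ref{prop:subcategory} identifies $\Tr(\GB)$ with the subcategory $\Tr(\CSB)_{\GB}\subseteq\Tr(\CSB)$ whose objects are the graphical collision structures and whose morphisms are equivalence classes of complementary morphisms $f\colon S_1\sqcup T\to S_2$ with $T$ graphical. It then remains to check that the isomorphism $\Tr(\CSB)\cong\CSI$ restricts to an isomorphism $\Tr(\CSB)_{\GB}\cong\GI$. On objects this is the identification of graphs with graphical collision structures from Example \ref{example:graph}. On morphisms, the isomorphism sends $[f\colon S_1\sqcup T\cong S_2]$ to the restriction $f|_{I_1}$, which between graphical collision structures is exactly a graph homomorphism, so the image lands in $\GI$; conversely, the inverse functor built in the proof of Theorem \ref{thm:collision projection} lifts a graph homomorphism using the \emph{trivial} collision structure on the complement of its image, and this trivial structure is the discrete graph, hence graphical. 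Thus every such lift already lies in $\Tr(\CSB)_{\GB}$, giving surjectivity on morphisms, while injectivity is inherited from the isomorphism $\Tr(\CSB)\cong\CSI$. The simplicial case is verbatim, using Example \ref{example:complex} and noting that the trivial collision structure corresponds to the complete complex, which is a simplicial complex.

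The main obstacle is exactly the verification that $\GB$ and $\SCB$ are deformations of $\CSB$: this is what lets Proposition \ref{prop:subcategory} guarantee that no two complementary morphisms lying in the subcategory are identified only through an auxiliary object $T$ outside it, i.e.\ it is the source of faithfulness of $\Tr(\GB)\to\Tr(\CSB)$. Once the retractions $R$ are constructed and seen to land in $\GB$, resp.\ $\SCB$, the remainder is the routine bookkeeping of matching the explicit lifts of Theorem \ref{thm:collision projection} against the requirement that the complementary object be graphical, resp.\ simplicial.
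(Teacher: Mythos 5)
Your proposal is correct, and its skeleton is the paper's: apply Proposition \ref{prop:subcategory} to the inclusions $\GB,\SCB\subseteq\CSB$ and then match the resulting image subcategory against $\GI$, resp.\ $\SCI$, using Theorem \ref{thm:collision projection}. The genuine difference lies in how the deformation hypothesis is verified, which the paper itself calls the main point. You build the tightest possible retractions---$R(S)$ is the largest graphical (resp.\ simplicial) collision structure contained in $S$, read off from which two-element (resp.\ arbitrary) blocks are forbidden---and check functoriality and naturality by hand. The paper avoids all of this: the functor $\FB\to\CSB$ sending $I$ to its trivial collision structure exhibits $\FB$ as a deformation of $\CSB$, since the trivial collision structure is initial (the retraction collapses every $S$ on $I$ to the trivial structure on $I$, and $\tau$ is the identity map of $I$); as the trivial collision structure is simultaneously the discrete graph and the full simplex, $\FB\subseteq\GB\cap\SCB$, and the remark that any subcategory containing a deformation is itself a deformation finishes the argument with no further computation. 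Your route buys genuine coreflections onto $\GB$ and $\SCB$; the paper's buys brevity. One caveat on your simplicial retraction: $S=\Pi_I$ is a legitimate object of $\CSB$ (it contains even the discrete partition), and for it your $R(S)$ has no simplices at all, not even singletons, so under the common convention that every vertex of a simplicial complex spans a simplex, $R(S)$ is not an object of $\SCB$. The repair is to decree all singletons (and $\varnothing$) to be simplices of $R(S)$; the needed containment $S_{R(S)}\subseteq S$ survives, since any block witnessing membership in $S_{R(S)}$ then has at least two elements, and upward closure applies as before. Finally, your explicit matching of morphisms at the end---trivial complements are discrete graphs, resp.\ full simplices, so every injection in $\GI$ or $\SCI$ lifts with complement inside the subcategory---is left implicit in the paper's proof and is worth spelling out as you do.
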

\begin{proof}
We aim to use Proposition \ref{prop:subcategory}, the main point being to verify that $\GB$ and $\SCB$ are deformations of $\CSB$. We begin by observing that the assignment of a finite set to its trivial collision structure defines a fully faithful functor $\FB\to \CSB$, which is a section of the forgetful functor. Identifying $\FB$ with its essential image under this functor, we observe that $\FB$ is a deformation of $\CSB$, since the trivial collision structure is initial. Moreover, we have the containment $\FB\subseteq \GB\cap \SCB$; indeed, the trivial collision structure on $I$ corresponds to the discrete graph on $I$ and to the simplex spanned by $I$. Since a subcategory containing a deformation is itself a deformation, the proof is complete.

\end{proof}


\section{Monoidal matters}

In this section, we consider interactions among various tensor products. First, we prove the structural result that the opposite of the projection category represents oplax symmetric monoidal functors with Cartesian target (Corollary \ref{cor:translation equivalence}). Second, we show that, under restrictive but applicable hypotheses, a variant of Day convolution identifies such oplax functors with certain coalgebras (Corollary \ref{cor:right convolution}). These two results form the bridge from combinatorial structure to algebraic structure underpinning our proof of Theorem \ref{thm:main}.

\subsection{Projection categories and lax structures}\label{section:oplax} The goal of this section is to establish a universal property of the projection category in the symmetric setting. Before articulating this universal property in Theorem \ref{thm:translation equivalence} below, we pause to establish notation regarding a few standard concepts.

\begin{definition}\label{def:oplax}
Let $\C$ and $\D$ be monoidal categories.
\begin{enumerate}
\item A \emph{lax (monoidal)} structure on a functor $F:\C\to \D$ is a natural transformation $\mu: F\otimes F\to F\circ\otimes$ and a morphism $\eta:\mathbb{1}\to  F(\mathbb{1})$ such that the following diagrams commute for all $C_1,C_2,C_3\in\C$.
\[\xymatrix{
F(C_1\otimes(C_2\otimes C_3))\ar@{=}[r]^-\sim&F((C_1\otimes C_2)\otimes C_3)\\
F(C_1)\otimes F(C_2\otimes C_3)\ar[u]^-\mu&F(C_1\otimes C_2)\otimes F(C_3)\ar[u]_-\mu\\
F(C_1)\otimes(F(C_2)\otimes F(C_3))\ar[u]^-{F(C_1)\otimes\mu}\ar@{=}[r]^-\sim&(F(C_1)\otimes F(C_2))\otimes F(C_3)\ar[u]_-{\mu\otimes F(C_3)}
}\] 
\[\xymatrix{
F(C_1\otimes\mathbb{1})&F(C_1)\otimes F(\mathbb{1})\ar[l]_-{\mu} \\
F(C_1)\ar@{=}[r]^-\sim\ar@{=}[u]^-\wr&F(C_1)\otimes\mathbb{1}\ar[u]_-{F(C_1)\otimes\eta}.
}\] A \emph{lax (monoidal) functor} is a functor equipped with a (typically suppressed) lax structure.
\item Suppose that $\C$ and $\D$ are symmetric monoidal. We say that the lax functor $F:\C\to \D$ is \emph{symmetric} if the following diagram commutes:
\[\xymatrix{
F(C_1\otimes C_2)\ar@{=}[r]^-\sim&F(C_2\otimes C_1)    \\
F(C_1)\otimes F(C_2)\ar[u]^-{\mu}\ar@{=}[r]^-\sim & F(C_2)\otimes F(C_1)\ar[u]_-{\mu}.
}\] 

\item Let $F_1$ and $F_2$ be lax functors. A natural transformation $\tau:F_1\to F_2$ is \emph{monoidal} if the following diagrams commute:
\[\xymatrix{
F_1(C_1\otimes C_2)\ar[r]^-\tau&F_2(C_1\otimes C_2)&  F_1(\mathbb{1})\ar[rr]^-\tau&&F_2(\mathbb{1})  \\
F_1(C_1)\otimes F_1(C_2)\ar[u]^-{\mu}\ar[r]^-{\tau\otimes\tau}&F_2(C_1)\otimes F_2(C_2)\ar[u]_-{\mu}&&\mathbb{1}\ar[ul]^-\eta\ar[ur]_-\eta
}\]
\item A (symmetric) \emph{oplax} structure on $F:\C\to \D$ is a (symmetric) lax structure on $F^\mathrm{op}$.
\end{enumerate}
\end{definition}

It is immediate from the definitions that monoidality and symmetry is closed under composition; therefore, lax symmetric monoidal functors and monoidal natural transformations form a category $\Fun^{\mathrm{lax}}(\C,\D)$. Likewise, we have the category $\Fun^\mathrm{oplax}(\C,\D)=\Fun^\mathrm{lax}(\C^\mathrm{op},\D^\mathrm{op})^\mathrm{op}$ of oplax symmetric monoidal functors. The reader should note that we do not reflect the condition of symmetry in the notation, since we will have no cause to consider nonsymmetric (op)lax functors.

\begin{theorem}\label{thm:translation equivalence}
Let $\mathcal{C}$ be a symmetric monoidal category and $\mathcal{D}$ a category with finite coproducts. There is a canonical isomorphism of categories \[\Fun(\Tr(\C),\D)\cong\Fun^{\mathrm{lax}}(\C,\D).\]
\end{theorem}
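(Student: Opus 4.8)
The plan is to use the coCartesian symmetric monoidal structure on $\D$ --- the one whose tensor product is the coproduct $\sqcup$ and whose unit is the initial object $\varnothing=\mathbb{1}_\D$ --- and to translate, term by term, the data of a lax symmetric monoidal functor into the data of a functor on $\Tr(\C)$. The first observation is that, since $\mathbb{1}_\D$ is initial, the unit datum $\eta$ of a lax structure carries no information, while the structure map $\mu_{C_1,C_2}\colon G(C_1)\sqcup G(C_2)\to G(C_1\otimes C_2)$ is, by the universal property of the coproduct, equivalent to a pair of morphisms; the symmetry axiom identifies the second leg with the first after applying $G$ to the symmetry of $\C$, so the entire lax structure is encoded in a single natural family $\lambda_{C_1,C_2}\colon G(C_1)\to G(C_1\otimes C_2)$. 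Tracing through Definition \ref{def:oplax}, the associativity axiom becomes the identity $\lambda_{C_1\otimes C_2,C_3}\circ\lambda_{C_1,C_2}=\lambda_{C_1,C_2\otimes C_3}$ (up to associator), and the unit axiom identifies $\lambda_{C,\mathbb{1}}$ with the image under $G$ of the inverse unitor $C\cong C\otimes\mathbb{1}$.

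On the other side, there is a functor $\iota\colon\C\to\Tr(\C)$, the identity on objects, sending $h\colon C_1\to C_2$ to the class of the complementary morphism $C_1\otimes\mathbb{1}\cong C_1\xrightarrow{h}C_2$; its functoriality is immediate from the composition rule in $\Tr(\C)$. Moreover, each complementary morphism $f\colon C_1\otimes D\to C_2$ factors, in $\Tr(\C)$, as the complement-$D$ insertion $C_1\to C_1\otimes D$ (the class of $\id_{C_1\otimes D}$) followed by $\iota(f)$. This factorization is the crux: it shows that a functor $F$ on $\Tr(\C)$ is determined by its underlying functor $F\circ\iota$ together with the images of the insertions, exactly mirroring the decomposition of a lax structure into $\lambda$.

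With this dictionary in hand I would define the two functors of the claimed isomorphism. Given $F\colon\Tr(\C)\to\D$, let $G=F\circ\iota$ and let $\lambda_{C_1,C_2}$ be the image under $F$ of the complement-$C_2$ insertion; the coherences for $\lambda$ follow from functoriality of $F$ and the identities in $\Tr(\C)$. Conversely, given a lax $G$, define $F$ to agree with $G$ on objects and send $[f\colon C_1\otimes D\to C_2]$ to $G(f)\circ\lambda_{C_1,D}$. Here I would check that this is well defined on equivalence classes --- an elementary equivalence $g\colon D\to D'$ with $f'\circ(C_1\otimes g)=f$ gives $G(f')\circ\lambda_{C_1,D'}=G(f')\circ G(C_1\otimes g)\circ\lambda_{C_1,D}=G(f)\circ\lambda_{C_1,D}$ by naturality of $\lambda$ --- that it preserves identities (by the unit axiom, $\lambda_{C,\mathbb{1}}$ is $G$ of the inverse unitor, so $F$ of the identity class $[C\otimes\mathbb{1}\cong C]$ is the identity), and that it preserves composition (by naturality of $\lambda$ in the first variable together with the associativity identity for $\lambda$). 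The two assignments are visibly inverse on objects; on morphisms one recovers $\lambda$ from $F$ as $F$ of an insertion, namely $G(\id)\circ\lambda=\lambda$, and recovers the value of $F$ on a general class from the factorization above. Finally, a family $\{\alpha_C\colon F(C)\to F'(C)\}$ is natural for $\Tr(\C)$ if and only if it is natural for $\C$ along $\iota$ and compatible with the insertions, which is precisely monoidality of the corresponding transformation of lax functors; this upgrades the bijection on objects of the two functor categories to an isomorphism of categories.

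I expect the main obstacle to be organizational rather than conceptual: carefully matching the three coherence axioms of a lax structure (unit, associativity, symmetry) against the three features of $\Tr(\C)$ (units, composition, tensor-order invariance), and tracking the suppressed unitors and associators of $\C$ so that the correspondence is a strict isomorphism of categories rather than merely an equivalence. The least routine correspondence --- and the one I would present most carefully --- is that between elementary equivalences in $\Tr(\C)$ and naturality of the insertion family $\lambda$, since it is this identification that makes the passage to equivalence classes of complementary morphisms correspond exactly to the well-definedness of a lax functor.
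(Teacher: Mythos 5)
Your proposal is correct and follows essentially the same route as the paper: restriction along the functor $\iota\colon\C\to\Tr(\C)$, extraction of the lax structure from the insertion morphisms, and an explicit inverse sending $[f\colon C_1\otimes D\to C_2]$ to $G(f)\circ\lambda_{C_1,D}$, which is literally the paper's composite $F(C_1)\to F(C_1)\sqcup F(D)\xrightarrow{\mu}F(C_1\otimes D)\xrightarrow{F(f)}F(C_2)$. Your repackaging of the lax structure as the single family $\lambda$ is a presentational simplification rather than a different argument, and you correctly isolate the same crux the paper highlights in Remark \ref{remark:lax}: well-definedness on equivalence classes is exactly naturality of $\lambda$ in the complement variable with respect to arbitrary (non-invertible) arrows.
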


In our application, it will be the dual version of this result that will be of most interest.

\begin{corollary}\label{cor:translation equivalence}
Let $\mathcal{C}$ be a symmetric monoidal category and $\mathcal{D}$ a category with finite products. There is a canonical isomorphism of categories \[\Fun(\Tr(\C)^\mathrm{op},\D)\cong\Fun^{\mathrm{oplax}}(\C^\mathrm{op},\D).\]
\end{corollary}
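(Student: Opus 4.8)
The plan is to deduce this corollary purely formally from Theorem~\ref{thm:translation equivalence} by passing to opposite categories; no new construction is required. The two elementary facts I would invoke are that a category has finite products if and only if its opposite has finite coproducts, and that forming opposites is compatible with functor categories in the sense that $\Fun(\mathcal A,\mathcal B)^\mathrm{op}\cong\Fun(\mathcal A^\mathrm{op},\mathcal B^\mathrm{op})$, naturally in both arguments.

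First I would apply Theorem~\ref{thm:translation equivalence} verbatim, keeping the same symmetric monoidal category $\C$ but taking the target to be $\D^\mathrm{op}$. Since $\D$ has finite products by hypothesis, $\D^\mathrm{op}$ has finite coproducts, so the theorem supplies a canonical isomorphism
\[
\Fun(\Tr(\C),\D^\mathrm{op})\cong\Fun^{\mathrm{lax}}(\C,\D^\mathrm{op}).
\]
Next I would take opposite categories of both sides. On the left, the functor-category identity above (together with $(\D^\mathrm{op})^\mathrm{op}=\D$) gives $\Fun(\Tr(\C),\D^\mathrm{op})^\mathrm{op}\cong\Fun(\Tr(\C)^\mathrm{op},\D)$. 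On the right, the defining identity $\Fun^\mathrm{oplax}(\C^\mathrm{op},\D)=\Fun^\mathrm{lax}((\C^\mathrm{op})^\mathrm{op},\D^\mathrm{op})^\mathrm{op}=\Fun^\mathrm{lax}(\C,\D^\mathrm{op})^\mathrm{op}$ identifies the opposite of the right-hand side with $\Fun^\mathrm{oplax}(\C^\mathrm{op},\D)$. Composing these three canonical isomorphisms yields the asserted isomorphism of categories.

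There is essentially no obstacle here, the entire content residing in the theorem; the only points requiring care are bookkeeping ones. The step most likely to cause confusion is that one should feed the \emph{same} $\C$ into the theorem rather than $\C^\mathrm{op}$: the appearance of $\C^\mathrm{op}$ in the statement is absorbed by the definition of an oplax functor, and the desired $\Tr(\C)^\mathrm{op}$ is produced not by applying $\Tr$ to an opposite but by dualizing the source of the left-hand functor category after the fact. I would also note that the symmetry hypothesis is met trivially, since $\C$ is assumed symmetric monoidal and is used unchanged, and that because each of the three isomorphisms is a strict, canonical isomorphism of categories, so is their composite, matching the assertion of the corollary.
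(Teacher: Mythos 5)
Your proposal is correct and is essentially identical to the paper's own proof: both deduce the corollary from Theorem~\ref{thm:translation equivalence} applied to $\C$ with target $\D^\mathrm{op}$, and then pass to opposites using the identity $\Fun(\mathcal{A},\mathcal{B})^\mathrm{op}\cong\Fun(\mathcal{A}^\mathrm{op},\mathcal{B}^\mathrm{op})$ together with the defining equality $\Fun^\mathrm{oplax}(\C^\mathrm{op},\D)=\Fun^\mathrm{lax}(\C,\D^\mathrm{op})^\mathrm{op}$. The paper records exactly your three-step chain of canonical isomorphisms, so no further comment is needed.
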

\begin{proof}
By our assumption on $\D$, the opposite category $\D^\mathrm{op}$ has finite coproducts, so Theorem \ref{thm:translation equivalence} supplies the middle of the three isomorphisms of categories: \[\Fun(\Tr(\C)^\mathrm{op},\D)\cong\Fun(\Tr(\C),\D^\mathrm{op})^\mathrm{op}\cong\Fun^\mathrm{lax}(\C,\D^\mathrm{op})^\mathrm{op}\cong\Fun^\mathrm{oplax}(\C^\mathrm{op},\D).\]
\end{proof}

We begin the proof of the theorem by observing that $\C$ is the source of a natural functor to its own projection category.

\begin{lemma}\label{lem:projection inclusion}
The assignments $\iota(C)=C$ and $\iota(C_1\to C_2)=[C_1\otimes\mathbb{1}\cong C_1\to C_2]$ determine a functor $\iota:\C\to\Tr(\C)$.
\end{lemma}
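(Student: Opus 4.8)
The plan is to verify the two functor axioms directly. The only genuine content is that both axioms reduce to coherence statements about the unit isomorphisms, so once the representatives are unwound the verification is forced by Mac Lane coherence.

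First I would note that $\iota$ is unambiguously defined on morphisms: to $\phi\colon C_1\to C_2$ we assign the class of the \emph{specified} complementary morphism with complementary object $\mathbb{1}$, namely $C_1\otimes\mathbb{1}\xrightarrow{\sim}C_1\xrightarrow{\phi}C_2$, and a fixed representative determines a well-defined equivalence class. Preservation of identities is then immediate from the previous lemma: the compositional unit of $C$ in $\Tr(\C)$ is precisely the class of the unitor $C\otimes\mathbb{1}\cong C$, which is exactly $\iota(\id_C)$.

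The substance is the equality $\iota(\psi\circ\phi)=\iota(\psi)\circ\iota(\phi)$ for composable $\phi\colon C_1\to C_2$ and $\psi\colon C_2\to C_3$. Unwinding the definition of composition of complementary morphisms with $D_1=D_2=\mathbb{1}$, the right-hand side is represented by
\[
C_1\otimes\mathbb{1}\otimes\mathbb{1}\xrightarrow{f_1\otimes\mathbb{1}}C_2\otimes\mathbb{1}\xrightarrow{\ f_2\ }C_3,
\]
where $f_1=\phi\circ u_{C_1}$ and $f_2=\psi\circ u_{C_2}$ are the chosen representatives and $u$ denotes the unitor, while $\iota(\psi\circ\phi)$ is represented by $C_1\otimes\mathbb{1}\xrightarrow{u_{C_1}}C_1\xrightarrow{\psi\phi}C_3$. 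I would produce an elementary equivalence \emph{from} the former \emph{to} the latter whose comparison map is the unitor $g\colon\mathbb{1}\otimes\mathbb{1}\to\mathbb{1}$; the triangle condition then asks exactly that $f_2\circ(f_1\otimes\mathbb{1})=(\psi\phi\circ u_{C_1})\circ(C_1\otimes g)$, which follows from naturality of the unitors together with the unit triangle axiom (equivalently, the coherence theorem).

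The only possible obstacle is bookkeeping—keeping straight which unitor acts on which factor—but since every morphism in sight is assembled from $\phi$, $\psi$, and canonical unit and associativity isomorphisms, coherence guarantees the diagram commutes and leaves nothing to verify by hand. I would also remark that, as in the definition of elementary equivalence, no appeal to $g$ being an isomorphism is needed, although here $g$ happens to be one.
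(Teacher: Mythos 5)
Your proof is correct and follows essentially the same route as the paper's: both handle identities by identifying $\iota(\id_C)$ with the compositional unit $C\otimes\mathbb{1}\cong C$, and both establish compatibility with composition by exhibiting an elementary equivalence from the composite complementary morphism (with complement $\mathbb{1}\otimes\mathbb{1}$) to $\iota(\psi\circ\phi)$, with comparison map the unitor $\mathbb{1}\otimes\mathbb{1}\to\mathbb{1}$, justified by naturality of the unitors and coherence. The paper packages this as a single commutative diagram whose leftmost vertical arrow is $C_1$ tensored with that unitor, but the content is identical.
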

\begin{proof}
It is immediate that $\iota$ preserves identities. For composition, we appeal to the commutative diagram
\[\xymatrix{
C_1\otimes\mathbb{1}\otimes\mathbb{1}\ar@{=}[r]^-\sim\ar@{=}[d]_-\wr&C_1\otimes\mathbb{1}\ar@{=}[d]_-\wr\ar[r]&C_2\otimes\mathbb{1}\ar@{=}[d]_-\wr\ar@{=}[r]^-\sim&C_2\ar[d]\\
C_1\otimes\mathbb{1}\ar@{=}[r]^-\sim&C_1\ar[r]&C_2\ar[r]&C_3
}\] The composite in the bottom row represents $\iota(C_1\to C_2\to C_3)$, while the full clockwise composite represents $\iota(C_2\to C_3)\circ\iota(C_1\to C_2)$. Since the leftmost vertical arrow is the identity of $C_1$ tensored with the unitor, commutativity implies that the two complementary morphisms are equivalent, as desired. 
\end{proof}

The equivalence of Theorem \ref{thm:translation equivalence} will be given by restriction along the functor $\iota$ of Lemma \ref{lem:projection inclusion}. The following result will allow us to make sense of this idea.

\begin{lemma}\label{lem:oplax automatic}
Let $\D$ be category with finite coproducts.
\begin{enumerate}
\item Given a functor $G:\Tr(\C)\to\D$, the restriction $\iota^*G$ carries a canonical lax structure, which is symmetric.
\item Given a natural transformation $\tau:G_1\to G_2$ in $\Fun(\Tr(\C),\D)$, the restriction $\iota^*\tau$ is monoidal.
\end{enumerate}
\end{lemma}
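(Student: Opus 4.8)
The plan is to construct the canonical lax structure on $\iota^*G$ directly from the combinatorics of complementary morphisms, and then verify the coherence axioms and symmetry essentially formally. For the structure map $\mu$, I would observe that for objects $C_1,C_2\in\C$ there is a canonical complementary morphism representing a projection onto the first tensor factor, namely the equivalence class $[C_1\otimes C_2\cong C_1\otimes C_2\to C_1\otimes C_2]$ read as a map $\iota(C_1\otimes C_2)\to\,?$; more carefully, the projection $C_1\otimes C_2\to C_1$ obtained by discarding $C_2$ is exactly the complementary morphism $[C_1\otimes C_2\xrightarrow{\id} C_1\otimes C_2]$ with complement $C_2$, i.e.\ the arrow $\pi_1\in\Hom_{\Tr(\C)}(C_1\otimes C_2,C_1)$ given by the identity viewed with complement $C_2$. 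Dually there is $\pi_2\in\Hom_{\Tr(\C)}(C_1\otimes C_2,C_2)$ with complement $C_1$. Applying $G$ and using that $\D$ has finite coproducts, the pair $(G(\pi_1),G(\pi_2))$ assembles into a map $G(C_1)\sqcup G(C_2)\to G(C_1\otimes C_2)$; since the lax structure goes $F\otimes F\to F\circ\otimes$ and the monoidal structure on the target $\D$ used here is the coproduct, this is precisely the component $\mu_{C_1,C_2}$ of the desired lax structure. The unit $\eta:\mathbb{1}_\D\to(\iota^*G)(\mathbb{1})$, where $\mathbb{1}_\D$ is the initial object (empty coproduct), is the unique such map.

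Next I would check naturality of $\mu$ in $C_1$ and $C_2$: given morphisms $C_1\to C_1'$ and $C_2\to C_2'$ in $\C$, one compares the two ways around the naturality square by exhibiting, in $\Tr(\C)$, the equality of the relevant composites of $\iota$ of these morphisms with the projections $\pi_i$. This reduces to a diagram of complementary morphisms that commutes because projecting and then mapping agrees with mapping and then projecting, which is a routine instance of the commuting diagrams used already in Lemma~\ref{lem:projection inclusion}. The associativity pentagon and the unit triangle of Definition~\ref{def:oplax}(1) then follow by comparing the iterated projections $C_1\otimes C_2\otimes C_3\to C_i$: both sides of the pentagon are induced by the same triple of projections into the threefold coproduct $G(C_1)\sqcup G(C_2)\sqcup G(C_3)$, and the identification is forced by the universal property of the coproduct together with the associator coherence inside $\Tr(\C)$. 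Symmetry is the observation that the swap $C_1\otimes C_2\cong C_2\otimes C_1$ interchanges $\pi_1$ and $\pi_2$, so the symmetry square of Definition~\ref{def:oplax}(2) commutes by the symmetry of the coproduct in $\D$.

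For part (2), given $\tau:G_1\to G_2$, monoidality amounts to checking the two squares in Definition~\ref{def:oplax}(3). The unit square is automatic since $\eta$ is the unique map out of the initial object. The multiplicativity square commutes because $\mu$ is built out of the projections $\pi_1,\pi_2$ and the coproduct, and $\tau$ is a natural transformation: both composites $G_1(C_1)\sqcup G_1(C_2)\to G_2(C_1\otimes C_2)$ agree componentwise by naturality of $\tau$ applied to each $\pi_i$, and then by the universal property of the coproduct.

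The main obstacle I anticipate is bookkeeping rather than conceptual: correctly tracking which complementary morphism represents each projection, and confirming that the associator and unitor coherences of $\C$—suppressed throughout—genuinely yield the pentagon and triangle on the nose rather than merely up to an unexamined isomorphism. The cleanest way to dispatch this is to phrase every verification as an equality of equivalence classes of complementary morphisms, reducing each axiom to a single commuting diagram in $\C$ of the kind already displayed in Lemma~\ref{lem:projection inclusion}, and then invoke the universal property of finite coproducts in $\D$ to conclude the corresponding identity among maps in $\D$.
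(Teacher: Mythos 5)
Your construction of $\mu$ and $\eta$ is the same as the paper's: the identity and the symmetry of $\C$ determine complementary morphisms from $C_1$ and from $C_2$ into $C_1\otimes C_2$, applying $G$ and invoking the universal property of the coproduct yields $G(C_1)\sqcup G(C_2)\to G(C_1\otimes C_2)$, and the unit is the unique map out of the initial object. However, you have the direction of the $\Tr(\C)$-arrows backwards: by the paper's definition, a complementary morphism runs from a tensor \emph{factor} to the whole object, so the arrow determined by $\id_{C_1\otimes C_2}$ with complement $C_2$ lies in $\Hom_{\Tr(\C)}(C_1,C_1\otimes C_2)$, not in $\Hom_{\Tr(\C)}(C_1\otimes C_2,C_1)$ as you write. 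Taken literally, your $G(\pi_1)$, $G(\pi_2)$ would point out of $G(C_1\otimes C_2)$ and could only assemble into a map to a product, contradicting the coproduct map you then form; the slip is repairable, but the arrows must be read in the paper's direction throughout.

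The genuine gap is your treatment of naturality of $\mu$, which you dismiss as ``a routine instance of the commuting diagrams used already in Lemma~\ref{lem:projection inclusion}.'' Those diagrams witness equivalences of complementary morphisms only along coherence \emph{isomorphisms} (unitors, associators), and that is not enough here; this step is exactly what the paper singles out as the subtle point. By the universal property of the coproduct, naturality of $\mu_{-,C_2}$ at $f\colon C_1\to C_1'$ splits into two component checks in $\Tr(\C)$. The component coming from $C_1$ is indeed settled by an elementary equivalence along a symmetry isomorphism. But the component coming from $C_2$ compares a complementary morphism with complement $C_1\otimes\mathbb{1}$ (the composite $C_2\to C_1\otimes C_2\to C_1'\otimes C_2$) against one with complement $C_1'$ (the direct arrow $C_2\to C_1'\otimes C_2$), and the only available elementary equivalence is along the arrow $C_1\otimes\mathbb{1}\cong C_1\xrightarrow{f}C_1'$, which is \emph{not} an isomorphism for general $f$. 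This asymmetry is the entire reason the lemma holds for $\Tr(\C)$ but fails for Quillen's variant of the construction, where equivalences are generated by isomorphisms only (Remark~\ref{remark:lax}); a verification confined to isomorphism-level equivalences of the kind in Lemma~\ref{lem:projection inclusion}, as you propose, cannot establish naturality and would not detect this distinction. The lemma is still true and your construction is the right one, but this non-invertible elementary equivalence must be exhibited explicitly.
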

\begin{proof}
Since the unit in $\D$ is initial, we have the unique map $\mathbb{1}\to G(\mathbb{1})$. The identity and the symmetry of $\C$ determine complementary morphisms from $C_1$ and $C_2$ to $C_1\otimes C_2$, respectively. Applying $G$ and invoking the universal property of the coproduct in $\D$, we obtain a family of maps of the form $G(C_1)\sqcup G(C_2)\to G(C_1\otimes C_2)$. The verification that these maps form a natural transformation, although straightforward, reveals a subtle asymmetry, which may at first be surprising; for this reason, we choose to include some details. The verification of the axioms of a symmetric lax structure and a monoidal natural transformation are left to the enthusiastic reader.
 
By the universal property of the coproduct, naturality in the first variable amounts to the commutativity of the diagrams obtained by applying $G$ to the following two diagrams in $\Tr(\C)$: 
\[\xymatrix{
C_1\ar[d]_-{[C_1\otimes C_2=C_1\otimes C_2]}\ar[rrrrr]^-{[C_1\otimes\mathbb{1}\cong C_1\xrightarrow{f} C_1']}&&&&&C_1'\ar[d]^-{[C_1'\otimes C_2=C_1'\otimes C_2]}\\
C_1\otimes C_2\ar[rrrrr]^{[C_1\otimes C_2 \otimes\mathbb{1}\cong C_1\otimes C_2\xrightarrow{f\otimes C_2} C_1'\otimes C_2]}&&&&&C_1' \otimes C_2.
}\]
\[
\xymatrix{
&&&C_2\ar[dlll]_-{[C_2\otimes C_1\cong C_1\otimes C_2]\qquad}\ar[drrr]^-{\qquad[C_2\otimes C_1'\cong C_1'\otimes C_2]}\\
C_1\otimes C_2\ar[rrrrrr]^-{[C_1\otimes C_2\otimes\mathbb{1}\cong C_1\otimes C_2\xrightarrow{f\otimes C_2} C_1'\otimes C_2]}&&&&&&C_1'\otimes C_2
}
\] The composites in the first diagram are represented by the respective rows of the following commutative diagram in $\C$:
\[\xymatrix{
C_1\otimes\mathbb{1}\otimes C_2\ar@{=}[d]_-\wr\ar@{=}[r]^-\sim&C_1\otimes C_2\ar[r]^-{f\otimes C_2}\ar@{=}[d]&C_1'\otimes C_2\ar@{=}[d]\\
C_1\otimes C_2\otimes\mathbb{1}\ar@{=}[r]^-\sim&C_1\otimes C_2\ar[r]^-{f\otimes C_2}&C_1'\otimes C_2.
}\] As indicated, the symmetry of $\C$ supplies an elementary equivalence between the two, establishing the desired commutativity. On other hand, the counterclockwise composite and righthand diagonal arrow in the second diagram are represented by the clockwise composite and bottom arrow, respectively, of the following commutative diagram in $\C$:
\[
\xymatrix{
C_2\otimes C_1\otimes\mathbb{1}\ar@{=}[d]_-\wr\ar@{=}[r]^-\sim&C_1\otimes C_2\otimes\mathbb{1}\ar@{=}[d]^-\wr\\
C_2\otimes C_1\ar[d]_-{C_2\otimes f}\ar@{=}[r]^-\sim&C_1\otimes C_2\ar[d]^-{f\otimes C_2}
\\
C_2\otimes C_1'\ar@{=}[r]^-\sim&C_1'\otimes C_2.
}
\] The lefthand vertical composite gives a simple equivalence between the two, establishing the claim. Naturality in the second variable is similar.
\end{proof}

Note that the arrow providing the final simple equivlanece of this proof is in general \emph{not an isomorphism}---see Remark \ref{remark:lax} below.

\begin{proof}[Proof of Theorem \ref{thm:translation equivalence}]
Invoking Lemma \ref{lem:oplax automatic}, we obtain the (abusively named) functor \[\iota^*:\Fun(\Tr(\C),\D)\to\Fun^{\mathrm{lax}}(\C,\D),\] which we claim to be an isomorphism. To prove this claim, we construct an inverse isomorphism explicitly. Given a symmetric lax functor $F:\C\to \D$, we define the value of $\overline F:\Tr(\C)\to \D$ on the equivalence class of $f:C_1\otimes D\to C_2$ to be the composite 
\[
F(C_1)\to F(C_1)\sqcup F(D)\xrightarrow{\mu} F(C_1\otimes D)\xrightarrow{F(f)}F(C_2).
\]
One checks immediately that this definition is independent of the choice of representative and that $\overline F$, so defined, is a functor. To conclude, it suffices to verify that the objectwise identities $\overline{\iota^*G}(C)=G(C)$ and $\iota^*\overline F(C)=F(C)$ form a natural transformation and a monoidal natural transformation, respectively. 

Unpacking the definitions, one finds that the value of $\overline{\iota^*G}$ on the equivalence class represented by $C_1\otimes D\to C_2$ is the value of $G$ on the equivalence class of the composite of $C_1\otimes D\otimes\mathbb{1}\cong C_1\otimes D\to C_2$, viewed as a complementary morphism with source $C_2\otimes D$, with $C_1\otimes D=C_1\otimes D$, viewed as a complementary morphism with source $C_1$. This composite is simply $C_1\otimes D\otimes\mathbb{1} \cong C_1\otimes D\to C_2$, viewed as a complementary morphism with source $C_1$, which is equivalent to our original complementary morphism via the unitor $D\otimes\mathbb{1}\cong D$. It follows that $\overline{\iota^*G}=G$.

Similarly, one finds that the value of $\iota^*\overline F$ on a morphism $C_1\to C_2$ is the composite in the top row of the diagram 
\[
\xymatrix{
F(C_1)\ar@{=}[dr]_-\sim\ar[r]&F(C_1)\sqcup F(\mathbb{1})\ar[r]^-\mu&F(C_1\otimes\mathbb{1})\ar@{=}[r]^-\sim&F(C_1)\ar[r]& F(C_2)\\
&F(C_1)\sqcup\mathbb{1}\ar[u]_-{F(C_1)\sqcup\eta}\ar@{=}[urr]_-\sim
}
\]
Since the diagram commutes by our assumption on $F$, this composite is simply the value of $F$ on our original morphism. It follows that $\iota^*\overline F=F$ as bare functors, so it remains to verify that the two lax structures coincide. Since the unit in $\D$ is initial, the second diagram in the definition of a monoidal natural transformation commutes automatically. As for the first, it is an easy exercise to check that the lax structure morphism for $\iota^*\overline{F}$ produced by Lemma \ref{lem:oplax automatic}, as a morphism $F(C_1)\sqcup F(C_2)\to F(C_1\otimes C_2)$ in $\D$, has the same components as $\mu$; we remark only that the argument uses the compatibility of $\mu$ with the respective monoidal symmetries. The claim then follows from the universal property of the coproduct in $\D$.
\end{proof}

\begin{remark}\label{remark:lax}
As our proof shows, the (rather curious) situation is that the universal property of Theorem \ref{thm:translation equivalence} fails to hold for Quillen's category not because one cannot define the desired lax monoidal structure maps, but rather because they do not form a natural transformation unless $\C$ is a groupoid. 
The coarser equivalence relation of $\Tr(\C)$ is precisely what is needed to repair this defect.
\end{remark}

\begin{example}\label{example:walking iso}
We return to Example \ref{example:walking arrow}. By definition, the data of a functor from $\Delta^1$ is that of an arrow $f:X\to Y$ in the target, and our previous analysis shows that the data of an extension of this functor to Quillen's construction is that of a retraction $g$ of $f$. Assuming the target category to have finite coproducts, suppose that we are instead given the data of a lax monoidal structure on the functor in question. The composite arrow \[g: Y\to X\sqcup Y\to X\] is again a left inverse to $f$, as is readily seen from the commutative diagram
\[\xymatrix{
X\sqcup Y\ar[d]\ar[r]^-{f\sqcup Y}&Y\sqcup Y\ar[d]&\varnothing\sqcup Y\ar[l]_-{!\sqcup Y}\\
X\ar[r]^-f& Y.\ar[ur]_-\cong
}\] According to Example \ref{example:walking arrow}, the category $\Tr(\Delta^1)$ is the walking isomorphism, so we should expect that $g$ is also a right inverse; indeed, this fact follows easily from the commutative diagram 
\[\xymatrix{
\varnothing\sqcup X\ar[r]^-{!\sqcup X}&X\sqcup X\ar[d]\ar[r]^-{X\sqcup f}&X\sqcup Y\ar[d]\\
&X\ar[ul]^-\cong\ar@{=}[r]&X.
}\]
\end{example}

\subsection{Day convolution} In this section, we consider two natural tensor product operations defined on functors between symmetric monoidal categories, the left and right Day convolution tensor products. With mild assumptions on the two categories, it is well known that left convolution determines a symmetric monoidal structure on the functor category. A game of opposites then gives conditions under which right convolution determines a symmetric monoidal structure. Although these conditions are significantly more restrictive, they are nevertheless applicable in our setting of interest. 

\begin{definition}
Let $F,G:\C\to\D$ be functors. The left (resp. right) \emph{Day convolution tensor product} of $F$ and $G$ is the left (resp. right) Kan extension $F\otimes G$ in the diagram \[\xymatrix{
\C\times\C\ar[d]_-{\otimes_\C}\ar[r]^-{F\times G}&\D\times\D\ar[r]^-{\otimes_\D}&\D\\
\C\ar@{-->}[urr]_-{F\otimes G}
}\]
\end{definition}

It should be emphasized that either convolution may fail to exist for some or all pairs of functors with fixed source and target. When the left convolution does exist, it is given explicitly by the formula \[(F\otimes G)(C)=\colim\left((\otimes_\C\downarrow C)\to \C\times \C\xrightarrow{F\times G}\D\times\D\xrightarrow{\otimes_\D}\D\right),\] and similarly for right convolution. Mere existence, however, does not guarantee that convolution extends to a monoidal structure.

\begin{theorem}\label{thm:day}
If $\D$ admits, and $\otimes_\D$ distributes over, colimits indexed by $(\otimes_\C\downarrow C)$ for every object $C\in\C$, then left Day convolution extends to a canonical symmetric monoidal structure on $\Fun(\C,\D)$. In this case, there is a canonical isomorphism of categories \[\Fun^{\mathrm{lax}}(\C,\D)\cong \mathrm{Alg}_{\mathrm{Com}}(\Fun(\C,\D)).\]
\end{theorem}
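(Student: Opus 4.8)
The plan is to treat the two assertions in turn, leaning throughout on the universal property of the left Kan extension. For the existence of the monoidal structure, I would first recall that under the stated hypotheses the colimit formula exhibits $F\otimes G$ as a genuine pointwise left Kan extension, so that it enjoys the universal property
\[
\Hom_{\Fun(\C,\D)}(F\otimes G,H)\cong \Hom_{\Fun(\C\times\C,\D)}\big(\otimes_\D\circ(F\times G),\,H\circ\otimes_\C\big),
\]
whose right-hand side is the set of natural families $F(C_1)\otimes_\D G(C_2)\to H(C_1\otimes_\C C_2)$. The convolution unit is $\Lan_{\mathbb{1}_\C}\mathbb{1}_\D$, the extension along $\mathbb{1}_\C\colon\pt\to\C$ of the functor picking out $\mathbb{1}_\D$. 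The associator, symmetry, and unit isomorphisms are then produced in the standard way: the distributivity hypothesis guarantees that $\otimes_\D$ commutes with the relevant colimits, so that both $(F\otimes G)\otimes H$ and $F\otimes(G\otimes H)$ are computed by the \emph{same} iterated colimit over the comma categories $(\otimes_\C\downarrow-)$, and a Fubini-type comparison of these colimits supplies the associator; symmetry comes from the symmetries of $\otimes_\C$ and $\otimes_\D$, and the unit isomorphisms from the respective unit constraints. Coherence (pentagon, hexagon, triangle) is inherited from $\C$ and $\D$ by the same colimit-commutation. As the text notes, this portion is standard, and I would cite it rather than belabor it.

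For the identification of categories, I would argue entirely through the universal property, keeping the underlying functor fixed. A commutative-algebra structure on $F$ consists of a multiplication $m\colon F\otimes F\to F$ and a unit $u\colon\mathbb{1}\to F$ satisfying associativity, unit, and commutativity. By the displayed universal property, $m$ is precisely the datum of a natural family $\mu\colon F(C_1)\otimes F(C_2)\to F(C_1\otimes C_2)$, and, by the universal property of the convolution unit, $u$ is precisely a map $\eta\colon\mathbb{1}_\D\to F(\mathbb{1}_\C)$; these are exactly the data of a symmetric lax structure in the sense of Definition \ref{def:oplax}. The remaining content is to check that the three algebra axioms translate into the coherence and symmetry diagrams. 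I would transport each axiom along the universal property: the associativity square for $m$, composed with the Fubini comparison underlying the Day associator, becomes the lax associativity hexagon; the unit axiom becomes the lax unit triangle; and the commutativity axiom, which invokes the Day symmetry built from that of $\otimes_\C$, becomes the symmetry diagram. Finally, a morphism of algebras is by definition a map of underlying functors commuting with $m$ and $u$, which transports to a natural transformation commuting with $\mu$ and $\eta$, i.e.\ a monoidal natural transformation; since both functors between the two categories are the identity on underlying data, they are mutually inverse, yielding an isomorphism of categories rather than a mere equivalence.

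The main obstacle is the bookkeeping in the second step: one must verify that the Day associator and symmetry, defined abstractly as comparison isomorphisms of colimits, convert the \emph{strict} algebra axioms into the lax coherence axioms on the nose. The delicate point is that the Day associator is not a literal identity but a Fubini isomorphism, so matching the associativity square with the lax hexagon requires tracking how this comparison acts under the universal property. I expect this to be the only genuinely technical part; once the associator and symmetry are pinned down explicitly on the representing objects, the correspondence of axioms is a diagram chase that I would leave to the reader.
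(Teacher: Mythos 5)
Your proposal is correct and follows essentially the same route as the paper: the monoidal structure is obtained by using distributivity to identify the iterated convolutions with a single Kan extension along $\otimes_\C^{(n)}$ (your Fubini comparison is the paper's Corollary \ref{cor:day tensor power}), with coherence inherited from $\C$ and $\D$, and the algebra/lax correspondence is read off from the universal property of the Kan extension, with the axiom-matching left as a diagram chase just as in the paper.
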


Although this result is essentially standard, it is typically stated with the unnecessarily strong assumption that $\otimes_\D$ distributes over \emph{all} colimits. In the left handed setting, this assumption is almost always satisfied; it is in dualizing to the right handed setting in Corollary \ref{cor:right convolution} that we require the weaker assumption. For these reasons, we have opted to include an outline of the proof---the reader may consult \cite{Day:OCCF,Glasman:DCIC,MandellMaySchwedeShipley:MCDS}, for example, for further details.

\begin{proof}[Proof of Theorem \ref{thm:day}]
The existence assumption grants that $F\otimes G$ is defined for every $F$ and $G$. A unit is supplied by the functor $\mathbb{1}_\Day$ given by the left Kan extension of the inclusion of $\mathbb{1}_\D$ along the inclusion of $\mathbb{1}_\C$. Indeed, we claim that the left Kan extensions indicated by the dashed arrows are as claimed in the following diagrams: \[\xymatrix{
\C\ar[r]^-{(\mathbb{1}_\D, F)}\ar[d]_-{(\mathbb{1}_\C, \id)}&\D\times\D\ar[r]^-{\otimes_\D}&\D&&\C\ar@{=}[dd]\ar[rr]^-F&&\D\\
\C\times\C\ar@{-->}[ur]_-{\mathbb{1}_\Day\times F}\ar[d]_-{\otimes_\C}\\
\C\ar@{-->}[uurr]_-{\mathbb{1}_\Day\otimes F}&&&&\C\ar@{-->}[uurr]_-F
}\] For the righthand diagram, there is nothing to show, and the identification of the innermost Kan extension in the lefthand diagram is immediate from the definitions. Invoking our assumption on distributivity of $\otimes_\D$, it follows that the outer left Kan extension is as indicated. We obtain a left unitor by observing that the left unitors of $\C$ and $\D$ give an isomorphism between the two diagrams, and similarly for a right unitor. The same considerations suffice to identify the left Kan extensions in the following diagram:
\[\xymatrix{
\C\times\C\times\C\ar[d]_-{\otimes_\C\times\id}\ar[rr]^-{F_1\times F_2\times F_3}&&\D\times\D\times\D\ar[r]^-{\otimes_\D\times \id}&\D\times\D\ar[r]^-{\otimes_\D}&\D\\
\C\times\C\ar[d]_-{\otimes_\C}\ar@{-->}[urrr]_-{\quad F_1\otimes F_2\times F_3}\\
\C\ar@{-->}[uurrrr]_-{\quad (F_1\otimes F_2)\otimes F_3}
}\] We obtain an associator after observing that the associators of $\C$ and $\D$ furnish an isomorphism between this diagram and the corresponding diagram for $F_1\otimes(F_2\otimes F_3)$. The same approach furnishes a symmetry. The coherence axioms for $\Fun(\C,\D)$ follow directly from the coherence axioms for $\C$ and $\D$, since each structure morphism in the former was built from the corresponding structure morphisms in the latter two.

Toward the final claim, we observe that, by the universal property of the colimit, a monoid structure on $F:\C\to \D$ provides a compatible collection of maps $F(C_1)\otimes F(C_2)\to F(C)$, one for each arrow $C_1\otimes C_2\to C$. By specializing to the case $C_1\otimes C_2=C$, such a collection furnishes a collection of candidate components $F(C_1)\otimes F(C_2)\to F(C_1\otimes C_2)$ of a lax structure map. Conversely, a lax structure furnishes a candidate monoid structure map via the composites \[F(C_1)\otimes F(C_2)\to F(C_1\otimes C_2)\to F(C).\] Similar remarks apply to units, and it remains to verify that the associativity, symmetry, and unitality constraints of the two structures coincide, a task we leave to the reader.
\end{proof}

Before continuing we record a useful consequence of the proof.

\begin{corollary}\label{cor:day tensor power}
If $\D$ admits, and $\otimes_\D$ distributes over, colimits indexed by $(\otimes_\C\downarrow C)$ for every object $C\in\C$, then the left Day convolution tensor product $\bigotimes_{i=1}^n F_i$ is canonically isomorphic to the left Kan extension in the diagram \[\xymatrix{
\C^n\ar[d]_-{\otimes_\C^{(n)}}\ar[rr]^-{(F_i)_{i=1}^n}&& \D^n\ar[r]^-{\otimes_\D^{(n)}}&\D\\
\C\ar@{-->}[urrr]_-{\otimes_{i=1}^nF_i}
}\]
\end{corollary}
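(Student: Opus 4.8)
The plan is to argue by induction on $n$, realizing the $n$-fold convolution as an iterated binary convolution and then recognizing the latter as a single Kan extension by pasting. The cases $n=1$ (Kan extension along the identity) and $n=2$ (the very definition of left convolution) require nothing. The inductive mechanism is precisely the associativity computation carried out for $n=3$ in the proof of Theorem \ref{thm:day}, iterated; the point of isolating it here is to make that iteration explicit.

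For the inductive step, I would factor the $n$-fold tensor of $\C$ as a composite
\[
\C^n\xrightarrow{\ r_n\ }\C^{n-1}\xrightarrow{\ r_{n-1}\ }\cdots\xrightarrow{\ r_2\ }\C,
\]
where each $r_k\colon\C^k\to\C^{k-1}$ tensors the first two coordinates and is the identity on the remaining $k-2$, so that $r_2\circ\cdots\circ r_n\cong\otimes_\C^{(n)}$ via the associator. Since left Kan extensions compose, $\Lan_{\otimes_\C^{(n)}}$ is computed by pasting the extensions $\Lan_{r_k}$, provided each intermediate extension exists. To identify the $k$-th step, I would note that the comma category $(r_k\downarrow(C_1,\dots,C_{k-1}))$ factors as $(\otimes_\C\downarrow C_1)\times\prod_{i=2}^{k-1}(\id\downarrow C_i)$; as the latter factors carry terminal objects, the colimit computing $\Lan_{r_k}$ collapses to a colimit indexed by the single binary comma category $(\otimes_\C\downarrow C_1)$, with all other coordinates held fixed. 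Such colimits exist in $\D$ by hypothesis, which legitimizes the pasting. Distributivity of $\otimes_\D$ over exactly these binary colimits then allows me to pull the fixed factors out of the colimit, so that $\Lan_{r_k}$ has the effect of replacing the active adjacent pair of functors by their binary convolution. Running the tower from top to bottom therefore transforms $\otimes_\D^{(n)}\circ(F_i)_{i=1}^n$ into successive partial convolutions, terminating in the iterated binary convolution, which the associativity isomorphisms of Theorem \ref{thm:day} identify with $\bigotimes_{i=1}^n F_i$.

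The step I expect to demand the most care is the bookkeeping in this pasting: at the $k$-th stage the functor being extended is an iterated binary $\otimes_\D$ in which only the freshly-tensored pair of coordinates is active, all other factors being held fixed. One must check that repeated application of the single binary distributivity hypothesis—once for each fixed outer factor, peeling from the outside in—pulls these factors out of the $(\otimes_\C\downarrow C_1)$-colimit, leaving precisely the binary convolution of the active pair. This is what keeps the argument within the stated hypotheses: no distributivity over higher comma categories $(\otimes_\C^{(m)}\downarrow C)$ is ever required, since every tower step tensors only two slots. The remaining obligations—naturality of the comma-category factorizations and terminal-object reductions, and coherence of the accumulated associativity and symmetry isomorphisms—follow the template set by the $n=3$ diagram in the proof of Theorem \ref{thm:day}, and I would relegate them to the reader as there.
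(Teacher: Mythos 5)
Your proof is correct and takes essentially the same route as the paper: there the corollary is recorded as a direct consequence of the proof of Theorem \ref{thm:day}, whose associativity step already carries out exactly your argument for $n=3$ --- factoring $\otimes_\C^{(3)}$ into binary tensors, identifying the intermediate Kan extension via the terminal-object collapse of the slice factors and binary distributivity, and pasting. Your induction simply makes the iteration of that step explicit for general $n$, which is precisely what the paper leaves implicit.
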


The existence of the right convolution monoidal structure will rely on a technical condition on the structure of overcategories in $\C$, which we refer to as \emph{sparsity}. We record several other related technical conditions for later use. In the following definition, we view finite sets as discrete categories.

\begin{definition}\label{def:comma conditions}
Let $C$ be an object in the symmetric monoidal category $\C$.
\begin{enumerate}
\item We say that $C$ is $n$-\emph{separable} if $(\otimes_\C^{(n)}\downarrow C)$ receives a final functor from a finite set.
\item We say that $C$ is \emph{freely} $n$-\emph{separable} if $(\otimes_\C^{(n)}\downarrow C)$ receives a $\Sigma_n$-equivariant final functor from a finite free $\Sigma_n$-set.
\item We say that $\C$ is \emph{sparse} if every object of $\C$ is $2$-separable.
\end{enumerate}
\end{definition}

\begin{corollary}\label{cor:right convolution}
If $\C$ is a sparse symmetric monoidal category and $\D$ an additive tensor category, then right Day convolution extends to a canonical symmetric monoidal structure on $\Fun(\C^\mathrm{op},\D)$. In this case, there is a canonical isomorphism of categories \[\Fun^{\mathrm{oplax}}(\C^\mathrm{op},\D)\cong \mathrm{Coalg}_{\mathrm{Com}}(\Fun(\C^\mathrm{op},\D)).\]
\end{corollary}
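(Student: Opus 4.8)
The plan is to deduce the statement from Theorem \ref{thm:day} by a systematic passage to opposite categories, which is the ``game of opposites'' alluded to above. The starting point is the canonical anti-equivalence
\[\Fun(\C^\mathrm{op},\D)\cong\Fun(\C,\D^\mathrm{op})^\mathrm{op},\]
a special case of the identity $\Fun(\mathcal{A},\mathcal{B})^\mathrm{op}\cong\Fun(\mathcal{A}^\mathrm{op},\mathcal{B}^\mathrm{op})$. Since a right Kan extension valued in $\D$ is a left Kan extension valued in $\D^\mathrm{op}$, and since the tensor product of an opposite monoidal category agrees on objects with the original, the right Day convolution on $\Fun(\C^\mathrm{op},\D)$ corresponds under this anti-equivalence to the \emph{left} Day convolution on $\Fun(\C,\D^\mathrm{op})$. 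Thus it suffices to apply Theorem \ref{thm:day} to the pair $(\C,\D^\mathrm{op})$ and then transport the conclusions across the anti-equivalence.

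The crux is therefore to verify the hypotheses of Theorem \ref{thm:day} for $(\C,\D^\mathrm{op})$: that $\D^\mathrm{op}$ admits, and $\otimes_{\D^\mathrm{op}}$ distributes over, colimits indexed by $(\otimes_\C\downarrow C)$ for every $C$. A colimit in $\D^\mathrm{op}$ indexed by a category $I$ is precisely a limit in $\D$ indexed by $I^\mathrm{op}$, so I must understand limits over $(\otimes_\C\downarrow C)^\mathrm{op}$. Here sparsity enters: by hypothesis $C$ is $2$-separable, so there is a final functor to $(\otimes_\C\downarrow C)$ from a finite set $K$; since $K$ is discrete we have $K^\mathrm{op}=K$, and the opposite of this functor is an initial functor $K\to(\otimes_\C\downarrow C)^\mathrm{op}$, whence every limit over $(\otimes_\C\downarrow C)^\mathrm{op}$ is computed as the corresponding finite product indexed by $K$. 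In the additive category $\D$ a finite product is a biproduct, which exists and is simultaneously a finite coproduct; since $\D$ is an additive tensor category, $\otimes_\D$ distributes over finite coproducts, hence over this product. This establishes exactly the (dualized) distributivity hypothesis, and explains why the restrictive combination of sparsity and additivity is precisely what is required.

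With the hypotheses in hand, Theorem \ref{thm:day} endows $\Fun(\C,\D^\mathrm{op})$ with a symmetric monoidal structure given by left convolution, together with a canonical isomorphism $\Fun^\mathrm{lax}(\C,\D^\mathrm{op})\cong\mathrm{Alg}_\mathrm{Com}(\Fun(\C,\D^\mathrm{op}))$. To finish I take opposites. The opposite of a symmetric monoidal category is again symmetric monoidal, so the structure transports to $\Fun(\C,\D^\mathrm{op})^\mathrm{op}\cong\Fun(\C^\mathrm{op},\D)$, where it is the right convolution by the first paragraph; this is the first assertion. For the second, I combine the definitional identity $\Fun^\mathrm{oplax}(\C^\mathrm{op},\D)=\Fun^\mathrm{lax}(\C,\D^\mathrm{op})^\mathrm{op}$ with the general identity $\mathrm{Alg}_\mathrm{Com}(\mathcal{M})^\mathrm{op}\cong\mathrm{Coalg}_\mathrm{Com}(\mathcal{M}^\mathrm{op})$, applied to $\mathcal{M}=\Fun(\C,\D^\mathrm{op})$, to convert the algebra isomorphism above into the desired isomorphism $\Fun^{\mathrm{oplax}}(\C^\mathrm{op},\D)\cong\mathrm{Coalg}_{\mathrm{Com}}(\Fun(\C^\mathrm{op},\D))$.

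The main obstacle is the bookkeeping in the hypothesis check: tracking how the finality condition of sparsity becomes an initiality condition after dualization, and recognizing that the resulting limits collapse to finite biproducts over which $\otimes_\D$ automatically distributes by additivity. Everything else is formal. Once this single (co)limit-theoretic reduction is secured, the remaining assertions follow by routine manipulation of opposite categories, for which one should double-check only that the correspondences of tensor products, of lax versus oplax functors, and of algebras versus coalgebras are mutually compatible.
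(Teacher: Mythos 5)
Your proposal is correct and follows essentially the same route as the paper: both pass through the anti-equivalence $\Fun(\C^\mathrm{op},\D)\cong\Fun(\C,\D^\mathrm{op})^\mathrm{op}$, apply Theorem \ref{thm:day} to the pair $(\C,\D^\mathrm{op})$, use sparsity to collapse the relevant colimits in $\D^\mathrm{op}$ to finite coproducts (finite products, i.e.\ biproducts, in $\D$), and use additivity to convert distributivity over finite coproducts into distributivity over finite products. Your write-up merely makes explicit the finality-to-initiality bookkeeping and the transport of lax/oplax and algebra/coalgebra structures that the paper leaves implicit.
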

\begin{proof}
In light of the isomorphism $\Fun(\C^\mathrm{op},\D)\cong \Fun(\C,\D^\mathrm{op})^\mathrm{op}$, together with the fact that the formation of the opposite category interchanges limits and colimits, it suffices to show that $\C$ and $\D^\mathrm{op}$ satisfy the hypotheses of Theorem \ref{thm:day}. By sparsity, a colimit indexed by $(\otimes_\C\downarrow C)$ is simply a finite coproduct in $\D^\mathrm{op}$. The proof is complete upon observing that $\D^\mathrm{op}$ is also an additive tensor category, since distributivity over finite coproducts is equivalent to distributivity over finite products by additivity.
\end{proof}

\begin{remark}
The right convolution tensor product seems less well known than its sinistral cousin, perhaps due to the apparent restrictiveness of sparsity. We direct the reader to \cite{Knudsen:HEA} for a prior instance of its use; one assumes that there are other antecedents, but none are known to the author.
\end{remark}

\subsection{Examples}\label{section:day examples} In this section, we specialize the general theory of the preceding section to the examples of interest.


Given a collision structure $S$ on $J$ and a function $f:I\to J$, we write $f^*S$ for the largest collision structure on $I$ for which $f$ is a map of collision structures, i.e.,  \[f^*S=\{P\in\Pi_I: f_*P\in S\}.\] It is easy to check that $f^*S$, so defined, is a collision structure; note, however, that it may be empty. In the case of the inclusion of a subset $I_0\subseteq I$, we write $S|_{I_0}$. 

\begin{example}\label{example:full subgraph}
If $S=S_\Gamma$ is the collision structure associated to a graph $\Gamma$ with vertex set $I$ according to Example \ref{example:graph}, then $S_\Gamma|_{I_0}$ is the collision structure associated to the full subgraph spanned by $I_0$. 
\end{example}

\begin{example}\label{example:full subcomplex}
If $S=S_K$ is the collision structure associated to a simplicial complex $K$ with vertex set $I$ according to Example \ref{example:complex}, then $S_K|_{I_0}$ is the collision structure associated to the full subcomplex spanned by $I_0$.
\end{example}

Write $\Surj(I,n)$ for the set of surjections of $I$ onto the set $\{1,\ldots, n\}$; in other words, $\Surj(I,n)$ is the set of \emph{ordered} partitions of $I$ with $n$ blocks.

\begin{lemma}\label{lem:final inclusion}
Let $S$ be a collision structure on $I$. For every $n\geq0$, there is a canonical inclusion \[\Surj(I,n)\subseteq (\sqcup_{\CSB}^{(n)}\downarrow S),\] which is final if $I$ is non-empty.
\end{lemma}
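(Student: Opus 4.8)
The plan is to build the inclusion explicitly from restrictions of $S$, and then to prove finality by exhibiting, for each object of the comma category, a unique map into the image of the functor.

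First I would define the functor $u\colon\Surj(I,n)\to(\sqcup^{(n)}_{\CSB}\downarrow S)$ on objects. A surjection $\sigma\colon I\to\{1,\dots,n\}$ is the same datum as an ordered partition $I=\bigsqcup_{j=1}^n A_j$ with $A_j=\sigma^{-1}(j)$, and I would send it to $\bigl((S|_{A_j})_{j=1}^n,\ f_\sigma\bigr)$, where $f_\sigma\colon\bigsqcup_j A_j\xrightarrow{\sim}I$ is the tautological bijection assembling the inclusions $A_j\hookrightarrow I$. To see this is a legitimate object, I must check that $f_\sigma$ is a map of collision structures $\bigsqcup_j S|_{A_j}\to S$. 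Since $S|_{A_j}=\iota_{A_j}^*S$ and $f_\sigma\circ\iota_{A_j}$ is the inclusion $A_j\hookrightarrow I$, each generator of the domain pushes into $S$ by the very definition of $\iota_{A_j}^*S$, and $S$ is upward closed, so the containment holds. Distinct surjections give distinct ordered partitions, hence distinct tuples of underlying sets and distinct objects, so $u$ is injective on objects; as $\Surj(I,n)$ is discrete, this realizes it as a (discrete) subcategory, which is the asserted inclusion. I would also record the evident compatibility with the $\Sigma_n$-action permuting $\{1,\dots,n\}$, under which $\Surj(I,n)$ is moreover a \emph{free} $\Sigma_n$-set, for use in the separability statements.

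For finality I would fix an object $b=\bigl((T_j)_{j=1}^n,\ \phi\colon\bigsqcup_j B_j\xrightarrow{\sim}I\bigr)$ and analyze $(b\downarrow u)$. A morphism $b\to u(\sigma)$ is a tuple of bijective maps of collision structures $g_j\colon T_j\to S|_{A_j}$ with $f_\sigma\circ(\bigsqcup_j g_j)=\phi$. Because $f_\sigma$ restricts to the inclusion on each summand, this equation forces $\phi(B_j)=A_j$ and $g_j=\phi|_{B_j}$; thus the ordered decomposition underlying $b$ pins down $\sigma$ uniquely. It then remains to observe that the forced bijection $\phi|_{B_j}$ is automatically a map of collision structures $T_j\to S|_{A_j}$: this is precisely the restriction to the $j$-th summand of the hypothesis that $\phi$ maps $\bigsqcup_j T_j$ into $S$, using $S|_{A_j}=\iota_{A_j}^*S$ once more. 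Hence $(b\downarrow u)$ has a unique object and is therefore nonempty and connected, which is finality. I would isolate the compatibility of $f^*$ and $f_*$ for inclusions used twice above as a one-line auxiliary observation.

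The step I expect to be the genuine obstacle is the treatment of objects having an empty block $B_j=\varnothing$: for such an object the forced function $I\to\{1,\dots,n\}$ is not surjective, so on the nose no morphism to any $u(\sigma)$ exists, and the corresponding component is missed by $u$. The hypothesis $I\neq\varnothing$ is what is needed here, and I would handle the point by working with the reduced convolution governing the coalgebra structure of interest, in which an empty block contributes the monoidal unit $\mathbb{1}$ and so is absorbed in the ambient additive tensor category; concretely, the surviving strata of the relevant diagram over $(\sqcup^{(n)}_{\CSB}\downarrow S)$ are exactly those with all blocks nonempty, and these are indexed by $\Surj(I,n)$ with each object receiving the unique terminal reduction computed above. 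Verifying that this reduction is compatible with the collision-structure bookkeeping—so that the indexing is precisely $\Surj(I,n)$, with its free $\Sigma_n$-action—is the crux of the argument.
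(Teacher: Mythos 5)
Your construction of the inclusion and your unique-forced-morphism argument for objects all of whose blocks are non-empty are exactly the paper's proof: the paper sends $p\in\Surj(I,n)$ to the map $\bigsqcup_{j=1}^n S|_{p^{-1}(j)}\to S$ given by the canonical bijection $\bigsqcup_{j=1}^n p^{-1}(j)\cong I$, and, given an object $f:\bigsqcup_{j=1}^n S_j\to S$ with $S_j$ on $I_j$, defines $p_f(i)$ to be the unique index $j$ with $i\in f(I_j)$, then checks that the forced components are maps of collision structures and that no $p\neq p_f$ receives a morphism. The obstacle you isolate in your final paragraph is genuine, and---this is the important point---it is equally a gap in the paper's own proof: when some $I_j=\varnothing$, the function $p_f$ is not surjective, so $p_f\notin\Surj(I,n)$, and since morphisms in $\CSB$ are bijections, such an object admits no morphism at all to the image of the inclusion. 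The paper's proof simply never considers these objects.

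However, your proposed repair does not rescue the statement, because the statement is false for every $n\geq2$: the pair $\bigl((S,\varnothing,\dots,\varnothing),\mathrm{can}\bigr)$, where $\varnothing$ denotes the monoidal unit of $\CSB$ and $\mathrm{can}$ the canonical bijection $I\sqcup\varnothing\sqcup\cdots\sqcup\varnothing\cong I$, is an object $b$ of $(\sqcup_{\CSB}^{(n)}\downarrow S)$ with $(b\downarrow u)=\varnothing$; likewise, when $0<|I|<n$ the set $\Surj(I,n)$ is empty while the comma category is not. The hypothesis $I\neq\varnothing$ is irrelevant to this failure, and appealing to ``reduced convolution'' changes the statement rather than proving it---finality is a purely categorical property of the inclusion, independent of any coefficient functors. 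What your forced-morphism argument does prove, applied verbatim to possibly-empty blocks, is that the larger set $\mathrm{Map}(I,\{1,\dots,n\})$ of all ordered decompositions is final: each connected component of the comma category contains its canonical object $\bigl((S|_{p^{-1}(j)})_j,f_p\bigr)$ as a terminal object. The real cost is downstream: $\Sigma_n$ does not act freely on $\mathrm{Map}(I,\{1,\dots,n\})$ for $n\geq3$ (constant maps have stabilizer $\Sigma_{n-1}$), so the word ``freely'' in Corollary \ref{cor:collision separability} must be weakened, and the freeness invoked in Lemma \ref{lem:symmetric powers homotopy invariant} has to be recovered there from reducedness: for reduced functors the summands indexed by non-surjective decompositions vanish, and $\Sigma_k$ does act freely on the surjective ones. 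This is the precise content behind your gesture at reducedness; your instinct for where the repair lives is correct, but as written your last paragraph neither proves the lemma as stated nor exhibits the counterexample showing that no proof exists.
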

\begin{proof}
We define the inclusion by sending $p\in\Surj(I,n)$ to the map $\bigsqcup_{j=1}^n S|_{p^{-1}(j)}\to S$ of collision structures given by the canonical bijection $\bigsqcup_{j=1}^np^{-1}(j)\cong I$. Given collision structures $S_j$ on the finite sets $I_j$ and a bijective map $f:\bigsqcup_{j=1}^nS_j \to S$ of collision structures, define 
$p_f:I\to \{1,\ldots, n\}$ by defining $p_f(i)$ to be the index $j$ such that $i\in f(I_j)$; note that there is precisely one such index by bijectivity. Then $p_f^{-1}(j)=f(S_j)$, and we have the commutative diagram:
\[\xymatrix{
\displaystyle\bigsqcup_{j=1}^n I_j\ar[dr]_-f\ar@{-->}[rr]^-{\bigsqcup_{j=1}^n f|_{I_j}}&&\displaystyle\bigsqcup_{j=1}^n p_f^{-1}(j)\ar[dl]^-\cong\\
&I
}\] Finality follows upon noting that the components of the dashed arrow are uniquely determined by commutativity, each component is a map of collision structures, and no such dashed arrow exists for any $p\neq p_f$.
\end{proof}

\begin{corollary}\label{cor:collision separability}
If $S$ is a collision structure on $I$, then $S$ is $n$-separable as an object of $\CSB$ for every $n\geq0$, freely if $I$ is non-empty. In particular, $\CSB$ is sparse.
\end{corollary}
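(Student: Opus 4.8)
The plan is to deduce Corollary \ref{cor:collision separability} directly from Lemma \ref{lem:final inclusion}, unpacking the separability conditions of Definition \ref{def:comma conditions} against the finality statement just established. First I would observe that $n$-separability of $S$ requires a final functor into $(\sqcup_{\CSB}^{(n)}\downarrow S)$ from a finite set (viewed as a discrete category), and Lemma \ref{lem:final inclusion} provides exactly this when $I$ is non-empty, with the finite set being $\Surj(I,n)$. The remaining case is $I=\varnothing$: here $S$ is the (unique, empty) collision structure on the empty set, and I would check separately that the comma category $(\sqcup_{\CSB}^{(n)}\downarrow S)$ has a terminal object—namely the $n$-fold disjoint union of empty collision structures—so that a final functor from the one-point set exists. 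This handles $n$-separability for all $n\geq0$ regardless of whether $I$ is empty, and in particular $2$-separability, giving sparsity of $\CSB$.

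For the \emph{free} $n$-separability claim (assuming $I$ non-empty), the key additional point is that $\Surj(I,n)$ carries a natural $\Sigma_n$-action by postcomposition—permuting the labels $\{1,\ldots,n\}$—and that this action is \emph{free}, since a surjection onto a nonempty set cannot be fixed by any nontrivial relabeling. I would verify that the inclusion of Lemma \ref{lem:final inclusion} is $\Sigma_n$-equivariant with respect to this action and the evident $\Sigma_n$-action on $(\sqcup_{\CSB}^{(n)}\downarrow S)$ permuting the tensor factors; this is essentially bookkeeping, tracking how relabeling a surjection $p$ permutes the summands $S|_{p^{-1}(j)}$ in the associated object. Freeness of the $\Sigma_n$-set $\Surj(I,n)$ then completes the verification of the conditions in Definition \ref{def:comma conditions}(2).

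The main obstacle, such as it is, lies entirely in the degenerate case $I=\varnothing$, where finiteness and finality must be checked by hand rather than inherited from Lemma \ref{lem:final inclusion}; one must confirm that the comma category is not pathological (e.g. that the $n$-fold tensor unit exhibits the empty collision structure in the expected way, so the comma category is nonempty and has the claimed terminal object). Everything else is a routine translation: $n$-separability is the existence of a final functor from a finite set, which Lemma \ref{lem:final inclusion} supplies verbatim, and the equivariance and freeness assertions follow from the transparent combinatorics of relabeling surjections. I would therefore present the corollary as an essentially immediate consequence, spending at most a sentence on the empty case and a sentence confirming freeness of the $\Sigma_n$-action.
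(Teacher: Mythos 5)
Your proposal is correct and follows essentially the same route as the paper: invoke Lemma \ref{lem:final inclusion} for non-empty $I$, note that $\Sigma_n$ acts freely on $\Surj(I,n)$ (with the equivariance of the inclusion) to get free $n$-separability, and handle $I=\varnothing$ separately by observing that $(\sqcup_{\CSB}^{(n)}\downarrow \varnothing)$ is trivial---the paper states it is a singleton, which is exactly your terminal-object observation. You merely spell out details (the freeness of the relabeling action, the equivariance bookkeeping) that the paper leaves implicit.
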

\begin{proof}
Since $\Sigma_n$ acts freely on $\Surj(I,n)$, Lemma \ref{lem:final inclusion} directly implies that any collision structure on a non-empty finite set is freely $n$-separable for every $n\geq0$. For the edge case, we note that $(\sqcup_{\CSB}^{(n)}\downarrow \varnothing)$ is a singleton for every $n\geq0$. 
\end{proof}

\begin{corollary}\label{cor:graph complex separability}
A graph, a simplicial complex, or a finite set is $n$-separable as an object of $\GB$, $\SCB$, or $\FB$, respectively, for every $n\geq0$, freely if non-empty. In particular, all three categories are sparse.
\end{corollary}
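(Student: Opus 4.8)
The plan is to deduce the statement from the corresponding facts for $\CSB$ established in Corollary \ref{cor:collision separability}, transferring them along the inclusions of $\GB$, $\SCB$, and $\FB$. The key mechanism is a simple categorical principle: if a final functor $G\colon A\to B$ factors through a \emph{full} subcategory $B_0\subseteq B$, then the corestriction $G\colon A\to B_0$ is again final. Indeed, finality is tested objectwise on the target, and for $b_0\in B_0$ the comma category $(b_0\downarrow G)$ is the same whether computed in $B_0$ or in $B$: its objects involve the hom-sets $\Hom(b_0,G(a))$, which coincide by fullness since each $G(a)$ lies in $B_0$, while its morphisms are arrows of $A$ in either case. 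Connectedness therefore descends from $B$ to $B_0$.

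First I would record that $\GB$, $\SCB$, and $\FB$ are full subcategories of $\CSB$ closed under disjoint union and containing the unit; fullness and closure follow from Examples \ref{example:graph} and \ref{example:complex} and the compatibility of disjoint unions noted thereafter (with $\FB$ realized by the trivial collision structures). For such a subcategory $\C_0\subseteq\CSB$ and an object $S\in\C_0$, the comma category $(\sqcup_{\C_0}^{(n)}\downarrow S)$ is precisely the full subcategory of $(\sqcup_{\CSB}^{(n)}\downarrow S)$ spanned by the decompositions whose factors all lie in $\C_0$. The crux---and the step I expect to carry the real content---is to verify that the final functor $\Surj(I,n)\to(\sqcup_{\CSB}^{(n)}\downarrow S)$ of Lemma \ref{lem:final inclusion} factors through this full subcategory. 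This is exactly what Examples \ref{example:full subgraph} and \ref{example:full subcomplex} are for: the functor sends $p$ to the decomposition $\bigsqcup_{j=1}^n S|_{p^{-1}(j)}\to S$, and each restriction $S|_{p^{-1}(j)}$ is the collision structure of the full subgraph (resp. full subcomplex) on $p^{-1}(j)$, hence an object of $\GB$ (resp. $\SCB$); for $\FB$ one needs only that a restriction of a trivial collision structure is trivial.

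Combining the two paragraphs, the corestricted functor $\Surj(I,n)\to(\sqcup_{\C_0}^{(n)}\downarrow S)$ is final, which is $n$-separability. The $\Sigma_n$-action on $\Surj(I,n)$ and the equivariance of the functor survive corestriction untouched, so for non-empty $I$ the source is a finite free $\Sigma_n$-set and we obtain free $n$-separability; the empty object is dispatched as in Corollary \ref{cor:collision separability}, the comma category $(\sqcup_{\C_0}^{(n)}\downarrow\varnothing)$ remaining a singleton. Sparsity is then the case $n=2$ of Definition \ref{def:comma conditions}.
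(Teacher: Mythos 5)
Your proof is correct and follows essentially the same route as the paper: factor the final functor $\Surj(I,n)\to(\sqcup_{\CSB}^{(n)}\downarrow S)$ of Lemma \ref{lem:final inclusion} through the comma category over the subcategory (via Examples \ref{example:full subgraph} and \ref{example:full subcomplex}, trivially for $\FB$), then use fullness to conclude that the corestriction is still final. You simply spell out the details the paper leaves implicit---the justification of the fullness-implies-finality principle, the equivariance, and the empty edge case.
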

\begin{proof}
First, the inclusion of $\Surj(I,n)$ of Lemma \ref{lem:final inclusion} factors through $(\sqcup_{\GB}^{(n)}\downarrow \Gamma)$, $(\sqcup_{\SCB}^{(n)}\downarrow K)$, or $(\sqcup_{\FB}^{(n)}\downarrow I)$, respectively. For the first two, this claim follows from Examples \ref{example:full subgraph} and \ref{example:full subcomplex}, and for the third it is obvious. Since these three subcategories are full, each inclusion is also final.
\end{proof}

Combining these corollaries with Corollary \ref{cor:right convolution}, we see that it is sensible to contemplate right Day convolution in this context. Corollary \ref{cor:day tensor power} and Lemma \ref{lem:final inclusion} yield the following formula for this tensor product.

\begin{corollary}\label{cor:collision day formula}
Let $\D$ be an additive tensor category. Given functors $F_j:\CSB^\mathrm{op}\to \D$ for $1\leq j\leq n$ and a collision structure $S$ on $I$, there is a canonical natural isomorphism 
\[
\left(\bigotimes_{j=1}^n F\right)(S)\cong \bigoplus_{I=\bigsqcup_{j=1}^n I_j} \bigotimes _{j=1}^nF(S|_{I_j}).
\]
\end{corollary}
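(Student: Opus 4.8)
The plan is to read the value of the $n$-fold tensor product off the Kan-extension formula of Corollary \ref{cor:day tensor power}, transported to the right-handed setting, and then to collapse the indexing comma category using the finality established in Lemma \ref{lem:final inclusion}. First I would unwind the definition of the right convolution. By Corollary \ref{cor:right convolution}, the monoidal structure on $\Fun(\CSB^\mathrm{op},\D)$ is obtained from the left convolution on $\Fun(\CSB,\D^\mathrm{op})$ under the identification $\Fun(\CSB^\mathrm{op},\D)\cong\Fun(\CSB,\D^\mathrm{op})^\mathrm{op}$. Since $\CSB$ is sparse (Corollary \ref{cor:collision separability}), the hypotheses of Corollary \ref{cor:day tensor power} hold for $\CSB$ and $\D^\mathrm{op}$, exactly as in the proof of Corollary \ref{cor:right convolution}, so the $n$-fold product is the left Kan extension of $\otimes_{\D^\mathrm{op}}^{(n)}\circ(F_j)_{j}$ along $\otimes_{\CSB}^{(n)}=\sqcup^{(n)}$. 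Evaluating the pointwise formula at $S$ presents $(\bigotimes_{j} F_j)(S)$ as the colimit, computed in $\D^\mathrm{op}$, of the functor $(\vec S,\phi)\mapsto\bigotimes_{j}F_j(S_j)$ over the comma category $(\sqcup_{\CSB}^{(n)}\downarrow S)$. Here I must track variance carefully: it is a colimit in $\D^\mathrm{op}$ over this comma category that computes the value, and the underlying tensor-product functor of $\D^\mathrm{op}$ agrees with that of $\D$ on objects.

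Next I would invoke Lemma \ref{lem:final inclusion}, which for nonempty $I$ exhibits $\Surj(I,n)\subseteq(\sqcup_{\CSB}^{(n)}\downarrow S)$ as a final subcategory. Finality is precisely the condition allowing me to restrict the colimit along this inclusion without changing its value, so the colimit collapses to one indexed by the discrete finite set $\Surj(I,n)$; along the inclusion, the functor evaluates to $p\mapsto\bigotimes_{j}F_j(S|_{p^{-1}(j)})$ by the explicit form of the inclusion recorded in the lemma. A colimit over a discrete finite set in $\D^\mathrm{op}$ is a finite coproduct in $\D^\mathrm{op}$, hence a finite product in $\D$; since $\D$ is additive and the index set is finite, this product coincides with the finite direct sum. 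Reindexing $\Surj(I,n)$ as ordered decompositions $I=\bigsqcup_{j=1}^n I_j$ via $I_j=p^{-1}(j)$ then yields the asserted formula.

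The edge case $I=\varnothing$ I would treat separately, using the observation from Corollary \ref{cor:collision separability} that $(\sqcup_{\CSB}^{(n)}\downarrow\varnothing)$ is a singleton, which matches the unique (all-empty) decomposition on the right-hand side.

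The one genuinely substantive point—and the step I expect to require the most care—is the interplay of opposites and additivity: one must check that passing to $\D^\mathrm{op}$ turns the comma-category colimit into a \emph{limit} in $\D$, that the notion of finality needed to restrict that colimit is exactly the one provided by Lemma \ref{lem:final inclusion}, and that the resulting finite product in $\D$ is identified with the direct sum only because $\D$ is additive (the same additivity used in the proof of Corollary \ref{cor:right convolution}). The remaining verifications—naturality of the isomorphism in $S$ and the identification of the restricted functor with $p\mapsto\bigotimes_{j}F_j(S|_{p^{-1}(j)})$—are routine.
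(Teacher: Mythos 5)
Your proposal is correct and is essentially the paper's own proof: the paper obtains this corollary precisely by combining Corollary \ref{cor:day tensor power} (transported through the opposite-category identification of Corollary \ref{cor:right convolution}) with the finality statement of Lemma \ref{lem:final inclusion}, additivity of $\D$ converting the resulting finite coproduct in $\D^\mathrm{op}$ into a direct sum in $\D$, exactly as you describe. One shared wrinkle worth noting: $\Surj(I,n)$ indexes only decompositions $I=\bigsqcup_{j=1}^n I_j$ with every $I_j$ nonempty, while the displayed sum (and the comma category $(\sqcup_{\CSB}^{(n)}\downarrow S)$ itself, which has connected components whose objects have empty factors, since $\varnothing$ is an object of $\CSB$) includes decompositions with empty blocks, so strictly the final discrete subcategory should consist of all functions $I\to\{1,\ldots,n\}$ rather than surjections---a glitch you inherit from the paper's statement of the lemma rather than introduce yourself.
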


Examining the proof of Corollary \ref{cor:graph complex separability}, we see that the same formula holds for $\GB$ and $\SCB$ (and $\FB$, of course). By Examples \ref{example:full subgraph} and \ref{example:full subcomplex}, this formula may be interpreted geometrically in terms of full subgraphs and full subcomplexes, respectively.

\section{Twisted Koszul duality}\label{section:koszul duality}

The purpose of this section is to explain how the theory of Koszul duality between Lie algebras and cocommutative coalgebras extends to the twisted setting---see Theorem \ref{thm:duality}. Classically, this theory is developed under two key assumptions: first, that the ground ring is a field of characteristic zero; second, that the graded objects in question are sufficiently connected \cite{FelixHalperinThomas:RHT,Quillen:RHT}. These assumptions serve to guarantee the good behavior of divided powers---their homotopy invariance and nilpotence, respectively. In our setting, the same good behavior is guaranteed instead by the presence of a \emph{weighting} with certain natural properties---see Section \ref{section:weights}. Otherwise, the arguments are essentially identical, and we will at times allow ourselves a certain brevity.

We work throughout over a fixed ground field $\mathbb{F}$, a limitation easily overcome by judicious assumptions of flatness.

\subsection{Weights}\label{section:weights} In this section, we introduce a piece of extra structure that will serve as a substitute for connectivity and divisibility. In the next definition, we refer to Definition \ref{def:comma conditions}.

\begin{definition}
Let $\C$ be a monoidal category.
\begin{enumerate}
\item A \emph{weighting} of $\mathcal{C}$ is a lax monoidal functor $w:\C\to \mathbb{Z}_{\geq0}$. The value $w(C)$ is the \emph{weight} of $C$. The pair $(\C,w)$ is a \emph{weighted monoidal category}
\item A \emph{combinatorial monoidal category} is a weighted monoidal category such that $C\in \C$ is freely $n$-separable for every $n\geq0$ whenever $w(C)>0$.
\end{enumerate}
\end{definition}

Concretely, a weighting amounts to the assignment of a non-negative integer $w(C)$ to each $C\in\C$ such that the following inequalities hold for every $C_1,C_2\in\C$:
\begin{enumerate}
\item $w(C_1)\leq w(C_2)$ whenever $\Hom_\C(C_1,C_2)\neq\varnothing$
\item $w(C_1)+w(C_2)\leq w(C_1\otimes C_2)$.
\end{enumerate}

\begin{example}
Recording the cardinality of a finite set determines a weighting of $\FB$, whence of $\CSB$, $\GB$, and $\SCB$. By the results of Section \ref{section:day examples}, these weighted monoidal categories are combinatorial.
\end{example}

\begin{definition}
Let $\C$ be a weighted monoidal category and $\D$ an additive tensor category. We say that a functor $F:\C\to \D$ is \emph{concentrated in weight $\geq k$} if $F(C)=0$ whenever $w(C)<k$. If $F$ is concentrated in weight $\geq 1$, then we say that $F$ is \emph{reduced}.
\end{definition}

This definition is justified by the following simple result.

\begin{lemma}\label{lem:zero tensor}
If $\D$ is an additive tensor category, then $\D$ admits a zero object $0$, and $0\otimes D\cong 0$ for every $D\in\D$.
\end{lemma}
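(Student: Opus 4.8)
The plan is to read off both claims directly from the definition of an additive tensor category, with essentially no computation required. For the first claim, recall that an additive category possesses all finite biproducts; in particular, the empty biproduct is a zero object $0$, simultaneously initial and terminal. Thus the existence of $0$ is part of the meaning of the word ``additive'' and needs no separate argument.

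For the second claim, the key point is that the tensor product is required to distribute over finite coproducts, which is to say that for each $D$ the functor $-\otimes D:\D\to\D$ preserves finite coproducts. First I would invoke the standard fact that a functor between additive categories preserving finite (binary) coproducts is automatically additive, hence preserves the zero object. Applying this to $-\otimes D$ yields $0\otimes D=(-\otimes D)(0)\cong 0$ at once. If one prefers to avoid the detour through additivity, one may observe directly that the zero object, being initial, is the empty coproduct, so that preservation of finite coproducts forces $-\otimes D$ to send it again to the empty coproduct, namely $0$.

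Should one wish to be fully explicit and make no appeal to nullary coproducts, I would argue at the level of morphisms: since $-\otimes D$ is additive it sends zero morphisms to zero morphisms, so applying it to the identity of the zero object — which equals the zero endomorphism $\id_0=0$ — gives $\id_{0\otimes D}=0$. An object whose identity is the zero morphism is itself a zero object, since every morphism out of it factors through $\id=0$ and is therefore zero; hence $0\otimes D\cong 0$. There is no real obstacle here: the statement is a bookkeeping consequence of additivity together with distributivity, recorded only to license the notion of a reduced functor in the sequel.
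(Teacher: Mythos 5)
Your proof is correct and its core route is exactly the paper's: additivity supplies the empty biproduct as a zero object, and distributivity of the tensor product over finite (including nullary) coproducts forces $-\otimes D$ to send the empty coproduct to the empty coproduct, which is precisely the paper's computation $0\otimes D\cong\colim\left(\varnothing\xrightarrow{!}\D\right)\otimes D\cong 0$. The extra routes you offer (deducing additivity of $-\otimes D$ from binary coproduct preservation, or arguing at the morphism level that $\id_{0\otimes D}=0$) are also valid but not needed.
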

\begin{proof}
Additivity guarantees the existence of the biproduct indexed by the empty set, which is to say a zero object. Second, since the tensor product of $\D$ distributes over finite coproducts, we have \[0\otimes D\cong\colim\left(\varnothing\xrightarrow{!} \D\right)\otimes D\cong\colim\left(\varnothing\times\D\xrightarrow{!\times \id} \D\times\D\xrightarrow{\otimes}\D\right)\cong\colim\left(\varnothing\xrightarrow{!} \D\right)\cong0.\]
\end{proof}

The comparison to connectivity made above is justified by the following result. 

\begin{lemma}\label{lem:day connectivity}
Let $\C$ be a sparse weighted symmetric monoidal category. If $F_i:\C^\mathrm{op}\to \D$ is concentrated in weight $\geq k_i$ for $i\in\{1,2\}$, then $F_1\otimes F_2$ is concentrated in weight $\geq k_1+k_2$.
\end{lemma}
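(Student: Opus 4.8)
The plan is to compute the value of $F_1 \otimes F_2$ on an arbitrary object $C$ of weight $w(C) < k_1 + k_2$ and show that it vanishes, using the explicit formula for right Day convolution furnished by the sparsity hypothesis. Since $\C$ is sparse, every object is $2$-separable, so by the dualized form of Corollary \ref{cor:day tensor power} (as set up in the proof of Corollary \ref{cor:right convolution}, where a colimit over $(\otimes_\C \downarrow C)$ becomes a finite coproduct, hence a finite \emph{product} in $\D^\mathrm{op}$, i.e.\ a finite \emph{coproduct} in $\D$), the value $(F_1 \otimes F_2)(C)$ is a finite direct sum of terms of the form $F_1(C_1) \otimes F_2(C_2)$, indexed by the finite set through which $(\otimes_\C^{(2)} \downarrow C)$ receives a final functor — concretely, indexed by (representatives of) factorizations $C_1 \otimes C_2 \to C$.

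First I would fix such a factorization, i.e.\ a summand $F_1(C_1) \otimes F_2(C_2)$, and apply the two defining inequalities of a weighting. The existence of a morphism $C_1 \otimes C_2 \to C$ gives $w(C_1 \otimes C_2) \leq w(C)$ by monotonicity, and superadditivity gives $w(C_1) + w(C_2) \leq w(C_1 \otimes C_2)$. Chaining these yields $w(C_1) + w(C_2) \leq w(C) < k_1 + k_2$. It follows that at least one of the strict inequalities $w(C_1) < k_1$ or $w(C_2) < k_2$ must hold, for otherwise we would have $w(C_1) + w(C_2) \geq k_1 + k_2$, a contradiction.

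Next I would invoke the concentration hypotheses: in the first case $F_1(C_1) = 0$ and in the second $F_2(C_2) = 0$, so by Lemma \ref{lem:zero tensor} the summand $F_1(C_1) \otimes F_2(C_2) \cong 0$ vanishes in either event. Since every indexing summand vanishes and the coproduct is finite (hence a zero object when all summands are zero), the whole direct sum is zero, giving $(F_1 \otimes F_2)(C) = 0$ for all $C$ with $w(C) < k_1 + k_2$; that is precisely the assertion that $F_1 \otimes F_2$ is concentrated in weight $\geq k_1 + k_2$.

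The only genuinely delicate point — and the step I would be most careful about — is the bookkeeping of opposites in identifying the summands: the formula of Corollary \ref{cor:day tensor power} is stated for \emph{left} convolution with colimits, whereas here we need \emph{right} convolution on $\Fun(\C^\mathrm{op}, \D)$, which the proof of Corollary \ref{cor:right convolution} reinterprets as left convolution on $\Fun(\C, \D^\mathrm{op})^\mathrm{op}$. I would verify that under this translation the indexing comma category is still $(\otimes_\C^{(2)} \downarrow C)$ with $C \in \C$ (not $\C^\mathrm{op}$), so that the relevant factorizations are morphisms $C_1 \otimes C_2 \to C$ in $\C$ and the monotonicity inequality applies in the direction stated. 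Everything else is a short, essentially formal consequence of the two weighting inequalities together with Lemma \ref{lem:zero tensor}.
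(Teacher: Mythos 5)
Your proof is correct and takes essentially the same route as the paper: sparsity yields the finite direct-sum formula $(F_1\otimes F_2)(C)\cong\bigoplus_{C_1\otimes C_2\to C}F_1(C_1)\otimes F_2(C_2)$, and chaining the monotonicity and superadditivity inequalities of the weighting shows each summand vanishes when $w(C)<k_1+k_2$. Your explicit appeal to Lemma \ref{lem:zero tensor} and your care with the opposite-category bookkeeping are details the paper leaves implicit, but the argument is the same.
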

\begin{proof}
Let $C\in\mathcal{C}$ be an object with $w(C)<k_1+k_2$. From our assumption of sparsity and the definition of right Day convolution, there is a collection of arrows $C_1\otimes C_2\to C$ such that
\begin{align*}
(F\otimes G)(C)&\cong\bigoplus_{C_1\otimes C_2\to C}F_1(C_1)\otimes F_2(C_2)=0,
\end{align*} so it suffices to show that each summand vanishes. We have the inequality \[k_1+k_2>w(C)\geq w(C_1\otimes C_2)\geq w(C_1)+w(C_2).\] Since weights are non-negative, it follows that $w(C_1)<k_1$ or $w(C_2)<k_2$, whence $F_1(C_1)=0$ or $F_2(C_2)=0$.\end{proof}

The comparison to divisibility made above is justified by the following result.

\begin{lemma}\label{lem:symmetric powers homotopy invariant}
Let $\C$ be a combinatorial symmetric monoidal category and $\tau:F_1\to F_2$ a natural transformation between reduced functors from $\C^\mathrm{op}$ to the category of chain complexes in an Abelian tensor category. If $\tau$ is a quasi-isomorphism in weight $< n$, then $\Gamma^k(\tau)$ is a quasi-isomorphism in weight $< n+k-1$ for every $k>0$. In particular, if $k>1$, then $\Gamma^k(\tau)$ is a quasi-isomorphism in weight $\leq n$.
\end{lemma}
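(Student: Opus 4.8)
The plan is to compute the value of $\Gamma^k(F)=(F^{\otimes k})^{\Sigma_k}$ at an object $C$ explicitly and reduce the statement to a weight count on the tensor factors of the decompositions of $C$. Since invariants are a limit, they are computed pointwise in the functor category, so $\Gamma^k(F)(C)=\bigl(F^{\otimes k}(C)\bigr)^{\Sigma_k}$, with $\Sigma_k$ acting through the symmetry of the right Day convolution. I first dispose of the case $w(C)=0$: iterating Lemma~\ref{lem:day connectivity} shows that $F_1^{\otimes k}$ and $F_2^{\otimes k}$ are concentrated in weight $\geq k\geq1$, so both values vanish and $\Gamma^k(\tau)_C$ is the identity of the zero object. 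Henceforth I assume $w(C)>0$.

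The crux is an explicit description of $\Gamma^k(F)(C)$. Because $\C$ is combinatorial and $w(C)>0$, the object $C$ is freely $k$-separable (Definition~\ref{def:comma conditions}), so $(\otimes_\C^{(k)}\downarrow C)$ receives a $\Sigma_k$-equivariant final functor from a finite free $\Sigma_k$-set $E$. Dualizing Corollary~\ref{cor:day tensor power} in the manner of Corollary~\ref{cor:right convolution}, this yields a $\Sigma_k$-equivariant isomorphism
\[
F^{\otimes k}(C)\cong\bigoplus_{e\in E}\bigotimes_{i=1}^k F(C_i^e),
\]
where $e$ records a morphism $C_1^e\otimes\cdots\otimes C_k^e\to C$ and $\Sigma_k$ permutes the summands freely while permuting tensor factors via the symmetry of $\D$. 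Taking invariants of this finite free permutation object, I group $E$ into $\Sigma_k$-orbits; on each orbit the stabilizer of a representative is trivial, so the invariants collapse onto that single summand. Hence $\Gamma^k(F)(C)\cong\bigoplus_{[e]}\bigotimes_{i=1}^k F(C_i^e)$, indexed by orbit representatives, and $\Gamma^k(\tau)_C$ is the direct sum of the maps $\bigotimes_{i=1}^k\tau_{C_i^e}$. I expect this to be the main obstacle, and it is also the heart of the matter: it is precisely the freeness of the $\Sigma_k$-action (i.e.\ combinatoriality) that makes the divided power homotopically well behaved with no characteristic hypothesis, since invariants of a free permutation object involve no norm or transfer maps.

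It then remains to check each orbit summand. Fix a representative $e$. If some factor $C_i^e$ has weight $0$, then reducedness gives $F_1(C_i^e)=F_2(C_i^e)=0$, so by Lemma~\ref{lem:zero tensor} the summand is $0\to0$, a quasi-isomorphism. Otherwise every $w(C_i^e)\geq1$, and the weighting axioms give
\[
\sum_{i=1}^k w(C_i^e)\leq w\Bigl(\bigotimes_{i=1}^k C_i^e\Bigr)\leq w(C)<n+k-1.
\]
Since each term is at least $1$, no single $w(C_i^e)$ can be as large as $n$, for that would force the sum to be at least $n+(k-1)$; hence every $w(C_i^e)<n$ and every $\tau_{C_i^e}$ is a quasi-isomorphism by hypothesis. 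A tensor product of quasi-isomorphisms of chain complexes is a quasi-isomorphism (the routine Künneth ingredient, valid over the field $\mathbb{F}$, equivalently whenever the tensor product is exact), so each summand map is a quasi-isomorphism, and therefore so is the finite direct sum $\Gamma^k(\tau)_C$. This proves the first assertion, and the ``in particular'' follows at once, since for $k>1$ the condition $w(C)\leq n$ implies $w(C)<n+k-1$.
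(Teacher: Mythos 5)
Your proposal is correct and follows essentially the same route as the paper: dispose of weight $0$ via Lemma \ref{lem:day connectivity}, use free $k$-separability and Corollary \ref{cor:day tensor power} to write $F_i^{\otimes k}(C)$ as a direct sum indexed by a finite free $\Sigma_k$-set, bound the weights of the tensor factors to invoke the hypothesis on $\tau$, and descend to invariants by freeness. Your explicit orbit-representative description of the invariants simply spells out what the paper compresses into ``by freeness, this quasi-isomorphism descends to a quasi-isomorphism on $\Sigma_k$-invariants.''
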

\begin{proof}
Fix $C\in\C$ with $w(C)< n+k-1$. If $w(C)=0$, then $T^k(F_i)(C)=0$ by Lemma \ref{lem:day connectivity}, and there is nothing to show, so assume otherwise. Then $C$ is freely $n$-separable by our assumption on $\C$, so Corollary \ref{cor:day tensor power} grants the existence of a finite, $\Sigma_k$-free set $\{f_r:C_{1,r}\otimes\cdots \otimes C_{k,r}\to C\}$ of morphisms such that \[T^k(F_i)(C)\cong\bigoplus_{r}\bigotimes_{j=1}^k F_i(C_{j,r})\] compatibly with $T^k(\tau)$. Since the $F_i$ are reduced, we may assume without loss of generality that $w(C_{j,r})>0$ for every $j$ and $r$. It then follows from the inequality \[n+k-1> w(C)\geq \sum_{j=1}^k w(C_{j,r})\] that $w(C_{j,r})<n$ for every $j$ and $r$. By assumption, $\tau_{C_{j,r}}$ is a quasi-isomorphism, so $T^k(F_i)(C)$ is a quasi-isomorphism. By freeness, this quasi-isomorphism descends to a quasi-isomorphism on $\Sigma_k$-invariants, completing the proof.
\end{proof}

\begin{remark}
In a previous version of this paper, Lemma \ref{lem:symmetric powers homotopy invariant} was stated with the weaker bound only. We thank the referee for pointing out that the proof establishes the improved bound.
\end{remark}

\subsection{Twisted (co)algebraic structures} We come now to the main definitions.

\begin{definition}
Let $\mathcal{C}$ be a sparse symmetric monoidal category. A $\C$-\emph{twisted Lie algebra} (over $\mathbb{F}$) is a Lie algebra in $\Fun(\C^\mathrm{op},\Ch_\mathbb{F})$, regarded as symmetric monoidal under right Day convolution.
\end{definition}

Similarly, one has the notion of a $\C$-twisted cocommutative coalgebra, and so on. In the case $\C=\FB$, one recovers the classical notion of a twisted (co)algebraic structure.

\begin{remark}
As the reader may know, the term ``Lie algebra'' has several inequivalent definitions. Ultimately, we will specialize to a setting in which all such definitions coincide, so the reader is welcome to imagine that her favorite definition is also ours.
\end{remark}

Given a $\C$-twisted cocommutative coalgebra $K$, we write $\overline{K}$ for the kernel of the counit $K\to \mathbb{1}$.

\begin{lemma}
Let $L$ be a $\C$-twisted Lie algebra and $K$ a $\C$-twisted cocommutative coalgebra. 
\begin{enumerate}
\item The unique degree $-1$ coderivation $\partial_\CE$ of $\Gamma(L[1])$ extending the composite \[\Gamma(L[1])[-1]\to \Gamma^2(L[1])[-1]\xrightarrow{[-,-][1]}L[1]\] squares to zero.
\item The unique degree $-1$ derivation $\partial_\Q$ of $\Lie(\overline K[-1])$ extending the composite \[\overline K[-1]\xrightarrow{\Delta[-1]} \Lie^2(\overline K[-1])[1]\subseteq \Lie(\overline K[-1])[1]\] squares to zero.
\end{enumerate}
\end{lemma}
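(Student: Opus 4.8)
The plan is to treat both parts as the standard equivalence between a (co)derivation squaring to zero and a (co-)Jacobi identity, now transported to the symmetric monoidal category $\Fun(\C^\mathrm{op},\Ch_\mathbb{F})$ equipped with right Day convolution, which exists by Corollary \ref{cor:right convolution}. The only structural inputs are that $\Gamma(L[1])=\bigoplus_{k\geq0}\Gamma^k(L[1])$ is the cofree conilpotent cocommutative coalgebra cogenerated by $L[1]$, and dually that $\Lie(\overline K[-1])$ is the free Lie algebra on $\overline K[-1]$. Both facts are formal and hold over any field; in particular, the direct-sum decomposition by $\Gamma$-weight makes conilpotence automatic, so the characteristic-zero hypothesis of the classical theory plays no role here. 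It is this cofreeness that guarantees the existence and uniqueness of $\partial_\CE$ in the first place: a coderivation of $\Gamma(L[1])$ is the same datum as its corestriction to cogenerators, i.e.\ its composite with the projection $\pi\colon\Gamma(L[1])\to L[1]$.

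For part (1), I would first observe that $\partial_\CE$ lowers the $\Gamma$-weight by exactly one, since its corestriction is supported on the summand $\Gamma^2(L[1])$; the induced component $\Gamma^n(L[1])\to\Gamma^{n-1}(L[1])$ is then the expected symmetrized bracket. Next I would note that $\partial_\CE^2$ is again a coderivation, because the graded commutator of coderivations is a coderivation and $\partial_\CE^2=\tfrac12[\partial_\CE,\partial_\CE]$ for the odd operator $\partial_\CE$. By cofreeness it therefore suffices to check that $\pi\circ\partial_\CE^2=0$. As $\partial_\CE^2$ lowers weight by two, this corestriction is concentrated on the summand $\Gamma^3(L[1])$, where a direct expansion identifies it, up to the Koszul sign produced by the suspension $[1]$, with the Jacobiator $\sum[[-,-],-]$ summed over the three cyclic rotations. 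The graded Jacobi identity for $L$ then yields the vanishing.

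Part (2) is formally dual, and I would argue it by the same mechanism applied to $\Lie(\overline K[-1])$, whose derivations are determined by their restriction to generators $\overline K[-1]$. Here $\partial_\Q$ raises Lie word-length by exactly one, so $\partial_\Q^2$ raises it by two and is again a derivation (graded commutator of derivations); by freeness it is enough to verify that its restriction to generators vanishes. This restriction lands in $\Lie^3(\overline K[-1])$, where the relevant expression is the co-Jacobiator built from $(\Delta\otimes\id)\Delta-(\id\otimes\Delta)\Delta$ together with the cocommutativity of $\Delta$. Coassociativity of the $\C$-twisted cocommutative comultiplication then forces this to vanish. Alternatively, one could deduce (2) from (1) by passing to the opposite of $\Fun(\C^\mathrm{op},\Ch_\mathbb{F})$, under which cocommutative coalgebras become commutative algebras and the two statements are interchanged.

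The hard part will be the sign bookkeeping forced by the suspension $[1]$ and the desuspension $[-1]$: one must verify that the graded Jacobi identity of $L$ (respectively the coassociativity and cocommutativity of $K$) translates precisely into the vanishing of the Jacobiator (respectively the co-Jacobiator) after the décalage. This is the classical suspension-sign computation and is entirely mechanical, but it is the only place where genuine care is needed; everything else reduces to the universal properties of $\Gamma$ and $\Lie$ together with the elementary fact that the (anti)commutator of two (co)derivations is again a (co)derivation.
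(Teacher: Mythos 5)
Your proposal is correct and follows essentially the same route as the paper's proof: the square of an odd (co)derivation is again a (co)derivation, so by cofreeness of $\Gamma(L[1])$ (resp.\ freeness of $\Lie(\overline K[-1])$) it suffices to check the corestriction to cogenerators (resp.\ restriction to generators), which is concentrated on the weight-$3$ component and vanishes by the Jacobi identity (resp.\ coassociativity). One tiny caution: the identity $\partial_\CE^2=\tfrac12[\partial_\CE,\partial_\CE]$ presumes $2$ invertible, whereas the section works over an arbitrary field $\mathbb{F}$; better to verify directly that $\Delta\partial=(\partial\otimes\id+\id\otimes\partial)\Delta$ forces $\Delta\partial^2=(\partial^2\otimes\id+\id\otimes\partial^2)\Delta$, the cross terms cancelling by the Koszul sign, which works in every characteristic.
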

\begin{proof}
We give only a brief outline, the proof being entirely parallel to the corresponding portions of \cite[IV.22(b),(e)]{FelixHalperinThomas:RHT}. For the first claim, since the square of a coderivation of odd degree is again a coderivation, and since $\Gamma(L[1])$ is cofree (as a conilpotent cocommutative coalgebra), it suffices to verify that the composite \[\Gamma^3(L[1])[-2]\to \Gamma^2(L[1])[-1]\to L[1]\] vanishes, which is equivalent to the Jacobi identity. The proof of the second claim is essentially the same save that we appeal instead to coassociativity.
\end{proof}

This result permits the following fundamental definition.

\begin{definition}
Let $L$ be a $\C$-twisted Lie algebra and $K$ a $\C$-twisted cocommutative coalgebra.
\begin{enumerate}
\item The \emph{Chevalley--Eilenberg complex} of $L$ is the $\C$-twisted graded cocommutative coalgebra \[\CE(L)=(\Gamma(L[1]),\partial_\CE+\partial_L).\] Its homology is called the \emph{Lie algebra homology} of $L$ and denoted $H_*^\mathrm{Lie}(L)$.
\item The \emph{Quillen complex} of $K$ is the $\C$-twisted graded Lie algebra \[\Q(K)=(\Lie(\overline K[-1]), \partial_\Q+\partial_K).\]
\end{enumerate}
\end{definition}

\begin{remark}
Some readers may be surprised at the appearance of divided powers rather than symmetric powers in the Chevalley--Eilenberg complex. The two coincide in characteristic zero, and more generally in any setting in which the norm map is an isomorphism on tensor powers (as it will be below).
\end{remark}

These two complexes are closely interrelated.

\begin{lemma}
Let $L$ be a $\C$-twisted Lie algebra and $K$ a $\C$-twisted cocommutative coalgebra. 
\begin{enumerate}
\item The map $\eta:K\to \CE(\Q(K))$ of $\C$-twisted coalgebras induced by the inclusion $\iota: \overline K\cong \overline K[-1][1]\subseteq \Q(K)[1]$ is a chain map.
\item The map $\epsilon:\Q(\CE(L))\to L$ of $\C$-twisted Lie algebras induced by the projection $\pi:\CE(L)[-1]\to L[1][-1]\cong L$ is a chain map.
\end{enumerate}
\end{lemma}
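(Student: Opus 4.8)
The plan is to prove both statements by a single device, exploiting the (co)freeness of the target (resp. source) together with the principle that the failure of a structure-preserving map to commute with differentials is a \emph{relative} (co)derivation. Recall that $\CE(\Q(K))=\Gamma(\Q(K)[1])$ is cofree as a conilpotent cocommutative coalgebra on $\Q(K)[1]$, and that $\Q(\CE(L))=\Lie(\overline{\CE(L)}[-1])$ is free as a Lie algebra on $\overline{\CE(L)}[-1]$. Write $p\colon\CE(\Q(K))\to\Q(K)[1]$ for the corestriction onto cogenerators. By construction the corestriction of $\eta$ is $\iota$, and the restriction of $\epsilon$ to generators is $\pi$; in particular $\epsilon$ is the identity on the weight-one generators $L$ and vanishes on generators of higher weight.

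For statement (1), let $D=\partial_\CE+\partial_{\Q(K)}$ denote the differential of $\CE(\Q(K))$ and $d$ that of $K$. Since $\eta$ is a coalgebra map and $D,d$ are coderivations, a direct diagram chase shows that $\theta:=D\eta-\eta d$ is an $\eta$-coderivation, i.e. satisfies $\Delta\theta=(\eta\otimes\theta+\theta\otimes\eta)\Delta$ with Koszul signs. As the target is cofree, $\theta$ is determined by $p\circ\theta$, so it suffices to prove $p D\eta=p\,\eta d$. Decomposing by $\Gamma$-weight, the weight-preserving differential $\partial_{\Q(K)}$ contributes, through its internal piece $\partial_K$, the term $\iota\circ d$, which matches $p\,\eta d$; thus one must show that the two remaining contributions cancel. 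These are $p\circ\partial_\Q\circ\iota$, where $\partial_\Q$ is \emph{defined} as the corestriction of $\Delta_K$ into $\Lie^2(\overline K[-1])$, and the weight-lowering contribution $p\circ\partial_\CE\circ\eta_2$, in which $\eta_2=(\iota\otimes\iota)\circ\overline\Delta_K$ (since $\eta$ is a coalgebra map) is post-composed with the bracket of $\Q(K)=\Lie(\overline K[-1])$ landing in $\Lie^2$. Both terms are therefore $\overline\Delta_K$ followed by the same map into $\Lie^2$, and the conventions governing the shifts $[\pm1]$ make them cancel. This is precisely the formal reason for the definition of $\partial_\Q$, and it is the crux of the argument.

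Statement (2) is dual. With $d'=\partial_\Q+\partial_{\CE(L)}$ on $\Q(\CE(L))$ and $d_L$ on $L$, the same diagram chase shows $d_L\epsilon-\epsilon d'$ is a relative \emph{derivation} along the Lie-algebra map $\epsilon$, hence, the source being free, determined by its restriction to generators. On weight-one generators $L$ the reduced diagonal of $\CE(L)$ vanishes (the cogenerators are primitive), so $\partial_\Q$ contributes nothing, while $\partial_\CE$ maps $\Gamma^1$ into the kernel of $\pi$; only the internal piece $\partial_L$ survives and gives the tautology $d_L=d_L$. On weight-two generators the surviving terms are $\epsilon\circ\partial_\CE$, which lands in the weight-one generators and is the bracket of $L$, and $\epsilon\circ\partial_\Q$, which brackets the $\epsilon$-images of the two primitive factors of $\overline\Delta_{\CE(L)}$ and so also recovers the bracket of $L$; these cancel by the shift signs. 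On generators of weight at least three, $\partial_\CE$ lands in the kernel of $\pi$ and one factor of $\overline\Delta_{\CE(L)}$ always has weight at least two, so $\epsilon$ annihilates the $\partial_\Q$-contribution; every term vanishes.

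The genuinely delicate point in both parts is not the conceptual reduction but the sign and degree-shift bookkeeping that forces the two quadratic terms to cancel rather than add. Since the lemma immediately preceding this one already defers the analogous computation to \cite[IV.22]{FelixHalperinThomas:RHT}, I would likewise import the classical argument, observing only that it is insensitive to the $\C$-twisting: right Day convolution and the forgetful functor to the pointwise tensor product interact with $\Gamma$, $\Lie$, and the homological suspensions exactly as in the untwisted case, so no new input beyond the formal properties already established is required.
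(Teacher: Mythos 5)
Your proposal is correct and follows essentially the same route as the paper's proof: both reduce the chain-map condition via (co)freeness to a check on (co)generators, using that the discrepancy of a (co)algebra map with the differentials is a relative (co)derivation, and both observe that the resulting identity holds essentially by the definition of $\partial_\Q$ (resp. $\partial_\CE$), deferring the sign bookkeeping to \cite[Thm.~22.9]{FelixHalperinThomas:RHT}. Your weight-by-weight verification in part (2) simply spells out what the paper dismisses as ``similar,'' so no substantive difference remains.
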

\begin{proof}
Again, we give only a brief outline, as the details are parallel to the relevant portions of the proof of \cite[Thm. 22.9]{FelixHalperinThomas:RHT}. Writing $\delta_1$ and $\delta_2$, respectively, for the clockwise and counterclockwise composites in the diagram  \[\xymatrix{
\overline K\ar[d]_-{\partial_K}\ar[r]^-{\eta}&\CE(\Q(K))\ar[d]^-{\partial_\CE+\partial_{\Q(K)}}\\
\overline K\ar[r]^-{\eta}&\CE(\Q(K)),
}\] the first claim is the equality $\delta_1=\delta_2$. Since $\eta$ is a map of coalgebras, the $\delta_i$ are coderivations; therefore, since $\CE(\Q(K))$ is cofree, it suffices to check that the composite \[\overline K\xrightarrow{\delta_i} \CE(\Q(K))\xrightarrow{\pi} \Q(K)[1]\] is independent of $i$, which is essentially a tautology. The proof of the second claim is similar. 
\end{proof}

\subsection{Duality} In order to state the main result, we require one further definition.

\begin{definition}
Let $\C$ be a sparse weighted symmetric monoidal category. We say that a $\C$-twisted Lie algebra is \emph{reduced} if its underlying functor is so. We say that a $\C$-twisted cocommutative coalgebra $K$ is \emph{reduced} if $\overline{K}$ is so.
\end{definition}

The goal of this section is to prove the following analogue of \cite[Thm. 22.9]{FelixHalperinThomas:RHT}.

\begin{theorem}\label{thm:duality}
If $\C$ is combinatorial, then the maps $\eta$ and $\epsilon$ are quasi-isomorphisms on reduced objects.
\end{theorem}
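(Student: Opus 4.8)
The plan is to transcribe the classical argument of \cite[Thm.~22.9]{FelixHalperinThomas:RHT}, using the weighting as a uniform substitute for both the connectivity and the characteristic-zero hypotheses. Since $\C$ is combinatorial, every functor in sight is computed objectwise, and Corollary~\ref{cor:day tensor power} shows that the value at an object $C$ of any power functor $\Gamma^k$, $\Sym^k$, or $\Lie^k$ with $k\geq2$ involves the underlying functor only at objects of \emph{strictly} smaller weight: each tensor factor is reduced, hence of positive weight, and weights add. This makes induction on $N=w(C)$ the natural mechanism. The base cases are immediate: at $w(C)=0$ both sides vanish by reducedness, and at $w(C)=1$ all power functors of arity $\geq2$ vanish by Lemma~\ref{lem:day connectivity}, so $\eta$ and $\epsilon$ reduce to the identity of $\overline K$ (resp.\ $L$).

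I would treat $\eta$ and $\epsilon$ in parallel, the two arguments being interchanged by the Koszul duality between bracket and cobracket and between the functors $\Gamma$ and $\Lie$; for definiteness consider $\eta\colon K\to\CE(\Q(K))$, whose target is $(\Gamma(\Lie(\overline K[-1])[1]),\,\partial_\CE+\partial_\Q+\partial_K)$. Filter by the total number $\ell$ of $\overline K$-letters. One checks that $\partial_K$ (internal to a letter) and $\partial_\CE$ (which brackets two $\Gamma$-slots into a single Lie word) preserve $\ell$, while $\partial_\Q$ (which applies the reduced coproduct to one letter) raises it by one. A word with $\ell$ letters is concentrated in weight $\geq\ell$ by Lemma~\ref{lem:day connectivity}, so at a fixed $C$ only finitely many filtration steps are nonzero and the associated spectral sequence converges. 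Its zeroth page is the Chevalley--Eilenberg complex $(\Gamma(\Lie(\overline K[-1])[1]),\,\partial_\CE+\partial_K)$ of the \emph{free} twisted Lie algebra on $\overline K[-1]$.

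The heart of the matter is the objectwise Koszul duality between the Lie operad and the cocommutative cooperad, which, for the purely operadic differential $\partial_\CE$, identifies the homology of $\Gamma(\Lie(V)[1])$ with $\mathbb{1}\oplus V[1]$. Were we in characteristic zero, the exactness of $\Gamma^k=\Sym^k$ would allow us to feed the remaining internal differential $\partial_K$ through this identification and collapse the page onto the linear term $\overline K$, onto which $\eta$ is the evident isomorphism. Over a general field neither $\Gamma^k$ nor $\Lie^k$ is exact or homotopy invariant, so one cannot compute homology slot by slot, and this is exactly where the weighting intervenes. Because $C$ is freely separable, each power functor is a $\Sigma_k$-(co)invariant of a tensor power under a \emph{free} action, and the argument of Lemma~\ref{lem:symmetric powers homotopy invariant} (stated there for $\Gamma^k$ but valid verbatim for $\Lie^k$ and $\Sym^k$) shows that applying such a functor of arity $k\geq2$ to a map that is a quasi-isomorphism in weight $<N$ yields a quasi-isomorphism in weight $\leq N$. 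Since every nonlinear summand of $\CE(\Q(K))$ at $C$ is built by applying such functors to copies of $\overline K$ at weight $<N$, the inductive hypothesis upgrades through these functors to the conclusion at weight $N$; combined with the operadic collapse this gives that $\eta_C$ is a quasi-isomorphism, closing the induction. The argument for $\epsilon$ is identical after exchanging the roles of $\Gamma$ and $\Lie$ and of the Koszul and internal differentials.

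I expect the passage of the internal differential through the power functors to be the point requiring the most care: it is the sole place where the characteristic-zero argument of \cite{FelixHalperinThomas:RHT} genuinely breaks, and making it work hinges on arranging the weight bookkeeping so that every invocation of a power functor $\Gamma^k$ or $\Lie^k$ with $k\geq2$ falls within the range of Lemma~\ref{lem:symmetric powers homotopy invariant} dictated by the inductive hypothesis. Verifying that the nonlinear terms strictly drop the weight---so that the inductive hypothesis always applies---is what makes the combinatoriality of $\C$ indispensable.
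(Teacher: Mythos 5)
Your treatment of $\eta$ begins exactly as the paper's proof does: your filtration by the number of $\overline K$-letters is the paper's filtration of $\Q(K)$ by bracket length, your convergence remark (words with $\ell$ letters vanish at objects of weight $<\ell$) is the justification the paper leaves implicit, and your identification of the associated graded with $\CE(\Lie(\overline K[-1]))$ matches. The gap is in how you collapse the graded pieces. The statement you call ``the heart of the matter'' is precisely the paper's Lemma \ref{lem:free lie homology}, which is proved there, not cited: the universal enveloping algebra of $\Lie(V)$ is $T(V)$, so $H_*(\CE(\Lie(V)))\cong\mathrm{Tor}_*^{T(V)}(\mathbb{1},\mathbb{1})$, computed from the two-step complex $T(V)\otimes V\to T(V)$. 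Crucially, that argument applies to $V=\overline K[-1]$ \emph{with its internal differential}, so nothing remains to be ``fed through.'' Your substitute mechanism does not work as written. Lemma \ref{lem:symmetric powers homotopy invariant} takes as input a \emph{natural transformation} that is a quasi-isomorphism in a range of weights, and you never exhibit the map it is to be applied to. The natural candidate---a natural quasi-isomorphism between $(\overline K,\partial_K)$ and its homology---need not exist: formality of single complexes over a field does not extend to objects of $\Fun(\C^\mathrm{op},\Ch_\mathbb{F})$, and $\C$ need not even be a groupoid here. Nor can the inductive hypothesis serve: it concerns $\eta$ at objects of lower weight, whereas $\CE(\Q(K))(C)$ is built by applying $\Gamma^k$ and $\Lie^k$ to $\overline K$, not to anything involving $\eta$, so ``the inductive hypothesis upgrades through these functors'' has no referent. (One could repair the logic with a second, double-complex spectral sequence isolating $\partial_\CE$, but its $E_1$-identification then requires the divided-power Koszulness of $\Lie$/$\mathrm{Com}$ over an arbitrary field in the twisted setting---a substantive theorem you leave as a black box, and exactly what the Tor argument replaces. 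Your side remark that Lemma \ref{lem:symmetric powers homotopy invariant} extends to $\Lie^k$ and $\Sym^k$ is fine, but it is not the missing ingredient.)

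The second gap is $\epsilon$. The two halves of the theorem are not related by ``exchanging $\Gamma$ and $\Lie$'': a dual argument would require the dual collapse lemma, that $\Q(\Gamma(L[1]))\to L$ is a quasi-isomorphism, and this neither follows from anything you have established nor by formally dualizing the Tor computation (there is no co-enveloping coalgebra in play). The paper's route is deliberately asymmetric: one checks the triangle identity $\CE(\epsilon)\circ\eta=\id$, concludes that $\CE(\epsilon)$ is a quasi-isomorphism because $\eta$ is one when applied to the reduced coalgebra $\CE(L)$, and then deduces that $\epsilon$ is a quasi-isomorphism because $\CE$ \emph{reflects} quasi-isomorphisms between reduced twisted Lie algebras (Lemma \ref{lem:ce conservative}). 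That reflection statement is where a weight-minimality induction of the kind you gesture at genuinely lives---and there it is legitimate, because the map fed through $\Gamma^k$ is a map of Lie algebras $\varphi\colon L_1\to L_2$, to which Lemma \ref{lem:symmetric powers homotopy invariant} directly applies. Your proposal contains no substitute for this conservativity step, so even granting the operadic black box, the $\epsilon$ half remains unproved.
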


The main ingredients are the following two lemmas.

\begin{lemma}\label{lem:ce conservative}
If $\C$ is combinatorial, then the Chevalley--Eilenberg complex preserves and reflects quasi-isomorphisms between reduced $\C$-twisted Lie algebras.
\end{lemma}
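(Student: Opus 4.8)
The plan is to argue objectwise, exploiting the weight filtration of $\CE(L)$ by divided-power degree together with Lemmas \ref{lem:day connectivity} and \ref{lem:symmetric powers homotopy invariant}. Write $\CE(L)=\bigoplus_{k\geq0}\Gamma^k(L[1])$ and recall that the coderivation $\partial_\CE$ lowers the divided-power degree $k$ by one, while the internal differential $\partial_L$ preserves it. Hence $F_p=\bigoplus_{k\leq p}\Gamma^k(L[1])$ is an exhaustive increasing filtration by subcomplexes whose associated graded is $\bigoplus_k(\Gamma^k(L[1]),\Gamma^k(\partial_{L[1]}))$, carrying only the internal differential. Since $L$ is reduced, $L[1]$ is concentrated in weight $\geq1$, so iterating Lemma \ref{lem:day connectivity} shows $\Gamma^k(L[1])$ is concentrated in weight $\geq k$; consequently, evaluated at any object $C$, the filtration is finite, with $\Gamma^k(L[1])(C)=0$ for $k>w(C)$. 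This finiteness is what makes the filtration spectral sequences converge objectwise and lets one pass between total and associated-graded homology via the five lemma.

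For the \emph{preservation} of quasi-isomorphisms, suppose $\tau\colon L_1\to L_2$ is a quasi-isomorphism of reduced twisted Lie algebras. By Lemma \ref{lem:symmetric powers homotopy invariant} each $\Gamma^k(\tau)$ is then a quasi-isomorphism in every weight, so the induced map $\gr\CE(\tau)$ on associated graded pieces is a quasi-isomorphism. Because the filtration is finite at each object, an induction over filtration length, using the five lemma for the short exact sequences $0\to F_{p-1}\to F_p\to \gr_p\to0$, upgrades this to the assertion that $\CE(\tau)$ is a quasi-isomorphism objectwise.

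The \emph{reflection} of quasi-isomorphisms is the substantive direction and proceeds by induction on weight, the crucial input being the weight \emph{gain} for higher divided powers in Lemma \ref{lem:symmetric powers homotopy invariant}. Stripping off the counit summand $\Gamma^0(L[1])=\mathbb{1}$, which is the same on both sides, one may replace $\CE$ by $\overline{\CE}(L)=\bigoplus_{k\geq1}\Gamma^k(L[1])$; here the degree-one piece $\Gamma^1(L[1])=L[1]$ is a subcomplex, as $\partial_\CE$ vanishes on $\Gamma^1$, giving a natural short exact sequence of complexes
\[
0\to L[1]\to \overline{\CE}(L)\to \textstyle\bigoplus_{k\geq2}\Gamma^k(L[1])\to0.
\]
Now suppose $\CE(\tau)$, equivalently $\overline{\CE}(\tau)$, is a quasi-isomorphism, and assume inductively that $\tau$ is a quasi-isomorphism in weight $<n$, the base case $n=1$ being vacuous by reducedness. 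For $k\geq2$, Lemma \ref{lem:symmetric powers homotopy invariant} then gives that $\Gamma^k(\tau)$ is a quasi-isomorphism in weight $\leq n$; applying the finite-filtration argument of the previous paragraph to the quotient complex $\bigoplus_{k\geq2}\Gamma^k(L[1])$ shows that its comparison map is a quasi-isomorphism at objects of weight $n$. Evaluating the short exact sequence at such an object and comparing long exact sequences, the five lemma forces $L_1[1]\to L_2[1]$, hence $\tau$, to be a quasi-isomorphism in weight $n$. This closes the induction.

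The main obstacle is precisely this inductive step in the reflection direction: the degree-one term of the Chevalley--Eilenberg complex reproduces $\tau$ itself with no weight gain, so the induction can only close because the higher terms $\Gamma^k$ with $k\geq2$ are quasi-isomorphisms in a strictly larger weight range---exactly the improvement recorded in the final sentence of Lemma \ref{lem:symmetric powers homotopy invariant}. One must also take care that the differential genuinely lowers the divided-power degree, so that $\Gamma^1$ is a subcomplex and the filtration behaves as claimed, and that reducedness makes the filtration finite objectwise, which is what legitimizes every appeal to the five lemma.
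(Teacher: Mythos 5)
Your proposal is correct and follows essentially the same route as the paper: preservation via the tensor-degree filtration, Lemma \ref{lem:symmetric powers homotopy invariant}, and the five lemma; reflection via the short exact sequence separating $L[1]$ from the higher divided powers, with the weight-gain clause of Lemma \ref{lem:symmetric powers homotopy invariant} for $k\geq2$ closing the argument. The only differences are cosmetic---you run a direct induction on weight where the paper takes a minimal-weight counterexample (adapting Francis--Gaitsgory), and you justify convergence by objectwise finiteness of the filtration where the paper appeals to direct limits preserving quasi-isomorphisms.
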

\begin{proof}
Let $\varphi:L_1\to L_2$ be a map of $\C$-twisted Lie algebras. Assuming that $\varphi$ is a quasi-isomorphism, Lemma \ref{lem:symmetric powers homotopy invariant} grants that $\Gamma^k(\varphi[1])$ is also a quasi-isomorphism for every $k\geq0$. Filtering the Chevalley--Eilenberg complex by tensor degree, we conclude that $\varphi$ induces a quasi-isomorphism at the level of associated graded complexes. By induction and the five lemma, it follows that $\CE(\varphi)$ is a quasi-isomorphism after restriction to any filtration stage; therefore, since direct limits preserve quasi-isomorphisms, $\CE(\varphi)$ itself is a quasi-isomorphism.

For reflection, we adapt the argument of \cite[4.1.9]{FrancisGaitsgory:CKD}. Assuming that $\CE(\varphi)$ is a quasi-isomorphism, let $C\in\mathcal{C}$ be of minimal weight such that $\varphi_C$ is not a quasi-isomorphism. Since the $L_i$ are reduced, we have $n:=w(C)>0$. Setting $R_i:=\coker(L_i[1]\to \CE(L_i))$, we have the following commutative diagram of chain complexes with exact rows: \[\xymatrix{0\ar[r]& L_1(C)[1]\ar[d]_-{\varphi_C[1]}\ar[r]&\CE(L_1)(C)\ar[d]^-{\CE(\varphi)_C}\ar[r]&R_1(C)\ar[r]\ar[d]&0\\
0\ar[r]& L_2(C)[1]\ar[r]&\CE(L_2)(C)\ar[r]&R_2(C)\ar[r]&0.}\] It suffices to show that the righthand vertical arrow is a quasi-isomorphism; indeed, since the middle arrow is a quasi-isomorphism by assumption, the five lemma then implies that the lefthand arrow is a quasi-isomorphism, a contradiction. To this end, we filter $R_i$ by tensor degree, as above, and observe that the associated graded pieces are of the form $\Gamma^k(L_i[1])$ for $k>1$. By minimality, $\varphi$ is a quasi-isomorphism in weight $<n$, so Lemma \ref{lem:symmetric powers homotopy invariant} implies that $\Gamma^k(L_i[1])$ is a quasi-isomorphism in weight $\leq n$ for $k>1$. In particular, the righthand vertical arrow above induces a quasi-isomorphism at the level of associated graded complexes, and the same argument as before completes the proof.
\end{proof}

\begin{lemma}\label{lem:free lie homology}
The projection $\rho:\CE(\Lie(V))\to V[1]\oplus\mathbb{1}$ is a quasi-isomorphism.
\end{lemma}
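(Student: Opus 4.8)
The target $V[1]\oplus\mathbb{1}$ carries the zero differential, and $\rho$ is the composite of the projection $\CE(\Lie(V))=\Gamma(\Lie(V)[1])\to\mathbb{1}\oplus\Lie(V)[1]$ onto the summands $\Gamma^0$ and $\Gamma^1$ with the projection $\Lie(V)\to V$ of the free Lie algebra onto its generators. The plan is to compute $H_*^{\mathrm{Lie}}(\Lie(V))$ through the universal enveloping algebra, in exact parallel with the classical computation of $\mathrm{Tor}$ over a tensor algebra underlying \cite[\S22]{FelixHalperinThomas:RHT}. Throughout, I would use that combinatoriality of $\C$ forces the relevant norm maps on tensor powers to be isomorphisms, so that divided and symmetric powers agree and a Poincaré--Birkhoff--Witt theorem holds in $\Fun(\C^\mathrm{op},\Ch_\mathbb{F})$ under right Day convolution.

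First I would grade everything by \emph{internal weight}, assigning weight $1$ to each generator in $V$; since the bracket preserves this grading, so does $\partial_\CE$, and $\CE(\Lie(V))$ splits as a direct sum of finite subcomplexes $C^{(m)}$ for $m\geq0$. The summand $C^{(0)}$ is $\Gamma^0=\mathbb{1}$ concentrated in degree $0$, and the only contribution in internal weight $1$ is $\Gamma^1(\Lie^1(V)[1])=V[1]$, since $\Gamma^p$ for $p\geq2$ and $\Gamma^1(\Lie^k(V)[1])$ for $k\geq2$ both have internal weight $\geq2$. On these two summands $\rho$ is visibly an isomorphism onto $\mathbb{1}\oplus V[1]$, so it remains to prove that $C^{(m)}$ is acyclic for every $m\geq2$.

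For the acyclicity I would identify $\CE(\Lie(V))$ with the Koszul-type resolution attached to the enveloping algebra. By PBW, $\U(\Lie(V))$ is the tensor algebra $T(V)=\bigoplus_{n\geq0}V^{\otimes n}$, and the Chevalley--Eilenberg complex computes $\mathrm{Tor}^{T(V)}(\mathbb{1},\mathbb{1})$, with the internal weight refining the $\mathrm{Tor}$-grading. The key structural input is the natural splitting $T(V)\cong\mathbb{1}\oplus(V\otimes T(V))$---every nonempty word has a unique leading letter---which is functorial in $V$ and compatible with Day convolution. This splitting furnishes the length-one free resolution $0\to T(V)\otimes V\to T(V)\to\mathbb{1}\to0$ of the trivial module, whence $\mathrm{Tor}^{T(V)}_0(\mathbb{1},\mathbb{1})=\mathbb{1}$, $\mathrm{Tor}^{T(V)}_1(\mathbb{1},\mathbb{1})=V[1]$, and every higher group---equivalently, every group in internal weight $\geq2$---vanishes. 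This is precisely the acyclicity of $C^{(m)}$ for $m\geq2$, and matching the two surviving groups with $C^{(0)}\oplus C^{(1)}$ identifies the resulting quasi-isomorphism with $\rho$.

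The main obstacle is the twisted PBW step together with the identification of $\CE$ with the enveloping-algebra resolution: one must verify that the classical isomorphism $\U(\Lie(V))\cong T(V)$ and the PBW filtration of the Chevalley--Eilenberg complex remain valid in $\Fun(\C^\mathrm{op},\Ch_\mathbb{F})$ under right Day convolution. This is exactly where combinatoriality enters, since the free-separability hypothesis makes the norm maps isomorphisms, so that symmetric and divided powers coincide and the classical symmetrization arguments transfer verbatim. Once this is in place the computation is formal, and the remaining checks---that the internal weight grading is as described and that the splitting is natural under Day convolution---are routine.
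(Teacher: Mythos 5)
Your proposal is correct, and its computational core coincides with the paper's proof: identify $\U(\Lie(V))$ with the tensor algebra $T(V)$, recognize $H_*^{\Lie}(\Lie(V))$ as $\mathrm{Tor}_*^{T(V)}(\mathbb{1},\mathbb{1})$, and compute the latter from the two-term free resolution $T(V)\otimes V\to T(V)$ of $\mathbb{1}$. The genuine difference lies in how the first identification is justified. You route it through a twisted Poincar\'e--Birkhoff--Witt theorem and declare this the main obstacle, invoking combinatoriality of $\C$ and norm-map isomorphisms to transfer the classical symmetrization arguments. The paper avoids all of this: since adjunctions compose, $\U(\Lie(-))$ is left adjoint to the forgetful functor from twisted associative algebras down to the underlying functor category, hence canonically isomorphic to the free associative algebra functor $T(-)$. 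This formal argument needs no PBW filtration, no symmetrization, and no characteristic, separability, or combinatoriality hypotheses on $\C$; your ``main obstacle'' simply dissolves. Your internal-weight decomposition is likewise correct but redundant, since the resolution having length one already kills all higher $\mathrm{Tor}$ groups at once. The one step that both you and the paper leave implicit is that the divided-power Chevalley--Eilenberg complex really does compute $\mathrm{Tor}$ over the enveloping algebra; that is where any care about $\Gamma$ versus $\Sym$ actually resides, not in the identification $\U(\Lie(V))\cong T(V)$.
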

\begin{proof}
Since adjunctions compose, the universal enveloping algebra of $\Lie(V)$ is the tensor algebra $T(V)$; therefore, the homology of the Chevalley--Eilenberg complex is canonically isomorphic to $\mathrm{Tor}_*^{T(V)}(\mathbb{1},\mathbb{1})$, which may be computed from the two-step complex $T(V)\otimes V\to T(V)$. The claim follows immediately.
\end{proof}

With these results in hand, we complete the proof.

\begin{proof}[Proof of Theorem \ref{thm:duality}]
Filtering $\Q(K)$ by bracket length, the associated graded Lie algebra is $\Lie(\overline K[-1])$, and the induced filtration on $\CE(\Q(K))$ has associated graded object $\CE(\Lie(K[-1]))$. The composite \[\overline K\xrightarrow{\gr(\eta)} \CE(\Lie(\overline K[-1]))\xrightarrow{\rho} \overline K\] is the identity, and $\rho$ is a quasi-isomorphism by Lemma \ref{lem:free lie homology}, so $\gr(\eta)$ is also a quasi-isomorphism. It follows as before that $\eta$ is a quasi-isomorphism. It is easy to check that $\CE(\epsilon)\circ\eta=\id$, so it follows that $\CE(\epsilon)$ is a quasi-isomorphism, and Lemma \ref{lem:ce conservative} implies that $\epsilon$ is also a quasi-isomorphism.
\end{proof}

\begin{remark}
In classical operadic Koszul duality \cite{LodayVallette:AO}, the cobar construction (playing the role of the Quillen complex) fails to preserve quasi-isomorphisms in general, so one obtains instead an equivalence between standard homotopy theory on the algebra side and a nonstandard homotopy theory on the coalgebra side. From this point of view, the main point of this section is that $\Q$ \emph{does} preserve quasi-isomorphisms in our setting, which is the essential content of the reflection clause of Lemma \ref{lem:ce conservative}.
\end{remark}

\section{Projection spaces}

We arrive at last at our primary object of study. After cataloguing examples of projection spaces of interest, we define the functor of rational primitives and prove Theorems \ref{thm:main} and \ref{thm:stability} as stated in the introduction.

\subsection{Examples}\label{section:projection examples} We recall that, according to Definition \ref{def:projection space}, a projection space (over $\C$) is a topological presheaf on $\Tr(\C)$. We begin with the most basic example.

\begin{example}\label{example:products}
Viewing a finite set as a discrete topological space defines a functor $\FI\to \Top$. By restriction along this functor, any presheaf of spaces on $\Top$ determines a projection space over $\FB$ (recall Proposition \ref{prop:fi}); in particular, this construction applies to the representable presheaf $\Map(-,X)$ for any topological space $X$. Under the identification $\Map(I,X)\cong X^I$, one checks that the structure map associated to the injection $f:I\to J$ is the projection $\pi_f:X^J\to X^I$, whose $i$th component is the projection onto the factor of $X^J$ indexed by $f(i)$.
\end{example}

Although this example is rather uninteresting, it gives rise to the main motivating example.

\begin{example}
Given a finite set, the configuration space of $I$-indexed points in $X$ is defined as the subspace $\Conf_I(X)\subseteq X^I$ of injective functions $I\to X$. Since the restriction of an injection along an injection is again an injection, the dashed filler exists in the diagram
\[\xymatrix{
\Conf_J(X)\ar@{-->}[d]\ar[r]&X^J\ar[d]^-{\pi_f}\\
\Conf_I(X)\ar[r]&X^I,
}\] so the collection of all such configuration spaces inherits the structure of a projection space over $\FB$.
\end{example}

The next two examples generalize the previous two in the presence of an action by a group $G$.

\begin{example}
Applying the considerations of Example \ref{example:products} to the inclusion $\FI_G\to \Top_G$ and the functor $\Map_G(-,X)$ represented by the $G$-space $X$, we obtain a projection space over $\FB_G$ (recall Corollary \ref{cor:groupoid examples}). Concretely, the value of this projection space on the finite free $G$-set $I$ is $X^{I_0}$, where we have written $I\cong G\times I_0$ non-canonically. The structure maps combine the action of $G$ with the projections of Example \ref{example:products}.
\end{example}

\begin{example}\label{example:orbit}
Given a finite free $G$-set $I$, the orbit configuration space of $I$-indexed points in $X$ is defined as the subspace $\Conf_I^G(X)\subseteq \Map_G(I,X)$ of injective equivariant functions. Equivalently, writing $I\cong G\times I_0$ (non-canonically), the orbit configuration space is the subspace of $X^{I_0}$ consisting of tuples with pairwise disjoint orbits, i.e., it is the pullback \[\xymatrix{
\Conf_I^G(X)\ar[d]\ar[r]&X^{I_0}\ar[d]\\
\Conf_{I_0}(X_G)\ar[r]&(X_G)^{I_0}.
}\] Since equivariant injections are closed under composition, the first description endows the collection of orbit configuration spaces with the structure of a projection space over $\FB_G$.
\end{example}

There is also a linear analogue of these examples. We fix a topological field $\mathbb{F}$, the cases of $\mathbb{R}$ and $\mathbb{C}$ being of greatest interest.

\begin{example}\label{example:matrices}
By restricting along the inclusion of $\VI_\mathbb{F}$ into the category of topological vector spaces, a presheaf of spaces on the latter determines a projection space over $\VB_\mathbb{F}$ (recall Corollary \ref{cor:groupoid examples}); in particular this construction applies to the representable presheaf $\Hom_\mathbb{F}(-,W)$ for any topological vector space $W$. Since the restriction of an injection along an injection is again an injection, the dashed filler exists in the diagram \[\xymatrix{
V_n(W)\ar@{-->}[d]\ar[r]&\Hom_\mathbb{F}(\mathbb{F}^n,W)\ar[d]^-{\varphi^\vee}\\
V_m(W)\ar[r]&\Hom_\mathbb{F}(\mathbb{F}^m,W),
}\] where $V_n(W)$ denotes the space of linear injections of $\mathbb{F}^n$ into $W$, which is simply the usual non-compact Stiefel manifold of $n$-frames in the real or complex case.
\end{example}

\begin{remark}
One imagines that our framework expands easily to encompass algebrogeometric examples such as flag varieties in positive characteristic.
\end{remark}

Given a partition $P$ of $I$, we write $\Delta_P=\{(x_i)_{i\in I}: i_1\sim_P i_2\implies x_{i_1}=x_{i_2} \}$. Given a collision structure $S$ on $I$, we set $\Delta_S=\bigcup_{P\in S}\Delta_P$ and define the generalized configuration space \cite{Petersen:CGCS} associated to $S$ as \[\Conf_S(X)=X^I\setminus \Delta_S.\] Notice that, if $P\leq P'$, then $\Delta_{P'}\subseteq \Delta_P$, so $\Delta_S=\bigcup_{P\in S_0}\Delta_P$ for any generating set $S_0\subseteq S$.

\begin{lemma}\label{lem:generalized projection}
Let $S$ and $T$ be collision structures on $I$ and $J$, respectively, and $f:I\to J$ an injective map of collision structures. The dashed filler exists in the diagram
\[\xymatrix{
\Conf_T(X)\ar@{-->}[d]\ar[r]&X^J\ar[d]^-{\pi_f}\\
\Conf_S(X)\ar[r]&X^I.
}\]
\end{lemma}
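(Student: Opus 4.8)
The plan is to observe that both horizontal maps are subspace inclusions and $\pi_f$ is continuous, so the dashed filler exists as a continuous map if and only if $\pi_f$ carries $\Conf_T(X)$ into the subspace $\Conf_S(X)\subseteq X^I$; once the set-theoretic containment $\pi_f(\Conf_T(X))\subseteq \Conf_S(X)$ is established, the filler is forced to be the (unique, continuous) corestriction of $\pi_f$. Thus the entire content reduces to a statement about the forbidden loci. I would phrase it contrapositively: it suffices to prove $\pi_f^{-1}(\Delta_S)\subseteq \Delta_T$, since then any $x\in X^J\setminus\Delta_T$ automatically satisfies $\pi_f(x)\notin\Delta_S$.

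To analyze $\pi_f^{-1}(\Delta_S)$, I would first reduce to single partitions. Because $\Delta_S=\bigcup_{P\in S}\Delta_P$ and preimages commute with unions, we have $\pi_f^{-1}(\Delta_S)=\bigcup_{P\in S}\pi_f^{-1}(\Delta_P)$, so it is enough to control each $\pi_f^{-1}(\Delta_P)$. The key computation is the identity $\pi_f^{-1}(\Delta_P)=\Delta_{f_*P}$. Unwinding definitions, a tuple $(x_j)_{j\in J}$ lies in $\Delta_{f_*P}$ precisely when $x_{j_1}=x_{j_2}$ whenever $j_1,j_2$ lie in a common block of $f_*P$; since the blocks of $f_*P$ are the images $f(B)$ of blocks $B$ of $P$ together with the singletons in the complement of the image, and since $f$ is injective, this is exactly the condition that $x_{f(i_1)}=x_{f(i_2)}$ whenever $i_1\sim_P i_2$, i.e. that $\pi_f(x)=(x_{f(i)})_{i\in I}$ lie in $\Delta_P$. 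The complementary singletons contribute only vacuous conditions.

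With this identity in hand, the conclusion follows immediately from the hypothesis that $f$ is a map of collision structures. Indeed, each $f_*P$ is by definition one of the generators of the collision structure $f_*S$, so $f_*P\in f_*S\subseteq T$, whence $\Delta_{f_*P}\subseteq \bigcup_{Q\in T}\Delta_Q=\Delta_T$. Combining, $\pi_f^{-1}(\Delta_S)=\bigcup_{P\in S}\Delta_{f_*P}\subseteq \Delta_T$, which is exactly the containment sought above.

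I do not anticipate a genuine obstacle: the proof is a direct unwinding of definitions. The one point demanding care is the preimage identity $\pi_f^{-1}(\Delta_P)=\Delta_{f_*P}$, where one must correctly match the block structure of the pushforward partition $f_*P$ (images of blocks, plus complementary singletons) against the coordinates collapsed by $\pi_f$, and note that it is precisely the injectivity of $f$ that makes the correspondence of conditions exact. Everything else is formal, and the role of the standing hypothesis $f_*S\subseteq T$ is exactly to convert this partition-level identity into the desired containment of forbidden loci.
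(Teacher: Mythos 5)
Your proposal is correct and follows essentially the same route as the paper: reduce the filler to the containment $\pi_f^{-1}(\Delta_S)\subseteq\Delta_T$, decompose over partitions, establish the key identity $\pi_f^{-1}(\Delta_P)=\Delta_{f_*P}$ via injectivity of $f$, and invoke $f_*S\subseteq T$. The only cosmetic difference is that the paper assembles $\bigcup_{P\in S}\Delta_{f_*P}$ into $\Delta_{f_*S}$ using the generating-set observation before comparing with $\Delta_T$, whereas you compare each $\Delta_{f_*P}$ with $\Delta_T$ directly; the two are interchangeable.
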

\begin{proof}
By our assumption on $f$, we have the containment $\Delta_{f_*S}\subseteq\Delta_{T}$. On the other hand, we calculate that \begin{align*}\pi_f^{-1}(\Delta_{S})&=\bigcup_{P\in S}\pi_f^{-1}(\Delta_{P})\\
&=\bigcup_{P\in S}\{(x_j)_{j\in J}: i_1\sim_{P} i_2\implies x_{f(i_1)}=x_{f(i_2)}\}\\
&=\bigcup_{P\in S} \{(x_j)_{j\in J}: f(i_1)\sim_{P}f(i_2)\implies x_{f(i_1)}=x_{f(i_2)}\}\\
&=\bigcup_{P\in S}\{(x_j)_{j\in J}: j_1\sim_{f_*P} j_2\implies x_{j_1}=x_{j_2}\}\\
&=\bigcup_{P\in S}\Delta_{f_*P}\\
&=\Delta_{f_*S},
\end{align*} where the last equality uses that $\{f_*P: P\in S\}$ is a generating set for $f_*S$ by definition. Thus, we have the containment $\Conf_T(X)\subseteq X^J\setminus \pi_f^{-1}(\Delta_S)=\pi_f^{-1}(\Conf_S(X))$, as desired.
\end{proof}

\begin{example}\label{example:generalized}
Pulling the projection space $I\mapsto X^I$ of Example \ref{example:products} back along the forgetful functor from collision structures to finite sets, we obtain a projection space over $\CSB$ with the same values. Invoking Lemma \ref{lem:generalized projection}, we obtain a projection space over $\CSB$ extending the assignment $S\mapsto \Conf_S(X)$ on objects. 
\end{example}

\begin{remark}
There is an obvious common generalization of Examples \ref{example:orbit} and \ref{example:generalized}, which one would likely dub a generalized orbit configuration space. The ensemble of such also forms a projection space, where one would likely dub the relevant combinatorial object a $G$-collision structure. 
\end{remark}

\begin{example}
Restricting the projection space of Example \ref{example:generalized} to $\GI$, we obtain a projection space over $\GB$ extending the assignment $\Gamma\mapsto \Conf_{S_\Gamma}(X)$ on objects.
\end{example}

\begin{example}
Restricting the projection space of Example \ref{example:generalized} to $\SCI$, we obtain a projection space over $\SCB$ extending the assignment $K\mapsto \Conf_{S_K}(X)$ on objects.
\end{example}


As the reader will easily verify, all of the examples of projection spaces given in this section are reduced.

\subsection{Proofs of the main results} In the following definition, the reader may take the functor $A_*$ to be any symmetric replacement for the oplax monoidal functor rational singular chains. A specific example of such a replacement is given below in Appendix \ref{section:sullivan chains}.

\begin{definition}
Let $\C$ be a sparse symmetric monoidal category and $X$ a projection space over $\C$. The $\C$-twisted Lie algebra of (derived) \emph{rational primitives} of $X$ is the value on $X$ of the composite functor \[
\xymatrix{
L:\Fun(\Tr(\C)^\mathrm{op},\Top)\ar[r]^-{(\ref{cor:translation equivalence})}&\Fun^{\mathrm{oplax}}(\C^\mathrm{op},\Top)\ar[d]^-{A_*}\\
&\Fun^{\mathrm{oplax}}(\C^\mathrm{op},\Ch_\mathbb{Q})\ar[r]^-{(\ref{cor:right convolution})}&\mathrm{Coalg}_\mathrm{Com}(\Fun(\C^\mathrm{op},\Ch_\mathbb{Q}))\ar[d]^-\Q\\
&&\mathrm{Alg}_\mathrm{Lie}(\Fun(\C^\mathrm{op},\Ch_\mathbb{Q})).
}
\]
\end{definition}

\begin{proof}[Proof of Theorem \ref{thm:main}]
Since the isomorphisms of Corollaries \ref{cor:translation equivalence} and \ref{cor:right convolution} do not change the underlying functor, we have
\begin{align*}
H_*^\mathrm{Lie}(L(X))&= H_*(\CE(\Q(A_*(X))))\\
&\cong H_*(A_*(X))\\
&\cong H_*(X;\mathbb{Q})
\end{align*} by Theorem \ref{thm:duality} and Corollary \ref{cor:sullivan chains}.
\end{proof}

We turn now to stability. The prevailing philosophy that has emerged in the wake of the discovery of representation stability is that stability phenomena are the concrete consequences of finite generation results, representation stability itself being equivalent to finite generation over $\FI$ \cite{ChurchEllenbergFarb:FIMSRSG}. Prerequisitely, then, one requires an action.

A feature of the Chevalley--Eilenberg complex is that it is a symmetric monoidal functor, converting products into tensor products. Thus, as explained at greater length in \cite{KnudsenMillerTosteson:ESCS}, a central Lie subalgebra $L_0\subseteq L$ gives rise to an action of $\Gamma(L_0[1])$ on Lie algebra homology, while an Abelian quotient $L\to L_1$ gives rise to an action of $\Sym(L_1[1])$ on Lie algebra cohomology. Through Theorem \ref{thm:main}, these two constructions give rise to a host of actions on the (co)homology of a projection space, each a potential source of stability phenomena. 

\begin{remark}
In the case $\C=\mathbb{Z}_{\geq0}$, as shown in \cite{KnudsenMillerTosteson:ESCS}, these two types of action are united in the action of a single algebra, called a \emph{transit algebra}. It is less clear how to describe their interaction in general, especially when $\C$ is not a groupoid.
\end{remark}

We now show that this potential is often realized in the classical case $\C=\FB$. A functor from this category (or its opposite) is a symmetric sequence, i.e., a list of objects indexed by the non-negative integers (their weights), together with a $\Sigma_k$-action on the $k$th object for each $k\geq0$. We indicate the weight with a subscript, and we write $V(1)$ for the symmetric sequence with $V(1)_1=V$ and $V(1)_k=0$ for $k\neq 1$.

\begin{theorem}\label{thm:general stability}
If $X$ is a reduced $\FI^\mathrm{op}$-space, then $H^*(X)$ is canonically a $\Sym(H^0(X_1)(1))$-module. This module is finitely generated provided L$(X)$ satisfies the following conditions.
\begin{enumerate}
\item $H_i(L(X)_k)$ is finite dimensional for every $i\geq0$ and $k\geq0$.
\item $H_{-1}(L(X)_k)=0$ for $k>1$ and $H_i(L(X))=0$ for $i<-1$.
\item $H_i(L(X)_k)=0$ for $i$ fixed and $k$ sufficiently large.
\end{enumerate}
\end{theorem}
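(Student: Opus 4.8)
The plan is to prove the two assertions of the statement in turn: first the existence of the canonical module structure (which is unconditional), and then its finite generation in each cohomological degree under hypotheses (1)--(3), this being the form of finiteness relevant to representation stability since $R:=\Sym(H^0(X_1)(1))$ is concentrated in cohomological degree $0$. For the module structure I would use that, by Theorem \ref{thm:main} with $\C=\FB$, the homology $H_*(X;\mathbb{Q})$ is an $\FB$-twisted cocommutative coalgebra, so that $H^*(X)$ is a twisted commutative algebra. Its weight-$1$, degree-$0$ component is $H^0(X_1)$, and by freeness the inclusion $H^0(X_1)\hookrightarrow H^*(X)$ extends to a map of algebras $R=\Sym(H^0(X_1)(1))\to H^*(X)$; restricting the regular action along this map gives the asserted $R$-module structure. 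This is the concrete avatar of the action induced by the Abelian quotient of $L(X)$ onto its weight-$1$ part, as in the remark preceding the theorem and in \cite{KnudsenMillerTosteson:ESCS}.

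For finite generation I would fix a cohomological degree $d$ and show $H^d(X)$ is a finitely generated $\FI$-module, whence representation stable by \cite{ChurchEllenbergFarb:FIMSRSG}. Since $R$ is connected (its weight-$0$ part is $\mathbb{Q}$), a graded $R$-module that is finite-dimensional in each weight is finitely generated as soon as its indecomposables $\mathbb{Q}\otimes_R H^d(X)$ are supported in finitely many weights; this is the graded Nakayama criterion, and it is exactly the $p=0$ case of the $\FI$-homology criterion of \cite{ChurchEllenberg:HFM}. Finite-dimensionality in each weight follows from hypothesis (1): by Theorem \ref{thm:main}, $H^d(X)_k$ is computed by $\CE(L(X))_k=\bigoplus_{j\le k}\Gamma^j(L(X)[1])_k$, and the right-convolution formula for divided powers (Corollary \ref{cor:right convolution}, Corollary \ref{cor:day tensor power}) exhibits each summand as a finite-dimensional space built from the $H_i(L(X)_m)$. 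It therefore remains to bound the weight-support of the dual of the cotensor $\mathbb{Q}\,\square_C\,H_d(X)$, where $C:=R^\vee=\Sym(H_0(X_1)(1))$.

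The crux is to identify the derived indecomposables. Working over $\mathbb{Q}$, I would transfer $L(X)$ across its homology to equip $W:=H_*(L(X))$ with an $L_\infty$-structure; since $W$ is reduced and $\FB$ is combinatorial, this leaves $\CE$ unchanged up to quasi-isomorphism by Lemmas \ref{lem:symmetric powers homotopy invariant} and \ref{lem:ce conservative}. The key observation is that every transferred bracket is weight-additive, so no bracket of arity $\ge 2$ can have output in the weight-$1$ summand $V:=H_{-1}(L(X)_1)=H_0(X_1)$; combined with hypothesis (2) --- which guarantees that $V$ is precisely the degree-$(-1)$ homology and that nothing lies below degree $-1$ --- this shows that the projection $W\to V(1)$ onto the Abelian quotient is a \emph{strict} $L_\infty$-morphism whose kernel $P=W_{\ge0}$ is a sub-$L_\infty$-algebra. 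Consequently $\CE(W)$ is a cofree comodule over $C=\CE(V(1))=\Sym(H_0(X_1)(1))$ with cogenerators $\CE(P)$, and the derived cotensor is computed by the underived one, giving $\mathbb{Q}\,\square^{\mathbb{L}}_C\,H_*(X)\simeq H_*(\CE(P))$. I expect this identification --- the passage to a strict model, and the verification that the stabilization action is free on the nose --- to be the main obstacle.

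Granting the identification, the support bound is a clean count. Since $P$ is concentrated in homological degrees $\ge0$, the shift $P[1]$ lives in degrees $\ge1$ and $\Gamma^j(P[1])$ in degrees $\ge j$, so $H_n(\CE(P))$ receives contributions only from $j\le n$, each tensor factor sitting in some $P$-degree $\le n$. By hypothesis (3), choosing $N(n)$ with $H_i(L(X)_k)=0$ for $k>N(n)$ whenever $i\le n$, every such factor has weight $\le N(n)$, whence $H_n(\CE(P))$ is supported in weights $\le B(n):=n\,N(n)$. A comparison via the $\mathrm{Cotor}$-degree spectral sequence $E^2_{p,q}=\mathrm{Cotor}^C_p(\mathbb{Q},H_q(X))\Rightarrow H_{p+q}(\CE(P))$ then bounds the bottom row: differentials out of $E^r_{0,d}$ vanish and differentials into it originate from subquotients contributing to $H_{d+1}(\CE(P))$, so $E^2_{0,d}=\mathbb{Q}\,\square_C\,H_d(X)$ is supported in weights $\le\max\!\big(B(d),B(d+1)\big)$. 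Dualizing shows $\mathbb{Q}\otimes_R H^d(X)$ is finitely supported; together with the finite-dimensionality established above this yields finite generation of $H^d(X)$, hence of $H^*(X)$ in each cohomological degree, completing the proof and (for path-connected values) the deduction of Theorem \ref{thm:stability} via \cite{ChurchEllenbergFarb:FIMSRSG}.
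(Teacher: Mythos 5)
Your first paragraph is fine: the module structure you describe via the free twisted commutative algebra map $\Sym(H^0(X_1)(1))\to H^*(X)$ agrees (up to duality) with the one the paper produces from the Abelian quotient of Lemma \ref{lem:abelian quotient}, and your identification of the sub-$L_\infty$-algebra $P$ (weight-additivity of brackets plus hypothesis (2)) and your weight count on $\CE(P)$ are sound --- indeed that count is essentially the same one the paper performs. The fatal problem is the final spectral-sequence step. In the hyper-derived Cotor spectral sequence, with $p$ the cobar degree and $q$ the internal homological degree, the differentials run $d_r\colon E_r^{p,q}\to E_r^{p+r,\,q+r-1}$: the column $p=0$ \emph{emits} differentials (with targets $\mathrm{Cotor}^r_C(\mathbb{Q},H_{d+r-1}(X))$) and receives none, so $E_\infty^{0,d}$ is a \emph{quotient} of $E_2^{0,d}=\mathbb{Q}\,\Box_C\,H_d(X)$. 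Your claim that ``differentials out of $E^r_{0,d}$ vanish'' is exactly backwards. Consequently, bounding the weight support of the abutment $H_*(\CE(P))$ bounds only $E_\infty^{0,d}$ and says nothing about $E_2^{0,d}$: classes of arbitrarily large weight in $\mathbb{Q}\,\Box_C\,H_d(X)$ could be killed by differentials landing in higher Cotor columns, which involve $H_{d+r-1}(X)$ for all $r\geq 2$, so no induction on $d$ closes the loop; ruling this out amounts to bounding higher Cotor, i.e., essentially to the finite generation you are trying to prove.

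The paper avoids this circularity by running the implication in the direction that Noetherianity supports. It filters the dual Chevalley--Eilenberg complex by word length, so that $E_1\cong\Sym(H^*(L(X))[1])$ is a \emph{free} module over $R=\Sym(H^0(X_1)(1))$ on $\Sym(V)$, where $V=H^*(L(X))[1]/H^0(L(X)_1)(1)[1]$; hypotheses (2) and (3) show, by the same counting you do for $\CE(P)$, that $\Sym(V)$ is finite dimensional in each fixed cohomological degree; and then Noetherianity of $R$ (Snowden, using hypothesis (1)) forces every subquotient of $E_1$ --- hence every later page and the abutment $H^*(X;\mathbb{Q})$ --- to be finitely generated. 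That argument passes from the $E_1$-page forward to the abutment, whereas yours attempts to pass from the abutment backward to an $E_2$-term. If you wish to salvage your derived-indecomposables strategy, you would need the standard killing-generators induction (the bottom degree of the derived cotensor equals the underived cotensor of the bottom homology; kill it by a finitely cogenerated cofree comodule; iterate), not a one-shot spectral sequence comparison --- and this on top of the transfer and semicofreeness claims you already flagged as unproven, making the route substantially heavier than the paper's.
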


It is well known that representation stability is equivalent to finite generation over the free twisted commutative algebra on a single generator in degree $0$ and weight $1$, so Theorem \ref{thm:stability} is a special case of this result. In general, the finite generation of the theorem implies a kind of generalized representation stability \cite{Ramos:GRSFIM}.

\begin{remark}
We briefly contextualize the assumptions of the theorem. The third assumption is that $L(X)$ is \emph{eventually highly connected}; one should view this assumption, which resembles standard hypotheses in the study of homological stability, as the key assumption. The first assumption is simply that each component space is of finite type, as will be the case in most examples of interest. The second assumption, which may at first appear the strangest, is in fact also quite reasonable; indeed, it is not hard to show that it holds whenever $X$ is path connected in each weight, or more generally when $H_0(X)$ is cogenerated cofreely by $H_0(X_1)$, as is the case for configuration spaces. One could weaken this assumption at the cost of a more involved statement.
\end{remark}

\begin{lemma}\label{lem:abelian quotient}
If $X$ is a reduced $\FI^\mathrm{op}$-space, then $L(X)$ admits the Abelian twisted Lie algebra $H_0(X_1)(1)[-1]$ as a canonical quotient.
\end{lemma}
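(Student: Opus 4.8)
Write $K = A_*(X)$, so that $L(X) = \Q(K) = (\Lie(\overline K[-1]), \partial_\Q + \partial_K)$. The plan is to abelianize $L(X)$ and then cut down to the bottom of weight $1$; since the target is Abelian, any quotient onto it must factor through the abelianization, so this is the natural route. Recall that $\partial_\Q$ raises bracket length by one while $\partial_K$ preserves it; as neither lowers bracket length, the graded piece $\bigoplus_{n\geq 2}\Lie^n(\overline K[-1])$ is a differential Lie ideal. The quotient by this ideal is the abelianization of $L(X)$: its underlying functor is $\Lie^1(\overline K[-1]) = \overline K[-1]$, its bracket is zero, and the contribution of $\partial_\Q$ vanishes in the quotient because it lands in bracket length $\geq 2$, leaving the induced differential equal to $\partial_K$, that is, the internal (Sullivan) differential. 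This produces a canonical surjection of twisted Lie algebras $L(X) \to (\overline K[-1], \partial_K)$.

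Next I would project onto weight $1$ and then onto $H_0$. A functor $\FB^\mathrm{op}\to\Ch_\mathbb{Q}$ splits as a product over weights, the morphisms of $\FB$ preserving cardinality; hence projection onto the weight-$1$ summand is a surjection of functors onto $(\overline K[-1])_1 = A_*(X_1)[-1]$, and since this target is concentrated in weight $1$ its bracket is forced to vanish, so the projection is a surjection of Abelian twisted Lie algebras. As $A_*$ takes values in non-negatively graded complexes (Appendix \ref{section:sullivan chains}), the complex $A_*(X_1)[-1]$ is concentrated in degrees $\geq -1$, and its bottom homology is $H_{-1}(A_*(X_1)[-1]) = H_0(X_1)$. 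Projecting $A_*(X_1)[-1]$ onto this cokernel and killing all higher degrees is a chain map, because the target carries the zero differential and the image of the incoming differential is exactly what has been quotiented out; this gives a surjection $A_*(X_1)[-1](1)\to H_0(X_1)(1)[-1]$ of Abelian twisted Lie algebras.

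Composing the three surjections exhibits $H_0(X_1)(1)[-1]$ as a canonical Abelian quotient of $L(X)$. The verifications are routine, and the only point requiring genuine care is the bookkeeping of the suspension $[-1]$, which places $H_0(X_1)$ in homological degree $-1$ so that the bottom homology of the weight-$1$ part of $L(X)$ matches $H_0(X_1)(1)[-1]$ exactly; after abelianization every map in sight is automatically a morphism of Abelian twisted Lie algebras as soon as it is a chain map respecting the weight grading, so no further algebraic structure need be checked.
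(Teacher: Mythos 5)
Your proof is correct, and under the lemma's hypotheses the quotient map you build is the same one the paper constructs: project onto the weight-one piece $A_*(X_1)(1)[-1]$ and then onto $H_0(X_1)(1)[-1]$. The factorization and the key justification differ, however. The paper never forms the abelianization: it observes directly that $L(X)_1\cong A_*(X_1)[-1]$ as chain complexes, extends by zero, and argues that this is a chain map and a Lie map because $L(X)$ is reduced and weight is additive under the bracket, so every bracket (and the image of $\partial_\Q$) lies in weight $\geq 2$ and is killed by the projection. You instead exploit the bracket-length grading: $\bigoplus_{n\geq 2}\Lie^n(\overline K[-1])$ is a differential Lie ideal because neither $\partial_\Q$ nor $\partial_K$ lowers bracket length, and you then project the abelianization $(\overline K[-1],\partial_K)$ onto weight one and onto bottom homology. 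Your route buys a little: it is modular (any map to an Abelian Lie algebra must factor through the abelianization, as you note), and it nowhere invokes reducedness of $X$, so it is marginally more general than the paper's argument; the paper's route is more economical, with one fewer intermediate object and a one-line justification. Both arguments are complete, including the point each must check at the end, namely that projecting $A_*(X_1)[-1]$ onto its degree $-1$ homology $H_0(X_1)$ is a chain map onto a complex with zero differential.
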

\begin{proof}
We begin by observing the canonical isomorphism $L(X)_1\cong A_*(X_1)[-1]$ of chain complexes. Extending by zero, we obtain the composite map \[L(X)\to A_*(X_1)(1)[-1]\to H_0(X_1)(1)[-1],\] where the second composition factor is the projection to the quotient. This map is clearly surjective, and it is a map of Lie algebras; indeed, $L(X)$ is reduced, and weight is additive under the bracket.
\end{proof}

\begin{proof}[Proof of Theorem \ref{thm:general stability}]
The Chevalley--Eilenberg complex is the total complex of a bicomplex, whose differentials reflect the internal differential of the Lie algebra and the Lie bracket, respectively. One of the two spectral sequences associated to the dual of this bicomplex has the form
\[E_1\cong \Sym(H^*(L(X))[1])\implies H^*(X;\mathbb{Q})\] (we use our first assumption to guarantee that the dual of the divided power is the symmetric power). This spectral sequence is a spectral sequence of $\Sym(H^0(X_1)(1))$-modules by functoriality, since the action arises from a map of Lie algebras. As such a module, the $E^1$-page is freely generated by $\Sym(V)$, where \[V=\frac{H^*(L(X))[1]}{H^0(L(X_1)(1))[1]}.\] By our first assumption, $H^0(X_1)$ is finite dimensional, so $\Sym(H^0(X_1)(1))$ is Noetherian \cite{Snowden:SSEDM}; thus, it suffices to show that $\Sym(V)$ is finite dimensional in fixed degree $i$, for which we calculate that
\[
\Sym(V)_{k,i}=\bigoplus_{r\geq0}\left(\bigoplus_{k_1+\cdots+k_r=k}\bigoplus_{i_1+\cdots+i_r=i}\Ind_{\prod_{j=1}^r \Sigma_{k_j}}^{\Sigma_k}\bigotimes_{j=1}^r V_{k_j,i_j}\right)_{\Sigma_r}.\] For fixed $k$, this expression is finite dimensional by reducedness and our first assumption, so it suffices to show that it vanishes for $k$ sufficiently large. By our third assumption on $L(X)$, there exists $\ell$ sufficiently large so that $V_{k,\leq i}=0$ for $k\geq \ell$; therefore, the summand indexed by $r$ vanishes for $k\geq r\ell$. On the other hand, our second assumption on $L(X)$ implies that $V_{k,i}=0$ for $i\leq0$, so we may take $i_j>0$ for $1\leq j\leq r$ in the above expression. It follows that the summands indexed by $r>i$ all vanish. Combining these observations, we conclude that $\Sym(V)_{k,i}=0$ for $k\geq i\ell$, as desired.
\end{proof}

\begin{remark}
Let $M$ be a (for simplicity) orientable manifold of dimension $n$. According to the conjecture articulated in Section \ref{section:future directions}(2), the homology of the Lie algebra of rational primitives of the ordinary configuration spaces of $M$ can be described in weight $k$ as the vector space $H^{-*}(M)\otimes \mathrm{Lie}(k)[k(n-1)]$, where $\Lie(k)$ is the $k$th Lie representation. As long as $n>1$, this Lie algebra satisfies the assumptions of Theorem \ref{thm:general stability}, so we recover the primordial example of representation stability. This analysis does not actually depend on the validity of the conjecture, since the Chevalley--Eilenberg complex of this Lie algebra does calculate the correct homology.
\end{remark}

\begin{appendix}

\section{Sullivan chains}\label{section:sullivan chains} The cup product arises from the Alexander--Whitney map $C_*(X\times Y)\to C_*(X)\otimes C_*(Y)$, a natural transformation constituting an oplax monoidal structure that is famously not symmetric. Fortunately, the failure of symmetry is governed by highly coherent homotopies, which are strictifiable in characteristic zero for abstract reasons. 

The purpose of this appendix is to present an explicit model for this strictification. In brief, we construct a pre-dual of (a completion of) the functor of Sullivan cochains $A^*$, a reference for which is \cite[II.10(c)]{FelixHalperinThomas:RHT}, where it is denoted $A_{PL}$. It is common to discuss this complex in barycentric coordinates, but the expression in increasing coordinates given in \cite{RichterSagave:SCMCAS} will be more convenient for our purposes.

Write $V_n$ for the chain complex with basis $\{x_1,\ldots, x_n, dx_1,\ldots, dx_n\}$, where $|x_i|=0$. This complex carries a natural simplicial structure via the maps 
\[
\partial_i(x_k)=\begin{cases}
x_k&\quad k\leq i\\
x_{k-1}&\quad k>i
\end{cases}\qquad\qquad s_j(x_k)=\begin{cases}
x_k&\quad k\leq j\\
x_{k+1}&\quad k>j.
\end{cases}
\] The dual $V_n^\vee$ thus carries a natural cosimplicial structure.

\begin{definition} 
Fix $n\geq0$.
\begin{enumerate}
\item The complex of \emph{Sullivan cochains} on the standard $n$-simplex $\Delta^n$ is the commutative differential graded algebra $A^*(\Delta^n)$ freely generated by $V_n$.
\item The complex of \emph{Sullivan chains} on the standard $n$-simplex $\Delta^n$ is the conilpotent cocommutative differential graded coalgebra cofreely cogenerated by $V_n^\vee$.
\item The complex of \emph{completed Sullivan cochains} on $\Delta^n$ is $\widehat A^*(\Delta^n)=A_*(\Delta^n)^\vee,$ regarded as a commutative differential graded algebra.
\end{enumerate}
\end{definition}

Each of these constructions inherits a (co)simplicial structure; indeed, each is obtained by applying a functor pointwise to a (co)simplicial object.

\begin{definition}
Write $\iota:\Delta\to \Fun(\Delta^\mathrm{op},\Set)$ for the Yoneda embedding.
\begin{enumerate}
\item
The functor of \emph{Sullivan cochains} is the left Kan extension $A^*$ in the diagram \[\xymatrix{\Delta\ar[d]_-\iota\ar[r]^-{A^*(\Delta^\bullet)}&\mathrm{Alg}_{\mathrm{Com}}(\Ch_\mathbb{Q})^\mathrm{op}\\
\Fun(\Delta^\mathrm{op}, \mathrm{Set})\ar@{-->}[ur]_-{\quad A^*=\iota_!A^*(\Delta^\bullet)}
}\] (resp. completed Sullivan cochains, $\widehat A^*$).
\item The functor of \emph{Sullivan chains} is the left Kan extension $A_*$ in the diagram 
\[\xymatrix{\Delta\ar[d]_-\iota\ar[r]^-{A_*(\Delta^\bullet)}&\mathrm{Coalg}_{\mathrm{Com}}(\Ch_\mathbb{Q})\\
\Fun(\Delta^\mathrm{op}, \mathrm{Set}).\ar@{-->}[ur]_-{\quad A_*=\iota_!A_*(\Delta^\bullet)}
}\] 
\end{enumerate}
The complex of Sullivan (co)chains on the topological space $X$ is the value of the corresponding functor on the singular set of $X$.
\end{definition}

Note the canonical isomorphism $\widehat A^*\cong A_*^\vee$; indeed, dualization is colimit preserving when viewed as a functor from cocommutative differential graded coalgebras to the opposite category of commutative differential graded algebras.

\begin{lemma}\label{lem:completed quasi}
The map $A^*\to \widehat A^*$ induced by the composite $V_\bullet\to (V_\bullet^\vee)^\vee\to \widehat A^*(\Delta^\bullet)$ is a quasi-isomorphism.
\end{lemma}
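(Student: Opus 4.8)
The plan is to make both complexes completely explicit at the level of the cosimplicial object $\Delta^\bullet$, where the content resides, and then reduce the statement to the algebraic Poincar\'e lemma refined by polynomial weight. Since $A^*(\Delta^n)$ is freely generated by $V_n$ as a commutative differential graded algebra, it is the free graded-commutative algebra $\Sym(V_n)=\bigoplus_{k\geq0}\Sym^k(V_n)$ equipped with the de Rham differential, the derivation extending $x_i\mapsto dx_i$. Dually, $A_*(\Delta^n)$ is the cofree conilpotent cocommutative coalgebra on $V_n^\vee$, whose underlying object is $\bigoplus_{k\geq0}\Gamma^k(V_n^\vee)$; since $V_n^\vee$ is finite dimensional and we work in characteristic zero, dualizing identifies $\widehat A^*(\Delta^n)=A_*(\Delta^n)^\vee$ with the product $\prod_{k\geq0}\Sym^k(V_n)=\widehat{\Sym}(V_n)$, the completed symmetric algebra, with $A^*(\Delta^n)\to\widehat A^*(\Delta^n)$ the tautological inclusion $\bigoplus_k\Sym^k(V_n)\hookrightarrow\prod_k\Sym^k(V_n)$. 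Thus the lemma becomes the assertion that this inclusion is an objectwise quasi-isomorphism of cosimplicial complexes.

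Next I would exploit the weight grading, i.e., total polynomial degree counting each $x_i$ and each $dx_i$ with weight $1$. The de Rham differential preserves weight, so $\Sym^k(V_n)$ is a subcomplex for every $k$, finite dimensional in each homological degree, and the two complexes above are respectively the direct sum and the product of these weight pieces. Homology commutes with arbitrary direct sums, and also with arbitrary products since products are exact in $\Ch_\mathbb{Q}$; therefore it suffices to compute $H_*(\Sym^k(V_n))$ for each $k$ and check that the induced map $\bigoplus_k H_*(\Sym^k(V_n))\to\prod_k H_*(\Sym^k(V_n))$ is an isomorphism. This reduces everything to showing that $\Sym^k(V_n)$ is acyclic for $k\geq1$, while $\Sym^0(V_n)=\mathbb{Q}$; granting this, both homologies collapse to $\mathbb{Q}$ in weight $0$ and the inclusion is the identity there.

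The acyclicity in positive weight is the algebraic Poincar\'e lemma, which I would prove by reduction to one variable. Writing $V_n=\bigoplus_{i=1}^n V_1^{(i)}$ as a sum of acyclic lines gives $\Sym(V_n)\cong\bigotimes_{i=1}^n\bigl(\mathbb{Q}[x_i]\otimes\Lambda(dx_i)\bigr)$, and on each tensor factor the operator $h_i$ determined by $h_i(x_i^k)=0$ and $h_i(x_i^k\,dx_i)=\tfrac{1}{k+1}x_i^{k+1}$ is a contracting homotopy onto the unit, as a one-line check gives $d h_i+h_i d=\id-\epsilon_i$. Since weight is additive under tensor product and the homology of each factor is $\mathbb{Q}$ concentrated in weight $0$, the finite Künneth theorem yields $H_*(\Sym^k(V_n))=0$ for $k\geq1$, as required.

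I expect the only genuinely delicate points to be bookkeeping rather than substance: first, the identification of $A_*(\Delta^n)^\vee$ with the completed symmetric algebra, which uses characteristic zero (to pass between $\Gamma^k$ and $\Sym^k$) and weightwise finite dimensionality; and second, the interchange of homology with the infinite product, which is exactly what the weight grading is there to license. In fact each $h_i$ preserves weight, so the standard tensor-product homotopy $h=\sum_i \epsilon_1\otimes\cdots\otimes h_i\otimes\id\otimes\cdots\otimes\id$ is weight preserving and therefore extends verbatim to the completion $\widehat{\Sym}(V_n)$; this gives an alternative route that contracts both complexes onto $\mathbb{Q}$ compatibly with the inclusion, bypassing the product interchange altogether.
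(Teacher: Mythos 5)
Your simplex-level argument is correct, and it takes a genuinely different route from the paper's. The paper never writes either complex down explicitly: it observes that $0\to V_n$ is a quasi-isomorphism, that symmetric powers preserve quasi-isomorphisms over $\mathbb{Q}$, and that linear duality preserves them as well, concluding that the units of $A^*(\Delta^n)$ and of $\widehat A^*(\Delta^n)=A_*(\Delta^n)^\vee$ are both quasi-isomorphisms; since the map in question is a map of algebras, hence compatible with units, two-out-of-three finishes the simplex level. You instead identify $\widehat A^*(\Delta^n)$ concretely as the completed polynomial de Rham algebra $\prod_{k}\Sym^k(V_n)$, with the map being the inclusion of the direct sum into the product, and run the weight-graded Poincar\'e lemma. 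Your route buys explicitness (including the nice observation that the weight-preserving homotopy contracts the completion as well, so no sum/product interchange is really needed); the paper's route buys brevity and avoids having to justify the identification of the dual of the cofree coalgebra with the completed symmetric algebra.

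There is, however, a genuine gap: the reduction to the cosimplicial level, which you assert in one sentence (``Thus the lemma becomes the assertion that this inclusion is an objectwise quasi-isomorphism of cosimplicial complexes''), is precisely the non-formal step, and it is the one place where the paper needs an outside input. The functors $A^*$ and $\widehat A^*$ in the statement are the left Kan extensions, defined on all simplicial sets, and that is how Lemma \ref{lem:completed quasi} is used in the proof of Theorem \ref{thm:sullivan chains}: the left-hand vertical arrow there is $A^*(K)\to\widehat A^*(K)$ for an arbitrary simplicial set $K$. On the cochain side the Kan extension computes a limit of commutative differential graded algebras over the category of simplices of $K$ (a colimit in the opposite category), and an objectwise quasi-isomorphism of diagrams does not pass through such limits in general. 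The paper fills this with \cite[Prop. 10.5]{FelixHalperinThomas:RHT}, whose key hypothesis is extendability of the simplicial cochain complexes involved. Note also that your explicit homotopies cannot repair this directly: the operators $h_i$ are not compatible with the simplicial structure maps (algebraic integration does not commute with restriction to faces), so they do not assemble into a contraction of $A^*(K)$ or $\widehat A^*(K)$. Your proof becomes complete once you prepend the same first sentence as the paper: by \cite[Prop. 10.5]{FelixHalperinThomas:RHT} it suffices to check the map on each $\Delta^n$, after which your weight-graded argument does the rest.
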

\begin{proof}
By \cite[Prop. 10.5]{FelixHalperinThomas:RHT}, it suffices to check that the induced map $A^*(\Delta^n)\to \widehat A^*(\Delta^n)$ is a quasi-isomorphism for each $n\geq0$. Since the unique map $0\to V_n$ is a quasi-isomorphism, and since symmetric powers preserve quasi-isomorphisms over $\mathbb{Q}$, it follows that the unit of $A^*(\Delta^n)$ is a quasi-isomorphism. Since the linear dual of a quasi-isomorphism is also a quasi-isomorphism, it follows that the unique map $V_n^\vee\to 0$ is a quasi-isomorphism; therefore, after invoking the same property of symmetric powers a second time, it follows that the counit of $A_*(\Delta^n)$ is a quasi-isomorphism. By the same property of the linear dual, it now follows that the unit of $\widehat{A}^*(\Delta^n)$ is a quasi-isomorphism. Since the map in question is compatible with units, being a map of algebras, the claim follows.
\end{proof}

\begin{theorem}\label{thm:sullivan chains}
Let $K$ be a simplicial set and write $N_*$ for the functor of rational normalized chains. There is a natural quasi-isomorphism $A_*(K)\simeq N_*(K)$ of differential graded coalgebras.
\end{theorem}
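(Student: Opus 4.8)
The plan is to leverage the fact that both functors in play are colimit-preserving. By construction $A_*$ is a left Kan extension from $\Delta$, and the forgetful functor from cocommutative coalgebras to $\Ch_\mathbb{Q}$ preserves colimits (being left adjoint to the cofree construction), so the underlying complex of $A_*(K)$ is $\colim_{\Delta^n\to K}A_*(\Delta^n)$; likewise $N_*$ is computed degreewise by the free $\mathbb{Q}$-vector-space functor and hence preserves colimits. Consequently a natural transformation $A_*\Rightarrow N_*$ is exactly the datum of a map of cosimplicial differential graded coalgebras on $\Delta^\bullet$, and naturality in $K$ is automatic. To verify that such a transformation is a quasi-isomorphism for every $K$, I would run a skeletal induction over $K=\colim_n\sk_n K$: each stage $\sk_{n-1}K\hookrightarrow\sk_nK$ is a pushout along a coproduct of boundary inclusions $\partial\Delta^n\hookrightarrow\Delta^n$, both functors send these cell attachments to homotopy pushouts of complexes, and the colimit over $n$ preserves quasi-isomorphisms. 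This reduces the whole statement to the claim on representables.

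On a representable $\Delta^n$ the check is essentially free. The complex $N_*(\Delta^n)$ is quasi-isomorphic to $\mathbb{Q}$ since $\Delta^n$ is contractible, and $A_*(\Delta^n)$ is likewise acyclic by the very computation used in Lemma \ref{lem:completed quasi}: the map $V_n^\vee\to 0$ is a quasi-isomorphism, symmetric (hence divided) powers preserve quasi-isomorphisms over $\mathbb{Q}$, and therefore the counit $A_*(\Delta^n)\to\mathbb{Q}$ is a quasi-isomorphism. Thus both cosimplicial objects are augmented over the constant $\mathbb{Q}$, and \emph{any} augmentation-preserving cosimplicial map between them is automatically a levelwise quasi-isomorphism. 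The entire burden of the proof therefore falls on producing the comparison as a genuine map of differential graded coalgebras respecting the augmentation.

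For this I would dualize the classical picture on the cochain side. The PL de Rham theorem \cite{FelixHalperinThomas:RHT} supplies a natural differential graded \emph{algebra} quasi-isomorphism from $A^*$ to the normalized cochain algebra $N^*$ carrying the Alexander--Whitney cup product; Lemma \ref{lem:completed quasi} identifies $A^*$ with $\widehat{A}^*=A_*^\vee$, and $N^*=N_*^\vee$ since $N_*(\Delta^n)$ is finite-dimensional in each degree. Dualizing turns a map of algebras into a map of coalgebras, under which the cup product becomes the Alexander--Whitney coproduct on $N_*$ and the (strict) commutativity of $A^*$ becomes the cocommutativity of the cofree coalgebra $A_*$. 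Because the comparison is built once and for all on the cosimplicial object $\Delta^\bullet$, it is simultaneously a chain map, a coalgebra map, and natural, and the reduction above then promotes it to a quasi-isomorphism on all of $\sSet$.

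The main obstacle is precisely this construction of a \emph{strict} coalgebra map, and it has two facets. First is a finiteness/double-dual subtlety: $A^*(\Delta^n)$, and hence $A_*(\Delta^n)$, is infinite-dimensional already in degree zero, so one cannot simply dualize the cochain quasi-isomorphism twice and read off the chain statement; the honest route is to exhibit the cosimplicial coalgebra map directly in the increasing coordinates of \cite{RichterSagave:SCMCAS}, using the universal property of the cofree coalgebra $A_*(\Delta^\bullet)$ to define it from an explicit cosimplicial linear evaluation into the cogenerators $V_\bullet^\vee$. Second, and more delicate, is reconciling the cocommutativity of $A_*$ with the non-cocommutative Alexander--Whitney coproduct on $N_*$ inside a single coalgebra homomorphism; I expect this to be the crux, resolved by checking that the evaluation map factors through the cocommutative part up to the prescribed homotopies (equivalently, that the dual cup-product comparison is multiplicative from a commutative source), after which coassociativity and counitality are routine verifications on generators.
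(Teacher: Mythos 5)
Your plan is built on producing a single natural map of differential graded coalgebras between $A_*(\Delta^\bullet)$ and $N_*(\Delta^\bullet)$, and that is exactly what cannot be done; the zig-zag in the paper's proof is forced, not a stylistic choice. In the direction you state, $A_*\Rightarrow N_*$: any dg coalgebra map $f_n\colon A_*(\Delta^n)\to N_*(\Delta^n)$ sends grouplike elements to grouplike elements. Since $A_*(\Delta^n)$ is conilpotent, its unique grouplike is the coaugmentation $1$, while the grouplikes of $N_*(\Delta^n)$ are precisely the vertices (a degree-zero $x$ with $\Delta x=x\otimes x$ and $\epsilon(x)=1$ is necessarily a single vertex). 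So $f_n(1)=v_n$ for some vertex $v_n$, and cosimplicial naturality applied to $1$ forces $v_n=\phi(v_m)$ for every $\phi\colon[m]\to[n]$; already the two cofaces $[0]\rightrightarrows[1]$ give $v_1=d^0(v_0)$ and $v_1=d^1(v_0)$, a contradiction. In the opposite direction --- which is what your appeal to the cofree universal property would actually produce --- maps exist but only degenerate ones: every vertex must map to the unique grouplike $1$, so every $1$-simplex maps to a primitive cycle of degree $1$; the primitives of $A_*(\Delta^n)$ are $V_n^\vee$, which is concentrated in degrees $0$ and $1$ with injective differential in degree $1$, so each $1$-simplex maps to $0$, and induction on degree shows that any natural coalgebra map $N_*(\Delta^\bullet)\to A_*(\Delta^\bullet)$ is the composite of the counit with the coaugmentation. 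This map is objectwise a quasi-isomorphism on simplices (both sides are acyclic), yet its left Kan extension factors through the Kan extension of the constant cosimplicial object $\mathbb{Q}$, namely $K\mapsto\mathbb{Q}[\pi_0K]$, hence vanishes on $H_{>0}$; already on $S^1=\Delta^1/\partial\Delta^1$ it is not a quasi-isomorphism. This example also shows your skeletal induction is not a formality: a natural transformation between colimit-preserving functors that is a quasi-isomorphism on every representable need not Kan-extend to a quasi-isomorphism, and the missing hypothesis --- that $A_*$ takes cell attachments to homotopy pushouts --- is precisely the (dual) extendability input that the paper imports via \cite[Prop. 10.5]{FelixHalperinThomas:RHT}; it cannot simply be asserted.

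Two further inputs you cite do not exist in the form you need. The PL de Rham theorem does not supply a direct natural dg \emph{algebra} quasi-isomorphism $A^*\to N^*$: the natural quasi-isomorphism (integration) is not multiplicative, and the multiplicative comparison of \cite[Thm. 10.9]{FelixHalperinThomas:RHT} is itself a zig-zag through $A^*\otimes N^*$, so there is no direct algebra map to dualize. Likewise, your hoped-for resolution of the cocommutativity clash --- that the evaluation map ``factors through the cocommutative part up to the prescribed homotopies'' --- cannot produce the strict coalgebra map your argument requires: strictly, the subspace of $N_1(\Delta^n)$ on which the Alexander--Whitney coproduct is symmetric is zero (for a $1$-simplex $e$ one has $\Delta e=v_0\otimes e+e\otimes v_1$ with $v_0\neq v_1$), so strict coalgebra maps out of a cocommutative coalgebra have essentially no room in positive degrees, and homotopies cannot repair a strict factorization. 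What the paper does instead, and what your outline is missing, is to compare both sides to the cosimplicial dg coalgebra $A_*(\Delta^\bullet)\otimes N_*(\Delta^\bullet)$ via the two counit-induced projections, which \emph{are} genuine coalgebra maps and objectwise quasi-isomorphisms; it then Kan-extends the whole zig-zag and proves the extended maps remain quasi-isomorphisms by a dualization argument combining Lemma \ref{lem:completed quasi} with \cite[Thm. 10.9]{FelixHalperinThomas:RHT} and the extendability criterion \cite[Prop. 10.5]{FelixHalperinThomas:RHT}.
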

\begin{proof}
The proof closely follows that of \cite[Thm. 10.9]{FelixHalperinThomas:RHT}. At the level of simplices, the respective counits induce the maps \[A_*(\Delta^\bullet)\leftarrow A_*(\Delta^\bullet)\otimes N_*(\Delta^\bullet)\to  N_*(\Delta^\bullet)\] of cosimplicial differential graded coalgebras. Since both are objectwise acyclic, these maps are quasi-isomorphisms. Kan extending, one obtains the zig-zag \[A_*=\iota_!A_*(\Delta^\bullet)\leftarrow \iota_!\left(A_*(\Delta^\bullet)\otimes N_*(\Delta^\bullet)\right)\to  \iota_!N_*(\Delta^\bullet)\cong N_*,\] where the rightmost isomorphism is justified by the observations that colimits of coalgebras are computed at the level of chain complexes; that $N_*$ is colimit-preserving; and that the identity functor is the left Kan extension of the Yoneda embedding along itself. One could argue directly that these maps are quasi-isomorphisms by appealing to a cosimplicial version of \cite[Prop. 10.5]{FelixHalperinThomas:RHT}. Alternatively, consider the following commutative diagram of functors from simplicial sets to commutative differential graded algebras: \[\xymatrix{
\widehat A^*\ar[r]& \iota_!\left(\left(A_*(\Delta^\bullet)\otimes N_*(\Delta^\bullet)\right)^\vee\right)& N^*\ar[l]\\
&\iota_!(\widehat A^*(\Delta^\bullet)\otimes N^*(\Delta^\bullet))\ar[u]\\
A^*\ar[uu]\ar[r]&\iota_!\left(A^*(\Delta^\bullet)\otimes N^*(\Delta^\bullet)\right)\ar[u]&N^*,\ar[l]\ar@{=}[uu]
}\] the top row is the linear dual of the zig-zag of interest, the maps in the bottom row are shown to be quasi-isomorphisms in \cite[Thm. 10.9]{FelixHalperinThomas:RHT}, and the lefthand vertical arrow is a quasi-isomorphism by Lemma \ref{lem:completed quasi}; therefore, since formation of the linear dual reflects quasi-isomorphisms, it suffices to show that the vertical maps in the middle column are quasi-isomorphisms. By \cite[Prop. 10.5]{FelixHalperinThomas:RHT}, it suffices to show that the maps
\[A^*(\Delta^n)\otimes N^*(\Delta^n)\to \widehat A^*(\Delta^n)\otimes N^*(\Delta^n)\to \left(A_*(\Delta^n)\otimes N_*(\Delta^n)\right)^\vee\] are quasi-isomorphisms for each $n\geq0$. The first is the quasi-isomorphism of Lemma \ref{lem:completed quasi} tensored with the identity. The second is the canonical map from the tensor product of the linear duals to the linear dual of the tensor product, which is a quasi-isomorphism on complexes with finite dimensional homology.
\end{proof}

\begin{corollary}\label{cor:sullivan chains}
Let $X$ be a topological space. There is a natural quasi-isomorphism \[A_*(X)\simeq C_*(X;\mathbb{Q})\] of differential graded coalgebras and of oplax monoidal functors.
\end{corollary}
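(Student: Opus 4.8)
The plan is to deduce the corollary from Theorem~\ref{thm:sullivan chains} by evaluating on the singular set and then tracking the monoidal structures through the resulting comparison. By definition $A_*(X)=A_*(\mathrm{Sing}(X))$, so Theorem~\ref{thm:sullivan chains}, applied to $K=\mathrm{Sing}(X)$, already supplies a natural zig-zag of quasi-isomorphisms $A_*(\mathrm{Sing}(X))\simeq N_*(\mathrm{Sing}(X))$ of differential graded coalgebras. First I would append to this zig-zag the standard normalization quasi-isomorphism $C_*(X;\mathbb{Q})=C_*(\mathrm{Sing}(X);\mathbb{Q})\to N_*(\mathrm{Sing}(X))$, the quotient by the degenerate subcomplex, which is well known to be a quasi-isomorphism of differential graded coalgebras for the Alexander--Whitney coalgebra structures. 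This already yields the asserted natural zig-zag of differential graded coalgebras.

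It remains to upgrade the zig-zag to one of \emph{oplax monoidal} functors $\Top\to\Ch_\mathbb{Q}$, where $\Top$ carries the Cartesian product. Since the singular set functor is strong symmetric monoidal for products, $\mathrm{Sing}(X\times Y)\cong\mathrm{Sing}(X)\times\mathrm{Sing}(Y)$, it suffices to work at the level of simplicial sets and to equip the functors $A_*$ and $N_*$ with (symmetric, resp.\ not necessarily symmetric) oplax structures for the Cartesian product of simplicial sets. The oplax structure on $N_*$ is the classical Alexander--Whitney map $N_*(K\times L)\to N_*(K)\otimes N_*(L)$. For $A_*$ I would use the identification $\widehat A^*\cong A_*^\vee$ of the construction: the completed cochains $\widehat A^*$ are the completed PL--de Rham forms, which carry a classical \emph{symmetric} lax monoidal structure $\widehat A^*(K)\otimes\widehat A^*(L)\to\widehat A^*(K\times L)$; pre-dualizing endows $A_*$ with a symmetric oplax structure $A_*(K\times L)\to A_*(K)\otimes A_*(L)$.

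With these structures fixed, the monoidal refinement of the zig-zag follows by reinspecting the proof of Theorem~\ref{thm:sullivan chains}. The two comparison maps there are induced by the (acyclic) counits $N_*(\Delta^\bullet)\to\mathbb{Q}$ and $A_*(\Delta^\bullet)\to\mathbb{Q}$, which are monoidal; dually, the diagram of functors valued in commutative differential graded algebras appearing in that proof, together with the quasi-isomorphism of Lemma~\ref{lem:completed quasi} (a map of algebras, hence compatible with units and products), is a diagram of lax monoidal functors and monoidal natural transformations. Transporting this through the pre-duality $\widehat A^*\cong A_*^\vee$ exhibits the coalgebra-side zig-zag as one of oplax monoidal functors and monoidal natural transformations; precomposing with the strong monoidal $\mathrm{Sing}$ and appending the Alexander--Whitney-compatible normalization map then completes the argument. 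The principal obstacle is bookkeeping rather than conceptual: one must verify monoidality of every arrow on the generating cosimplicial objects $A^*(\Delta^\bullet)$ and $V_\bullet$ and then propagate it through the left Kan extension along $\iota$, in particular reconciling the strictly symmetric oplax structure on $A_*$ with the non-symmetric Alexander--Whitney structure on $C_*$. This is permissible, since a monoidal natural transformation constrains only compatibility with the structure maps and not with the symmetries, but the compatibility must nonetheless be checked by hand.
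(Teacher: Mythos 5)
Your first step coincides with the paper's: restrict the zig-zag of Theorem \ref{thm:sullivan chains} along the singular set functor (your explicit insertion of the normalization quasi-isomorphism $C_*(\mathrm{Sing}(X);\mathbb{Q})\to N_*(\mathrm{Sing}(X))$ is fine, and slightly more careful than the paper, which leaves this identification implicit). Where you diverge is the oplax monoidal clause, and here you miss the observation that makes the paper's proof a one-liner: the oplax structures on \emph{both} functors are induced by their coalgebra structures, via
\[
F(X\times Y)\to F(X\times Y)\otimes F(X\times Y)\xrightarrow{F(\pi_1)\otimes F(\pi_2)} F(X)\otimes F(Y),
\]
which the paper recalls explicitly for $C_*$ and which holds tautologically for $A_*$, since $A_*$ is \emph{defined} as a coalgebra-valued functor. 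Once this is noted, any natural transformation of coalgebra-valued functors is automatically a monoidal natural transformation: it commutes with the comultiplications (being a coalgebra map) and with $F(\pi_1)\otimes F(\pi_2)$ (by naturality). No re-inspection of the proof of Theorem \ref{thm:sullivan chains}, no propagation of monoidality through the Kan extension, and no by-hand reconciliation of symmetric versus Alexander--Whitney structures is needed; the compatibility you say ``must nonetheless be checked by hand'' is formal.

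Beyond being laborious, your plan has one step that is a genuine gap as stated: defining the oplax structure on $A_*$ by ``pre-dualizing'' the lax structure on $\widehat A^*=A_*^\vee$. A lax structure map $A_*(K)^\vee\otimes A_*(L)^\vee\to A_*(K\times L)^\vee$ is not automatically the dual of any map $A_*(K\times L)\to A_*(K)\otimes A_*(L)$; for infinite-dimensional complexes this is a nontrivial continuity/smallness condition, so existence of the pre-dual must be proved, and one must further check that the resulting structure is the one the paper actually uses in its main construction (otherwise you prove the corollary for the wrong oplax structure). When one unwinds the classical lax structure on $\widehat A^*$ (multiplication composed with $\pi_1^*\otimes\pi_2^*$), the pre-dual does exist and is exactly the coalgebra-induced composite displayed above---but verifying this is precisely rediscovering the paper's observation, at which point the rest of your machinery becomes unnecessary.
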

\begin{proof}
Restricting the quasi-isomorphism of Theorem \ref{thm:sullivan chains} along the singular set functor, we obtain the desired quasi-isomorphism of coalgebras. To complete the proof, we need only recall that the coalgebra structure on singular chains determines the oplax monoidal structure via \[C_*(X\times Y)\to C_*(X\times Y)\otimes C_*(X\times Y)\xrightarrow{C_*(\pi_1)\otimes C_*(\pi_2)} C_*(X)\otimes C_*(Y).\]
\end{proof}

\begin{remark}\label{remark:strictly commutative}
One imagines that our approach in this appendix translates essentially unchanged to the setting of \cite{RichterSagave:SCMCAS}, providing a symmetric model for integral singular chains. We have not checked the details.
\end{remark}

\end{appendix}

\bibliographystyle{plain}
\bibliography{references}

\end{document}